\DeclareSymbolFont{cyrillic}{T2A}{cmr}{m}{n}
\DeclareMathSymbol{\D}{\mathalpha}{cyrillic}{196}
\DeclareMathOperator{\essup}{ess\,sup}
\DeclareMathOperator{\esinf}{ess\,inf}
\DeclareMathOperator{\var}{var}
\theoremstyle{plain}
\newtheorem{theorem}{Theorem}[section]
\newtheorem{lemma}{Lemma}[section]
\newtheorem{corollaryP}[theorem]{Corollary}
\newtheorem{propositionP}[theorem]{Proposition}
\newtheorem{proposition}[lemma]{Proposition}
\theoremstyle{definition}
\newtheorem{definition}[lemma]{Definition}
\newtheorem*{condition}{Condition}
\theoremstyle{remark}
\newtheorem{remark}[lemma]{Remark}
\def\namedlabel#1#2{\begingroup
   #2%
 \def\@currentlabel{#2}%
   \phantomsection\label{#1}\endgroup
}
\def\S{Section}
\def\ie{{\em i.e.,} }
\newfont\bbf{msbm10 at 12pt}
\def\eps{\varepsilon}
\def\R{{\mathbb R}}
\def\N{{\mathbb N}}
\def\B{{\mathcal B}}
\def\I{{\mathcal I}}
\DeclareMathOperator*{\essinf}{ess\;inf}
\def\R{\ensuremath{\mathbb R}}
\def\N{\ensuremath{\mathbb N}}
\def\I{\ensuremath{{\bf 1}}}
\def\e{\ensuremath{\text e}}
\def\S{\ensuremath{\mathcal S}}
\def\RR{\ensuremath{\mathcal R}}
\def\B{\ensuremath{\mathcal B}}
\def\M{\ensuremath{\mathcal M}}
\def\L{\ensuremath{\mathcal L}}
\def\p{\ensuremath{\mathbb P}}
\def\RR{\ensuremath{\mathcal R}}
\def\WW{\ensuremath{\mathscr W}}
\def\aa{\ensuremath{\mathscr A}}
\def\X{\mathcal{X}}
\def\Y{\mathcal{Y}}
\def\ie{{\em i.e.}, }
\def\E{\mathbb E}
\def\1{{\bf 1}}
\def\TF{\mathcal{T}}
\numberwithin{equation}{section}
\def\dist{\ensuremath{\mbox{dist}}}
\def\dist{\mbox{dist}}
\def\le{\leqslant}
\def\ge{\geqslant}
\begin{document}

\title[]{Point processes of non stationary sequences generated by sequential and random dynamical systems}

\author[A. C. M. Freitas]{Ana Cristina Moreira Freitas}
\address{Ana Cristina Moreira Freitas\\ Centro de Matem\'{a}tica \&
Faculdade de Economia da Universidade do Porto\\ Rua Dr. Roberto Frias \\
4200-464 Porto\\ Portugal} \email{\href{mailto:amoreira@fep.up.pt}{amoreira@fep.up.pt}}
\urladdr{\url{http://www.fep.up.pt/docentes/amoreira/}}

\author[J. M. Freitas]{Jorge Milhazes Freitas}
\address{Jorge Milhazes Freitas\\ Centro de Matem\'{a}tica \& Faculdade de Ci\^encias da Universidade do Porto\\ Rua do
Campo Alegre 687\\ 4169-007 Porto\\ Portugal}
\email{\href{mailto:jmfreita@fc.up.pt}{jmfreita@fc.up.pt}}
\urladdr{\url{http://www.fc.up.pt/pessoas/jmfreita/}}

\author[M. Magalh\~aes]{M\'ario Magalh\~aes}
\address{M\'ario Magalh\~aes\\ Centro de Matem\'{a}tica da Universidade do Porto\\ Rua do
Campo Alegre 687\\ 4169-007 Porto\\ Portugal} \email{\href{mailto:mdmagalhaes@fc.up.pt}{mdmagalhaes@fc.up.pt}}

\author[S. Vaienti]{Sandro Vaienti}
\address{Sandro Vaienti\\ Aix Marseille Universit\'e, Universit\'e de Toulon, CNRS, CPT, 13009 Marseille, France. }
\email{vaienti@cpt.univ-mrs.fr}
\urladdr{\url{http://www.cpt.univ-mrs.fr/~vaienti/}}

\thanks{MM was partially supported  by FCT grant SFRH/BPD/89474/2012, which is supported by the program POPH/FSE. ACMF and JMF were partially supported by FCT projects FAPESP/19805/2014 and PTDC/MAT-CAL/3884/2014, PTDC/MAT-PUR/28177/2017, with national funds. All authors would like to thank the support of  CMUP (UID/MAT/00144/2013), which is funded by FCT with national (MCTES) and European structural funds through the programs FEDER, under the partnership agreement PT2020. JMF would like to thank the University of Toulon for the appointment as ``Visiting Professor'' during the year 2018.}

\begin{abstract}
We give general sufficient conditions to prove the convergence of marked point processes that keep record of the occurrence of rare events and of their impact for non-autonomous dynamical systems. We apply the results to sequential dynamical systems associated to uniformly expanding maps and to random dynamical systems given by fibred Lasota Yorke maps. 
\end{abstract}

\subjclass[2010]{37A50, 60G70, 60G57, 37B20.}

\maketitle

\section{Introduction}\label{sec:introduction}

The complexity of the orbital structure of chaotic systems brought special attention to the study of limiting laws of stochastic processes arising from such systems, since they borrow at least some probabilistic predictability to their erratic behaviour. 

The first step in this research direction is usually the construction of invariant physical measures, which provide an asymptotic spatial distribution of the orbits in the phase space and endow the stochastic processes dynamically generated with stationarity. Ergodicity then gives strong laws of large numbers. The mixing properties of the system restore asymptotic independence and, in this way, allow to mimic iid processes and prove limiting laws for the mean, such as: central limit theorems, large deviation principles, invariance principles, among others. However, in many occasions the exact formula for the invariant measure is not available and one has to rely on reference measures with respect to which these processes are not stationary anymore. Loosening stationarity leads to non-autonomous dynamical systems for which the study of limit theorems is just at the beginning. We mention the recent works \cite{AHNTV15, HNTV17,NTV18} and references therein.

While the limiting laws mentioned so far pertain to the mean or average behaviour of the system, in the recent years, the study of the extremal behaviour, ie, the laws that rule the appearance of abnormal observations along the orbits of the system has suffered an unprecedented development (\cite{LFFF16}). This study is deeply connected with the recurrence properties to certain regions of the phase space and was initially performed under stationarity. Very recently, in \cite{FFV17,FFV18}, the authors developed tools to obtain the limiting distribution for the partial maxima of non-stationary stochastic processes arising from sequential dynamical systems (\cite{BB84,CR07}) and random transformations or randomly perturbed systems (\cite{K86,K88}). In the case of random transformations, we also mention the papers \cite{RSV14,R14,RT15}, where limiting laws for the waiting time to hit/return to shrinking target sets in the phase space (which are related to the existence of limiting laws for the maximum \cite{C01,FFT10,FFT11}) were obtained for random dynamical systems. 

The main purpose of this paper is to enhance the study of rare events for non-autonomous systems and, therefore, in a non-stationary context, by considering the convergence of point processes instead of the more particular distributional limiting properties of the maximum or the hitting/return times statistics. Point processes have revealed as a powerful tool to study the extremal behaviour of stationary systems. The most simple point processes, the Rare Events Point Processes (REPP) keep track of the number of exceedances (abnormally high values) observed along the orbits of the system and allow to recover relevant information such as the expected time between the occurrence of extremal events, the intensity of clustering,  the distribution of the higher order statistics such as the maximum. For stationary systems, they were studied in \cite{FFT13}. We will also consider more sophisticated Marked Point Processes of Rare Events (which are random measures), studied for autonomous systems in \cite{FFM18} and which not only keep track of the number of exceedances but also of their impact. In the presence of clustering of rare events, we will be particularly interested in Area Over Threshold (AOT) marked point processes, which sum all the excesses over a certain threshold within a cluster, and Peak Over Threshold (POT) marked point point processes, which consider the record impact of the highest exceedance by taking the maximum excess within a cluster. The first allows to study the effect of aggregate damage, while the second focuses on the sensitivity to very high impacts. The potential of interest of these results is quite transversal, but we mention particularly the possible applications to climate dynamics where the study of extreme events for dynamical systems have proved to be very useful in the analysis of meteorological  data (see for example \cite{SKFG16,MCF17,MCBF18,FAMR19}).  

The paper is structured as follows. In Section~\ref{sec:setting}, we generalise the theory developed in \cite{FFV17} in order to obtain the convergence of Marked Point Processes of Rare Events (MREPP). In particular, we introduce the notation, concepts and conditions that allow us to state a result that establishes the convergence of the MREPP to a compound Poisson process for non-stationary stochastic processes, under some amenable conditions designed for application to non-autonomous systems. We believe that formula \eqref{eq:multiplicity} which gives the multiplicity distribution of the limiting compound Poisson process has an interest on its own. Section~\ref{sec:proof-convergence} is dedicated to the proof of the main convergence result stated in the previous section. In Section~\ref{sec:sequential}, we make a non-trivial application of our main convergence result to some sequential dynamical systems studied in \cite{CR07}, deriving exact formulas for the limiting multiplicity distribution. In Section~\ref{sec:random}, we establish a convergence limiting result of the MREPP  in the random dynamical systems setting, where we consider fibred LasotaYorke maps which were introduced in the recent paper  \cite{DFGV18}.

\section{The setting and statement of results}
\label{sec:setting}

Let $X_0, X_1, \ldots$ be a stochastic process\index{random/stochastic process}, where each r.v. $X_i:\Y\to\R\cup\{\pm\infty\}$ is defined on the measure space $(\Y,\mathcal B,\p)$. We assume that $\Y$ is a sequence space with a natural product structure so that each possible realisation of the stochastic process\index{random/stochastic process} corresponds to a unique element of $\Y$ and there exists a measurable map $\TF:\Y\to\Y$, the time evolution map, which can be seen as the passage of one unit of time, so that
\[
X_{i-1}\circ\TF=X_{i}, \quad \mbox{for all $i\in\N$}.
\]
The $\sigma$-algebra $\mathcal B$ can also be seen as a product $\sigma$-algebra adapted to the $X_i$'s.
For the purpose of this paper, $X_0, X_1,\ldots$ is possibly non-stationary. Stationarity would mean that $\p$ is $\TF$-invariant. Note that $X_i=X_0\circ\TF_i$, for all $i\in\N_0$, where $\TF_i$ denotes the $i$-fold composition of $\TF$, with the convention that $\TF_0$ denotes the identity map on $\Y$. In the applications below to sequential dynamical systems, we will have that $\TF_i=T_i\circ\ldots\circ T_1$ will be the concatenation of $i$ possibly different transformations $T_1,\ldots,T_i$, so that
\begin{equation}
\label{eq:def-stat-stoch-proc-DS}
X_n=\varphi\circ \TF_n,\quad \mbox{for all } n\in {\mathbb N}
\end{equation}
for some given observable $\varphi:\Y\to\R\cup\{\pm\infty\}$

Each random variable $X_i$ has a marginal distribution function (d.f.) denoted by $F_i$, \ie $F_i(x)=\p(X_i\leq x)$. Note that the $F_i$, with $i\in\N_0$, may all be distinct from each other. For a d.f. $F$ we let $\bar F=1-F$. We define $u_{F_i}=\sup\{x:F_i(x)<1\}$ and let $F_i(u_{F_i}-):=\lim_{h\to 0^+} F_i(u_{F_i}-h)=1$ for all $i$. We will consider the limiting law of
$$\mathbf{P}_{H,n}:=\p(X_0\leq u_{n,0},X_1\leq u_{n,1},\ldots,X_{Hn-1}\leq u_{n,Hn-1})$$
as $n\to\infty$, where $\{u_{n,i},i\leq Hn-1,n\geq 1\}$ is considered a real-valued boundary, with $H\in\N$.

We assume throughout the paper that
\begin{equation}
\bar F_{n,\max}(H):=\max\{\bar F_i(u_{n,i}), i\leq Hn-1\}\to 0, \quad \mbox{ as }n\to\infty,
\label{FmaxH}
\end{equation}

and, for some $\tau>0$,
\begin{equation}
\label{F_hn}
\sum_{i=0}^{h_n-1}\bar F_i(u_{n,i})=\frac{h_n}{n}\tau+o(1),
\end{equation}
for any unbounded increasing sequence of positive integers $h_n\leq Hn$. In particular, we have
\begin{equation}
F_{H,n}^*:=\sum_{i=0}^{Hn-1}\bar F_i(u_{n,i})\to H\tau, \quad \mbox{ as }n\to\infty.
\end{equation}

The most simple point processes that we will consider here keep track of the exceedances of the high thresholds $u_{n,i}$ by counting the number of such exceedances on a rescaled time interval. These thresholds are chosen such that
\begin{equation}
F^*_{1,n}=\sum_{i=0}^{n-1}\bar F_i(u_{n,i})\to\tau,\;\quad \mbox{ as }n\to\infty,
\label{F_n}
\end{equation}
so that the average number of exceedances among the first $n$ observations is kept, approximately, at the constant frequency $\tau>0$.

\subsection{Random measures and weak convergence}

We start by introducing the notions of \emph{random measures} and, in particular, \emph{point processes} and \emph{marked point processes}. One could introduce these concepts on general locally compact topological spaces with countable basis, but we will restrict to the case of the positive real line $[0,\infty)$ equipped with its Borel $\sigma$-algebra $\B_{[0,\infty)}$, where our applications lie. Consider a positive measure $\nu$ on $\B_{[0,\infty)}$. We say that $\nu$ is a Radon measure if $\nu(A)<\infty$, for every bounded set $A\in\B_{[0,\infty)}$. Let $\M:=\M([0,\infty))$ denote the space of all Radon measures defined on $([0,\infty),\B_{[0,\infty)})$. We equip this space with the vague topology, i.e., $\nu_n\to \nu$ in $\M([0,\infty))$ whenever $\int\psi\,d\nu_n\to \int\psi\,d\nu$ for every continuous function $\psi:[0,\infty)\to \R$ with compact support. Consider the subsets of $\M$ defined by $\M_p:=\{\nu\in\M: \nu(A)\in\N \mbox{ for all $A\in\B_{[0,\infty)}$}\}$ and $\M_a:=\{\nu\in\M: \nu \mbox{ is an atomic measure}\}$. A \emph{random measure} $M$ on $[0,\infty)$ is a random element of $\M$, \ie let $(\X,\B_\X, \p)$ be a probability space, then any measurable $M:\X\to \M$ is a random measure on $[0,\infty)$. A \emph{point process} $N$ and \emph{marked point process} $A$ are defined similarly as random elements on $\M_p$ and $\M_a$, respectively.

A point measure $\nu$ of $\M_p$ can be written as
$\nu=\sum_{i=1}^\infty \delta_{x_i}$,
where $x_1, x_2, \ldots$ is a collection of not necessarily distinct points in $[0,\infty)$ and $\delta_{x_i}$ is the Dirac measure at $x_i$, \ie for every $A\in\B_{[0,\infty)}$, we have that $\delta_{x_i}(A)=1$ if $x_i\in A$ and $\delta_{x_i}(A)=0$, otherwise. The elements $\nu$ of $\M_a$ can be written as
$\nu=\sum_{i=1}^\infty d_i \delta_{x_i}$,
where $x_1, x_2, \ldots\in [0,\infty)$ and $d_1, d_2,\ldots\in[0,\infty)$.

A concrete example of a marked point process, which in particular will appear as the limit of the marked point processes, is the following:
\begin{definition}
\label{def:compound-poisson-process}
Let $T_1, T_2,\ldots$ be an i.i.d. sequence of r.v. with common exponential distribution of mean $1/\theta$. Let $D_1, D_2, \ldots$ be another i.i.d. sequence of r.v., independent of the previous one, and with d.f. $\pi$. Given these sequences, for $J\in\B_{[0,\infty)}$, set
$$
A(J)=\int \I_J\;d\left(\sum_{i=1}^\infty D_i \delta_{T_1+\ldots+T_i}\right).
$$
Let $\X$ denote the space of all possible realisations of $T_1, T_2,\ldots$ and $D_1, D_2, \ldots$, equipped with a product $\sigma$-algebra and measure, then $A:\X\to \M_a([0,\infty))$ is a marked point process which we call a compound Poisson process of intensity $\theta$ and multiplicity d.f. $\pi$.
\end{definition}
\begin{remark}
\label{rem:poisson-process}
When $D_1, D_2, \ldots$ are integer valued positive random variables, $\pi$ is completely defined by the values $\pi_k=\p(D_1=k)$, for every $k\in\N_0$ and $A$ is actually a point process. If $\pi_1=1$ and $\theta=1$, then $A$ is the standard Poisson process and, for every $t>0$, the random variable $A([0,t))$ has a Poisson distribution of mean $t$.
\end{remark}

Now, we will give a definition of convergence of random measures (for more details, see \cite{K86}).
\begin{definition}
\label{def:convergence-point-processes}
Let $(M_n)_{n\in\N}:\X\to\M$ be a sequence of random measures defined on a probability space $(\X,\mathcal B_\X, \mu)$ and let $M:Y \to \M$ be another random measure defined on a possibly distinct probability space $(Y,\mathcal B_Y, \nu)$. We say that $M_n$ converges weakly to $M$ if, for every bounded continuous function $\varphi$ defined on $\M$, we have 
$$\lim_{n\to\infty}\int \varphi d \mu\circ M_n^{-1}=\int \varphi d \nu\circ M^{-1}.$$ 
We write $M_n \stackrel{\mu}{\Longrightarrow} M $.
\end{definition}

Checking the convergence of random measures using the definition is often quite hard, hence, it is useful to translate it into convergence in distribution of more tractable random variables or in terms of Laplace transforms. For that purpose, we let $\S$ denote the semi-ring of subsets of $\R_0^+$ whose elements
are intervals of the type $[a,b)$, for $a,b\in\R_0^+$. Let $\RR$
denote the ring generated by $\S$. Recall that for every $J\in\RR$
there are $\varsigma\in\N$ and $\varsigma$ disjoint intervals $I_1,\ldots,I_\varsigma\in\S$ such that
$J=\dot\cup_{i=1}^\varsigma I_j$. In order to fix notation, let
$a_j,b_j\in\R_0^+$ be such that $I_j=[a_j,b_j)\in\S$.

\begin{definition}
\label{def:Laplace-rv}
Let $Z$ be a non-negative, random variable with distribution $F$. For every $y\in\R_0^+$, the \emph{Laplace transform} $\phi(y)$ of the distribution $F$ is given by
\[
\phi(y):=\E\left(\e^{-yZ}\right)=\int \e^{-yZ} d\mu_F,
\]
where $\mu_F$ is the Lebesgue-Stieltjes probability measure associated to the distribution function $F$.
\end{definition}

\begin{definition}
\label{def:Laplace-point-process}
For a random measure $M$ on $\R_0^+$ and $\varsigma$ disjoint intervals $I_1, I_2,\ldots, I_\varsigma\in\S$ and non-negative $y_1, y_2,\ldots,y_\varsigma$, we define the \emph{joint Laplace transform} $\psi(y_1, y_2,\ldots,y_\varsigma)$ by
\[
\psi_M(y_1, y_2,\ldots,y_\varsigma)=\E\left(\e^{-\sum_{\ell=1}^\varsigma y_\ell M(I_\ell)}\right).
\]
\end{definition}

If $M$ is a compound Poisson point process with intensity $\lambda$ and multiplicity distribution $\pi$, then given $\varsigma$ disjoint intervals $I_1, I_2,\ldots, I_\varsigma\in\S$ and non-negative $y_1, y_2,\ldots,y_\varsigma$ we have:
\[
\psi_M(y_1, y_2,\ldots,y_\varsigma)=\e^{-\lambda\sum_{\ell=1}^\varsigma (1-\phi(y_\ell))|I_\ell|},
\]
where $\phi(y)$ is the Laplace transform of the multiplicity distribution $\pi$.

\begin{remark}
\label{rem:convergence-point-processes}
By \cite[Theorem~4.2]{K86}, the sequence of random measures $(M_n)_{n\in\N}$ converges weakly to the random measure $M$ iff the sequence of vector r.v. $(M_n(J_1), \ldots, M_n(J_\varsigma))$ converges in distribution to $(M(J_1), \ldots, M(J_\varsigma))$, for every $\varsigma\in\N$ and all disjoint $J_1,\ldots, J_\varsigma\in\S$ such that $M(\partial J_\ell)=0$ a.s., for $\ell=1,\ldots,\varsigma$, which will be the case if the respective joint Laplace transforms $\psi_{M_n}(y_1, y_2,\ldots,y_\varsigma)$ converge to the joint Laplace transform $\psi_M(y_1, y_2,\ldots,y_\varsigma)$, for all $y_1,\ldots, y_\varsigma\in[0,\infty)$.
\end{remark}

\subsection{Marked Point Processes of Rare Events}

Before we give the formal definition of Marked Point Processes of Rare Events, we need to introduce some notation and definitions that will also be useful to understand the conditions that we will introduce in order to prove their weak convergence.

Let $A\in\B$. We define a function that we refer to as \emph{first hitting time function} to $A$, denoted by $r_A:\X\to\N\cup\{+\infty\}$ where
\begin{equation}
\label{eq:hitting-time}
r_A(x)=\min\left\{j\in\N\cup\{+\infty\}:\; f^j(x)\in A\right\}.
\end{equation}
The restriction of $r_A$ to $A$ is called the \emph{first return time function} to $A$. We define the \emph{first return time} to $A$, which we denote by $R(A)$, as the essential infimum of the return time function to $A$,
\begin{equation}
\label{eq:first-return}
R(A)=\essinf_{x\in A} r_A(x).
\end{equation}

In what follows, for every $A\in\mathcal B$, we denote the complement of $A$ as $A^c:=\mathcal Y\setminus A$.

Let $\mathbb A:=(A_0,A_1,\ldots)$ be a sequence of events such that $A_i\in \TF_i^{-1}\mathcal B$. For some $s,\ell\in \N_0$, we define
\begin{equation}
\label{eq:W-def}
\mathscr W_{s,\ell}(\mathbb A)=\bigcap_{i=s}^{s+\ell-1} A_i^c,
\end{equation}
which forbids the occurrence of $A_i$ during the time interval between $s$ and $s+\ell-1$.

Given a set of thresholds $u_{n,i}$, for each $n$, $i$ and $j\in\N_0$ with $j<Hn-i$, we set
\begin{align*}
&U^{(0)}_{j,n,i}:=\{X_i>u_{n,i}\}\\
&Q^{(0)}_{j,n,i}:=U^{(0)}_{j,n,i}\cap\bigcap_{\ell=1}^j(U^{(0)}_{j,n,i+\ell})^c=\{X_i>u_{n,i},X_{i+1}\leq u_{n,i+1},...,X_{i+j}\leq u_{n,i+j}\}
\end{align*}

and define the following events, for $\kappa\in\N$:
\begin{align*}
&U^{(\kappa)}_{j,n,i}:=U^{(\kappa-1)}_{j,n,i}\setminus Q^{(\kappa-1)}_{j,n,i}=U^{(\kappa-1)}_{j,n,i}\cap\bigcup_{\ell=1}^j U^{(\kappa-1)}_{j,n,i+\ell}\\
&Q^{(\kappa)}_{j,n,i}:=U^{(\kappa)}_{j,n,i}\cap\bigcap_{\ell=1}^j(U^{(\kappa)}_{j,n,i+\ell})^c.
\end{align*}

If $j=0$ then $Q^{(0)}_{0,n,i}=U^{(0)}_{0,n,i}=\{X_i>u_{n,i}\}$ and $Q^{(\kappa)}_{0,n,i}=U^{(\kappa)}_{0,n,i}=\emptyset$ for $\kappa\in\N$.

For $j\geq Hn-i$, we set $Q^{(\kappa)}_{j,n,i}=U^{(\kappa)}_{j,n,i}=\emptyset$ for all $\kappa\in\N_0$.

Also, let $U^{(\infty)}_{j,n,i}:=\bigcap_{\kappa=0}^\infty U^{(\kappa)}_{j,n,i}$. Note that $Q^{(\kappa)}_{j,n,i}=U^{(\kappa)}_{j,n,i}\setminus U^{(\kappa+1)}_{j,n,i}$ for $\kappa\in\N_0$ and, therefore,
\[U^{(0)}_{j,n,i}=\bigcup_{\kappa=0}^{\infty}Q^{(\kappa)}_{j,n,i}\cup U^{(\infty)}_{j,n,i}.\]

\begin{remark}
\label{sizeofcluster}
The points in $U^{(\kappa)}_{j,n,i}$ are points whose orbit represents a cluster of size at least $\kappa+1$, since there will be points in each $U^{(\kappa')}_{j,n,i}$ with $k'$ taking values between $\kappa$ and $0$. On the other hand, points in $Q^{(\kappa)}_{j,n,i}$ are points whose orbit represents a cluster of size $\kappa+1$ exactly.
\end{remark}

For each $i\in\N_0$ and $n\in\N$, let $R_{j,n,i}:=\min\{r\in\N: Q_{j,n,i}^{(0)}\cap Q_{j,n,i+r}^{(0)}\neq\emptyset\}$.
We assume that there exists $q\in\N_0$ such that:
\begin{equation}
\label{def:q}
q=\min\left\{j\in\N_0:\lim_{n\to\infty} \min_{i\leq n}\left\{R_{j,n,i}\right\}=\infty\right\}.
\end{equation}

When $q=0$ then $Q_{0,n,i}^{(0)}$ corresponds to an exceedance of the threshold $u_{n,i}$ and we expect no clustering of exceedances.

When $q>0$, heuristically one can think that there exists an underlying periodic phenomenon creating short recurrence, \ie clustering of exceedances, when exceedances occur separated by at most $q$ units of time then they belong to the same cluster. Hence, the sets $Q_{q,n,i}^{(0)}$ correspond to the occurrence of exceedances that escape the periodic phenomenon and are not followed by another exceedance in the same cluster. We will refer to the occurrence of $Q_{q,n,i}^{(0)}$ as the occurrence of an escape at time $i$, whenever $q>0$.

Given an interval $I\in\S$, $x\in\X$ and $u_{n,i}\in\R$, we define
\[N_{n,I}(x):=\sum_{i\in I\cap\N_0} \I_{Q^{(0)}_{q,n,i}}(x).\]
Let $i_1(x)<i_2(x)<\ldots<i_{N_{n,I}(x)}(x)$ denote the times at which the orbit of $x$ entered $Q^{(0)}_{q,n,i}$. We now define the cluster periods: for every $k=1,\ldots,N_{n,I}(x)-1$ let $I_k(x)=[i_k(x),i_{k+1}(x))$ and set $I_0(x)=[\min I,i_1(x))$ and $I_{N_{n,I}(x)}(x)=[i_{N_{n,I}(x)}(x),\sup I)$.

In order to define the marks for each cluster we consider the following mark functions that depend on the levels $u_{n,i}$ and on the random variables in a certain time frame $I\in\S$:
\begin{equation}
\label{eq:mark-type}m_n(I):=\begin{cases}
\sum_{i\in I\cap\N_0}(X_i-u_{n,i})_+ & \text{AOT case}\\
\max_{i\in I\cap\N_0}\{(X_i-u_{n,i})_+\} & \text{POT case}\\
\sum_{i\in I\cap\N_0}\I_{X_i>u_{n,i}} & \text{REPP case},
\end{cases}
\end{equation}
where $(y)_+=\max\{y,0\}$ and
when $I\cap\N_0\neq\emptyset$. Also set $m_n(I):=0$ when $I\cap\N_0=\emptyset$.

Finally, we set
\[\aa_n(I)(x):=\sum_{k=0}^{N_{n,I}(x)} m_n(I_k(x)).\]

In order to avoid degeneracy problems in the definition of the marked point processes we need to rescale time by the factor
\[v_n:=n/F^*_{1,n}\]
so that the expected average number of exceedances of the levels $u_{n,i}$ for $i=0,\ldots,n$ in each time frame considered is kept `constant' as $n\to\infty$. Recall that the levels $u_{n,i}$ satisfy \ref{F_n}, and therefore $v_n\sim\frac{n}{\tau}$, where we use the notation $A(n)\sim B(n)$, when $\lim_{n\to \infty} \frac{A(n)}{B(n)}=1$. Hence, we introduce the following notation. For $I=[a,b)\in\S$ and $\alpha\in \R$, we denote $\alpha I:=[\alpha a,\alpha b)$ and $I+\alpha:=[a+\alpha,b+\alpha)$. Similarly, for $J\in\RR$, such that $J=J_1\dot\cup\ldots\dot\cup J_k$, define $\alpha J:=\alpha J_1\dot\cup\cdots\dot\cup \alpha J_k$ and $J+\alpha:=(J_1+\alpha)\dot\cup\cdots\dot\cup (J_k+\alpha)$.

\begin{definition}
\label{def:MREPP}
We define the \emph{marked rare event point process} (MREPP) by setting for every $J\in\RR$, with $J=J_1\dot\cup\ldots\dot\cup J_k$, where $J_i\in \S$ for all $i=1,\ldots, k$,
\begin{equation}
\label{eq:def-MREPP}
A_n(J):=\sum_{i=1}^k\aa_n(v_n J_i).
\end{equation}
\end{definition}

When $m_n$ given in \eqref{eq:mark-type} is as in the AOT case, then the MREPP $A$ computes the sum of all excesses over the threshold $u_n$ and, in such case, we will refer to $A$ as being an \emph{area over threshold} or AOT MREPP. Observe that in this case we may write:
\[A_n(J)=\sum_{i\in v_n J\cap\N_0} (X_i-u_{n,i})_+.\]

When $m_n$ given in \eqref{eq:mark-type} is as in the POT case, then the MREPP $A_n$ computes the sum of the largest excess (peak) over the threshold $u_{n,i}$ within each cluster and, in such case, we will refer to $A_n$ as being a \emph{peaks over threshold} or POT MREPP.

When $m_n$ given in \eqref{eq:mark-type} is as in the REPP case, then the MREPP $A_n$ is actually a point process that counts the number of exceedances of $u_{n,i}$ and, in such case, we will refer to $A_n$ as being a \emph{rare events point process} or REPP, as it was referred in \cite{FFT13}. Observe that in this case we have:
\[A_n(J)=\sum_{i\in v_n J\cap\N_0} \I_{X_i>u_{n,i}}.\]

If $q=0$ then the AOT MREPP and the POT MREPP coincide and both compute the sum of all excesses over the threshold $u_{n,i}$. In such situation we say that $A_n$ is an \emph{excesses over threshold} (EOT) MREPP.

Next, we will introduce the dependence conditions $\D_q(u_{n,i})^*$ and $\D'_q(u_{n,i})^*$, which are the analogous of conditions $\D_p(u_n)$ and $\D'_p(u_n)$ considered in \cite{FFT15}, but designed to establish the convergence of MREPP (AOT, POT or REPP), which allow us to state our main result. Before we do that, we need to introduce some additional notation and definitions.

For $x\geq 0$ and $\kappa\in\N_0$, we define the following events:
\begin{align*}
R^{(\kappa)}_{n,i}(x)&:=Q^{(\kappa)}_{q,n,i}\cap\left\{m_n(I_{\kappa})>x\right\}\\
B_{n,i}(x)&:=\bigcup_{\kappa=0}^{\infty}R^{(\kappa)}_{n,i}(x)\cup U^{(\infty)}_{q,n,i}\\
A_{n,i}(x)&:=B_{n,i}(x)\cap\bigcap_{\ell=1}^q (B_{n,i+\ell}(x))^c
\end{align*}
where $I_{\kappa}=[i,i_{\kappa,\kappa}+1)$ and $i_{\kappa,j}$ denotes the times at which the orbit of the considered point entered $Q^{(\kappa-j)}_{q,n,i}$, with $i=i_{\kappa,0}<i_{\kappa,1}<i_{\kappa,2}<\ldots<i_{\kappa,j}<\ldots<i_{\kappa,\kappa}$ (see Remark~\ref{sizeofcluster}).

In particular, for $x=0$ we have
\begin{align*}
R^{(\kappa)}_{n,i}(0)&=Q^{(\kappa)}_{q,n,i}\\
B_{n,i}(0)&=\bigcup_{\kappa=0}^{\infty}Q^{(\kappa)}_{q,n,i}\cup U^{(\infty)}_{q,n,i}=U^{(0)}_{q,n,i}\\
A_{n,i}(0)&=U^{(0)}_{q,n,i}\cap\bigcap_{\ell=1}^q (U^{(0)}_{q,n,i+\ell})^c=Q^{(0)}_{q,n,i}
\end{align*}
and, if $q=0$,
\begin{align*}
R^{(0)}_{n,i}(x)&=\{X_i>u_{n,i},m_n([i,i+1))>x\} \mbox{, } \quad R^{(\kappa)}_{n,i}(x)=\emptyset \mbox{ for }\kappa\in\N\\
A_{n,i}(x)&=B_{n,i}(x)=R^{(0)}_{n,i}(x).\\
\end{align*}

\begin{condition}[$\D_q(u_{n,i})^*$]\label{cond:Dq*}
We say that $\D_q(u_{n,i})^*$ holds for the sequence $X_0,X_1,X_2,\ldots$ if for $t,n\in\N$, $i=0,\ldots,Hn-1$, for $x_1,\ldots,x_{\varsigma}\geq 0$ and any $J=\cup_{i=2}^\varsigma I_j\in \mathcal R$ with $\inf\{x:x\in J\}\ge i+t$,
\[\left|\p\left(A_{n,i}(x_1)\cap \bigcap_{j=2}^\varsigma\{\aa_n(I_j)\leq x_j\}\right)-\p\left(A_{n,i}(x_1)\right)\p\left(\bigcap_{j=2}^\varsigma\{\aa_n(I_j)\leq x_j\}\right)\right|\leq \gamma_i(n,t),\]
where for each $n$ and each $i$ we have that $\gamma_i(n,t)$ is nonincreasing in $t$ and there exists a sequence $t_n^*=o(n)$ such that $t_n^*\bar F_{n,\max}(H)\to 0$ and $n\gamma_i(n,t_n^*)\to 0$ when $n\rightarrow\infty$.
\end{condition}

Note that the main advantage of this mixing condition when compared with condition $\Delta(u_n)$ used by Leadbetter in \cite{L91} or any other similar such condition available in the literature is that it follows easily from sufficiently fast decay of correlations and therefore is particularly useful when applied to stochastic processes arising from dynamical systems.

For $q\in\N_0$ given by \eqref{def:q}, consider the sequence $(t_n^*)_{n\in\N}$, given by condition $\D_q(u_{n,i})^*$ and let $(k_n)_{n\in\N}$ be another sequence of integers such that
\begin{equation}
\label{eq:kn-sequence}
k_n\to\infty\quad \mbox{and}\quad k_n t_n^* \bar F_{n,\max}(H)\to 0
\end{equation}
as $n\rightarrow\infty$ for every $H\in\N$.

Let us give a brief description of the blocking argument and postpone the precise construction of the blocks to Section~\ref{subsec:blocks}. We split the data into $k_n$ blocks separated by time gaps of size larger than $t_n^*$, where we simply disregard the observations in the corresponding time frame. In the stationary case, the blocks have the same size and the expected number of exceedances within each block is $\sim \tau/k_n$. Here, the blocks may have different sizes, denoted by $\ell_{H,n,1}, \ldots, \ell_{H,n,k_n}$, but, as in \cite{FFV17}, these are chosen so that the expected number of exceedances is again $\sim\tau/k_n$. Also, for $i=1,\ldots,k_n$, let $\L_{H,n,i}=\sum_{j=1}^i\ell_{H,n,j}$ and $\L_{H,n,0}=0$.

Note that gaps need to be big enough so that they are larger than $t_n^*$ but they also need to be  sufficiently small so that the information disregarded does not compromise the computations. This is achieved by choosing the number of blocks, which correspond to the sequence $(k_n)_{n\in\N}$, diverging but slowly enough so that the weight of the gaps is negligible when compared to that of the true blocks.

In order to guarantee the existence of a distributional limit one needs to impose some restrictions on the speed of recurrence.

\begin{condition}[$\D'_q(u_{n,i})^*$]\label{cond:D'q}
We say that $\D'_q(u_{n,i})^*$ holds for the sequence $X_0,X_1,X_2,\ldots$ if there exists a sequence $(k_n)_{n\in\N}$ satisfying \eqref{eq:kn-sequence} and such that, for every $H\in\N$,
\begin{equation}
\label{eq:D'rho-un}
\lim_{n\rightarrow\infty}\sum_{i=1}^{k_n} \sum_{j=\L_{H,n,i-1}}^{\L_{H,n,i}-1} \sum_{r>j}^{\L_{H,n,i}-1}\p\left(Q^{(0)}_{q,n,j}\cap\{X_r>u_{n,r}\}\right)=0
\end{equation}
\begin{equation}
\mbox{and}\quad\lim_{n\rightarrow\infty}\sum_{j=\L_{H,n,k_n}}^{Hn-1} \sum_{r>j}^{Hn-1}\p\left(Q^{(0)}_{q,n,j}\cap\{X_r>u_{n,r}\}\right)=0
\end{equation}
\end{condition}

Condition $\D'_q(u_{n,i})^*$ precludes the occurrence of clustering of escapes (or exceedances, when $q=0$).

\begin{remark}
Note that condition $\D'_q(u_{n,i})^*$ is an adjustment of a similar condition $\D'_p(u_{n})$ in \cite{FFT15} in the stationary setting, which is similar to condition $D^{(p+1)}(u_n)$ in the formulation of \cite[Equation (1.2)]{CHM91}, although slightly weaker.
\end{remark}

When $q=0$, observe that $\D'_q(u_{n,i})^*$ is very similar to $D'(u_{n,i})$ from H\"usler, which prevents clustering of exceedances, just as $D'(u_n)$ introduced by Leadbetter did in the stationary setting.

When $q>0$, we have clustering of exceedances, \ie the exceedances have a tendency to appear aggregated in groups (called clusters). One of the main ideas in \cite{FFT12} that we use here is that the events $Q_{q,n,i}^{(0)}$ play a key role in determining the limiting EVL and in identifying the clusters. In fact, when $\D'_q(u_{n,i})^*$ holds we have that every cluster ends with an entrance in $Q_{q,n,i}^{(0)}$, which means that the inter cluster exceedances must appear separated at most by $q$ units of time.

In this approach, condition $\D'_q(u_{n,i})^*$ plays a prominent role. In particular, note that if condition $\D'_q(u_{n,i})^*$ holds for some particular $q=q_0\in\N_0$, then it holds for all $q\geq q_0$, and so \eqref{def:q} is indeed the natural candidate to try to show the validity of $\D'_q(u_{n,i})^*$.

Now, we give a way of defining the Extremal Index (EI) using the sets $Q_{q,n,i}^{(0)}$. For $q\in\N_0$ given by \eqref{def:q}, we also assume that there exists $0\leq\theta\leq1$, which will be referred to as the EI, such that
\begin{equation}
\label{eq:EIH}
\lim_{n\to\infty}\max_{i=1,\ldots,k_n}\left\{\left|\theta\sum_{j=\L_{H,n,i-1}}^{\L_{H,n,i}-1}\bar F(u_{n,j})-\sum_{j=\L_{H,n,i-1}}^{\L_{H,n,i}-1}\p\left(Q_{q,n,j}^{(0)}\right)\right|\right\}=0.
\end{equation}
Moreover, we assume the existence of normalising factors $a_{n,j}$ for every $j=0,1,\ldots,Hn-1$ and $n\in\N$, and a probability distribution $\pi$ such that, for every $H\in\N$ and $x\geq 0$,
\begin{equation}
\label{eq:multiplicity}
\lim_{n\to\infty}\max_{j=0,1,\ldots,Hn-1}\left\{\left|\frac{\p(A_{n,j}(x/a_{n,j}))}{\p\left(Q^{(0)}_{q,n,j}\right)}-(1-\pi(x))\right|\right\}=0
\end{equation}
and in this way obtain a formula to compute the multiplicity distribution of the limiting compound Poisson process.

Finally, assuming that both $\D_q(u_{n,i})^*$ and $\D'_q(u_{n,i})^*$ hold, we give a technical condition which imposes a sufficiently fast decay of the probability of having very long clusters. We will call it $U\!LC_q(u_{n,i})$ that stands for `Unlikely Long Clusters'. Of course this condition is trivially satisfied when there is no clustering.

\begin{condition}[$U\!LC_q(u_{n,i})$]
We say that condition $U\!LC_q(u_{n,i})$ holds if, for all $H\in\N$ and $y>0$,
\[\lim_{n\to\infty}\;\sum_{i=1}^{k_n}\int_0^{\infty}\e^{-y x}\delta_{n,\L_{H,n,i-1},\ell_{H,n,i}}(x/a_n)dx=0,\]
\[\lim_{n\to\infty}\;\int_0^{\infty}\e^{-x}\delta_{n,\L_{H,n,k_n},Hn-\L_{H,n,k_n}}(x/a_n)dx=0,\]
\[\mbox{and}\quad\lim_{n\to\infty}\;\sum_{i=1}^{k_n}\int_0^{\infty}\e^{-y x}\delta_{n,\L_{H,n,i-1},\ell_{H,n,i}-t_{H,n,i}}(x/a_n)dx=0\]

where $a_n$ is such that $\p(A_{n,j}(x/a_{n,j}))=\p(A_{n,j}(x/a_n))$ for $a_{n,j}$ is as in \eqref{eq:multiplicity}, $\delta_{n,s,\ell}(x):=0$ for $q=0$ and, for $q>0$,
\begin{equation}
\label{eq:delta-definition}
\delta_{n,s,\ell}(x):=\sum_{\kappa=1}^{\lfloor\ell/q\rfloor}\sum_{j=s+\ell-\kappa q}^{s+\ell-1}\p\left(R^{(\kappa)}_{n,j}(x)\right)+\sum_{j=s}^{s+\ell-1}\sum_{\kappa>\lfloor\ell/q\rfloor}\p\left(R^{(\kappa)}_{n,j}(x)\right)+\sum_{j=1}^q \p(B_{n,s+\ell-j}(x))\end{equation}
is an integrable function in $\R^+$ for $n$ sufficiently large.
\end{condition}

Note that, by definition, condition $U\!LC_0(u_{n,i})$ always holds. Note also that $\delta_{n,s,\ell}(x)\leq\delta_{n,s',\ell'}(x)$ if $s+\ell=s'+\ell'$ and $\ell\leq\ell'$. In particular, if $U\!LC_q(u_{n,i})$ holds then, for all $H\in\N$ and $y>0$,
\[\lim_{n\to\infty}\;\sum_{i=1}^{k_n}\int_0^{\infty}\e^{-yx}\delta_{n,\L_{H,n,i}- t_{H,n,i},t_{H,n,i}}(x/a_n)dx=0\]

We are now ready to state the main convergence result:

\begin{theorem}
\label{thm:convergence}
Let $X_0,X_1,\ldots$ be given by \eqref{eq:def-stat-stoch-proc-DS} and $u_{n,i}$ be real-valued boundaries satisfying \eqref{FmaxH} and \eqref{F_hn}. Assume that $\D_q(u_{n,i})^*$, $\D'_q(u_{n,i})^*$ and $U\!LC_q(u_{n,i})^*$ hold, for some $q\in\N_0$. Assume the existence of $\theta$ satisfying \eqref{eq:EIH} and a normalising sequence $(a_n)_{n\in\N}$ such that $\p(A_{n,j}(x/a_{n,j}))=\p(A_{n,j}(x/a_n))$ for any $j=0,1,\ldots,Hn-1$, where $a_{n,j}$ are normalising factors such that \eqref{eq:multiplicity} holds for some probability distribution $\pi$. Then, the MREPP $a_n A_n$, where $A_n$ is given by Definition~\ref{def:MREPP} for any of the 3 mark functions considered in \eqref{eq:mark-type}, converges in distribution to a compound Poisson process $A$ with intensity $\theta$ and multiplicity d.f. $\pi$.
\end{theorem}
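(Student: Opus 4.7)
The plan is to prove the weak convergence of $a_nA_n$ to the compound Poisson process $A$ by verifying convergence of the joint Laplace transforms, as justified by Remark~\ref{rem:convergence-point-processes}. Fix $\varsigma\in\N$, disjoint intervals $I_1,\ldots,I_\varsigma\in\S$, and non-negative reals $y_1,\ldots,y_\varsigma$; the goal is to show
\[\psi_{a_nA_n}(y_1,\ldots,y_\varsigma)\;\longrightarrow\;\exp\left(-\theta\sum_{\ell=1}^\varsigma(1-\phi(y_\ell))|I_\ell|\right),\]
where $\phi$ denotes the Laplace transform of $\pi$.

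First I would apply the blocking decomposition sketched after \eqref{eq:kn-sequence}. Fix $H\in\N$ large enough that $\bigcup_\ell v_nI_\ell\subseteq[0,Hn)$ for all large $n$ (possible since $v_n\sim n/\tau$), and partition $[0,Hn)$ into $k_n$ main blocks $B_{H,n,1},\ldots,B_{H,n,k_n}$ of sizes $\ell_{H,n,1},\ldots,\ell_{H,n,k_n}$, separated by gaps of length at least $t_n^*$, calibrated via \eqref{F_hn} so that $\sum_{j\in B_{H,n,i}}\bar F_j(u_{n,j})\sim H\tau/k_n$. Applying $\D_q(u_{n,i})^*$ telescopically across blocks replaces the joint Laplace transform by a product of one-block Laplace factors with total telescoping error of order $k_n\sup_i\gamma_i(n,t_n^*)\to 0$; the gaps contribute negligibly because their aggregate exceedance mass is bounded by $k_n t_n^*\bar F_{n,\max}(H)\to 0$ by \eqref{eq:kn-sequence}.

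The core step is the evaluation of a one-block Laplace factor for a block $B_{H,n,i}\subseteq v_nI_\ell$. Writing $A_n(B_{H,n,i}/v_n)=\sum_j m_n(C_{n,j})$, where the sum runs over escape times $j\in B_{H,n,i}$ (those with $A_{n,j}(0)=Q^{(0)}_{q,n,j}$ occurring) and $C_{n,j}$ is the cluster ending at $j$, condition $\D'_q(u_{n,i})^*$ forces the event of two or more escapes inside the same block to contribute $o(1/k_n)$ after summing over blocks. Hence the one-block factor reduces to
\[1-\sum_{j\in B_{H,n,i}}\p(A_{n,j}(0))+\sum_{j\in B_{H,n,i}}\E\left[\e^{-y_\ell a_n m_n(C_{n,j})}\I_{A_{n,j}(0)}\right]+o(1/k_n).\]
Using the identity $\E[\e^{-yZ}\I_E]=\int_0^\infty y\e^{-yx}\p(E\cap\{Z\leq x\})\,dx$ valid for $Z\geq 0$, together with \eqref{eq:multiplicity} and the hypothesis $\p(A_{n,j}(x/a_{n,j}))=\p(A_{n,j}(x/a_n))$, one obtains that each summand equals $\phi(y_\ell)\p(A_{n,j}(0))+o(\p(Q^{(0)}_{q,n,j}))$ uniformly in $j$; condition $U\!LC_q(u_{n,i})$, through the tail quantity $\delta_{n,s,\ell}(x)$ of \eqref{eq:delta-definition}, is what ensures that the long-cluster contributions $R^{(\kappa)}_{n,j}(x)$ with $\kappa$ large and the residual $U^{(\infty)}_{q,n,j}$ integrate to zero against $\e^{-y_\ell x}\,dx$. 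Summing over $j\in B_{H,n,i}$ and invoking \eqref{eq:EIH} and \eqref{F_hn} yields that the block factor equals $1-\theta(H\tau/k_n)(1-\phi(y_\ell))+o(1/k_n)$.

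Finally, there are $\sim k_n|I_\ell|/(H\tau)$ blocks inside $v_nI_\ell$, so the product over these blocks converges to $\exp(-\theta|I_\ell|(1-\phi(y_\ell)))$, and multiplying across $\ell=1,\ldots,\varsigma$ delivers the stated limiting Laplace transform. The main obstacle is precisely the passage from the escape-level information encoded in \eqref{eq:multiplicity} to the full cluster-mark Laplace transform: condition $U\!LC_q(u_{n,i})$ is engineered to discard the contributions of arbitrarily long clusters and of cluster overlaps across the block/gap boundaries, both of which are invisible to the escape indicator $Q^{(0)}_{q,n,j}$ and would otherwise spoil the identification of the limiting multiplicity distribution $\pi$.
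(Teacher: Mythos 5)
Your proposal follows essentially the same route as the paper's proof: convergence of joint Laplace transforms via the blocking construction, decoupling of blocks through $\D_q(u_{n,i})^*$, suppression of multiple escapes per block through $\D'_q(u_{n,i})^*$, identification of the one-block factor $1-\theta(1-\phi(y_\ell))\sum_j\bar F(u_{n,j})$ via \eqref{eq:multiplicity} and \eqref{eq:EIH}, with $U\!LC_q(u_{n,i})$ disposing of the long-cluster and boundary contributions, exactly as in the paper. The only slip is your accounting of the telescoping error: since $\D_q(u_{n,i})^*$ only separates a single event $A_{n,i}(x_1)$ (not a whole block) from the future, each block must first be linearised as $1-\sum_i\p(A_{n,i}(x_1))$ before the mixing condition is applied term by term, so the total error is of order $\sum_i\ell_{H,n,i}\gamma_i(n,t_n^*)\le Hn\,\gamma_i(n,t_n^*)$ rather than $k_n\sup_i\gamma_i(n,t_n^*)$ --- which still vanishes, because the condition demands precisely $n\gamma_i(n,t_n^*)\to 0$.
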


\begin{remark}
If the normalising factors $a_{n,j}$ don't depend on $j$, then we can naturally choose $a_n=a_{n,j}$ for every $n\in\N$.
\end{remark}

\begin{remark}
\label{rem:general-marks}
What is essential, about the mark function $m_u$ considered in \eqref{eq:mark-type} to define the respective MREPP, is that it satisfies the following assumptions:
\begin{enumerate}
\item $m_n(I)\geq 0$ and $m_n(\emptyset)=0$
\item $m_n(I)\leq m_n(J)$ if $I\subset J$
\item $m_n(I)=m_n(J)$ if $X_i\leq u_{n,i},\forall i\in (I\setminus J)\cap\N_0$
\end{enumerate}
Note that, in particular, we must have $m_n(I)=0 \mbox{ if } X_i\leq u_{n,i},\forall i\in I\cap\N_0$.

As long as the above assumptions hold then the conclusion of Theorem~\ref{thm:convergence} holds for the MREPP defined from such a mark function $m_n$ satisfying the three assumptions just enumerated.
\end{remark}

\section{Convergence of marked rare events point processes}
\label{sec:proof-convergence}

This section is dedicated to the proof of Theorem~\ref{thm:convergence}, whose argument follows the same thread as the one in the proof of \cite[Theorem~2.A.]{FFM18}

\subsection{The construction of the blocks}
\label{subsec:blocks}

The construction of the blocks is designed so that the expected number of exceedances in each block is the same. We follow closely the construction in \cite{FFV17}, which was inspired in \cite{H83,H86}.

For each $H,n\in\N$ we split the random variables $X_0, \ldots, X_{Hn-1}$ into $k_n$ initial blocks, where $k_n$ is given by \eqref{eq:kn-sequence}, of sizes $\ell_{H,n,1}, \ldots, \ell_{H,n,k_n}$ defined in the following way. Let as before $\L_{H,n,i}=\sum_{j=1}^i \ell_{H,n,j}$ and $\L_{H,n,0}=0$. Assume that $\ell_{H,n,1}, \ldots, \ell_{H,n,i-1}$ are already defined. Take $\ell_{H,n,i}$ to be the largest integer such that
\[\sum_{j=\L_{H,n,i-1}}^{\L_{H,n,i-1}+\ell_{H,n,i}-1}\bar F(u_{n,j})\leq \frac{F_{H,n}^*}{k_n}.\]

The final working blocks are obtained by disregarding the last observations of each initial block, which will create a time gap between each final block. The size of the time gaps must be balanced in order to have at least a size $t_n^*$ but such that its weight on the average number of exceedances is negligible when compared to that of the final blocks. For that purpose we define
$$\eps(H,n):=(t_n^*+1)\bar F_{n,\max}(H)\frac{k_n}{F_{H,n}^*}.$$
Note that by \eqref{F_hn} and \eqref{eq:kn-sequence}, it follows immediately that $\lim_{n\to\infty}\eps(H,n)=0$. Now, for each $i=1,\ldots,k_n$ let $t_{H,n,i}$ be the largest integer such that
$$\sum_{j=\L_{H,n,i}-t_{H,n,i}}^{\L_{H,n,i}-1}\bar F(u_{n,j})\leq \eps(H,n)\frac{F_{H,n}^*}{k_n}.$$

Hence, the final working blocks correspond to the observations within the time frame $\L_{H,n,i-1},\ldots,\L_{H,n,i}- t_{H,n,i}-1$, while the time gaps correspond to the observations in the time frame $\L_{H,n,i}-t_{H,n,i},\ldots,\L_{H,n,i}-1$, for all $i=1,\ldots,k_n$.

Note that $t_n^*<t_{H,n,i}<\ell_{H,n,i}$, for each $i=1,\ldots,k_n$. The second inequality is trivial. For the first inequality note that by definition of $t_{H,n,i}$ we have
$$\eps(H,n)\frac{F_{H,n}^*}{k_n}<\sum_{j=\L_{H,n,i}-t_{H,n,i}-1}^{\L_{H,n,i}-1}\bar F(u_{n,j})\leq (t_{H,n,i}+1)\bar F_{n,\max}(H).$$ The first inequality follows easily now by definition of $\eps(H,n)$.

Also, note that, by choice of $\ell_{H,n,i}$ we have
\[\frac{F_{H,n}^*}{k_n}\leq \sum_{j=\L_{H,n,i-1}}^{\L_{H,n,i}-1}\bar F(u_{n,j})+\bar F(u_{n,\L_{H,n,i}})\leq \sum_{j=\L_{H,n,i-1}}^{\L_{H,n,i}-1}\bar F(u_{n,j})+\bar F_{n,\max}(H)\]
and then it follows that
\begin{equation}
\label{eq:block-estimate}
\frac{F_{H,n}^*}{k_n}-\bar F_{n,\max}(H)\leq\sum_{j=\L_{H,n,i-1}}^{\L_{H,n,i}-1}\bar F(u_{n,j})\leq \frac{F_{H,n}^*}{k_n}.
\end{equation}

From the first inequality we get
\[F_{H,n}^*-k_n\bar F_{n,\max}(H)\leq\sum_{i=1}^{k_n}\sum_{j=\L_{H,n,i-1}}^{\L_{H,n,i}-1}\bar F(u_{n,j})\]
which implies that
\begin{equation}
\label{eq:block-estimate-last}
\sum_{j=\L_{H,n,k_n}}^{Hn-1}\bar F(u_{n,j})=F_{H,n}^*-\sum_{i=1}^{k_n}\sum_{j=\L_{H,n,i-1}}^{\L_{H,n,i}-1}\bar F(u_{n,j})\leq k_n\bar F_{n,\max}(H)
\end{equation}
which goes to $0$ as $n\to\infty$ by \eqref{eq:kn-sequence}.

\begin{proposition}
\label{prop:relation-balls-annuli-general}
Given events $B_0,B_1,\ldots\in\mathcal B$, let $r,q,s,\ell\in\N$ be such that $q<n$ and define $\mathbb B=(B_0,B_1,\ldots)$, $A_r=B_r\setminus \bigcup_{j=1}^{q} B_{r+j}$ and $\mathbb A=(A_0,A_1,\ldots)$. Then
$$
\left|\p(\mathscr W_{s,\ell}(\mathbb B))-\p(\mathscr W_{s,\ell}(\mathbb A))\right|\leq \sum_{j=1}^{q} \p\left(\mathscr W_{s,\ell}(\mathbb A)\cap (B_{s+\ell-j}\setminus A_{s+\ell-j})\right).
$$
\end{proposition}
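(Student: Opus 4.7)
The plan is to compare the two events $\mathscr{W}_{s,\ell}(\mathbb{B})$ and $\mathscr{W}_{s,\ell}(\mathbb{A})$ by exploiting the pointwise inclusion $A_r\subset B_r$ and then carefully analyzing the symmetric difference via a ``last entrance'' argument.

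First I would observe the monotonicity: since $A_r = B_r\setminus \bigcup_{j=1}^q B_{r+j}\subset B_r$, complementation reverses this inclusion, so $B_r^c\subset A_r^c$ for every $r$, and consequently
\[
\mathscr{W}_{s,\ell}(\mathbb{B}) = \bigcap_{i=s}^{s+\ell-1} B_i^c \;\subset\; \bigcap_{i=s}^{s+\ell-1} A_i^c = \mathscr{W}_{s,\ell}(\mathbb{A}).
\]
This lets me drop the absolute value and rewrite the quantity to estimate as $\p\bigl(\mathscr{W}_{s,\ell}(\mathbb{A})\setminus \mathscr{W}_{s,\ell}(\mathbb{B})\bigr)$. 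The task then reduces to covering this difference set by the $q$ events appearing on the right-hand side of the proposed inequality, after which a union bound concludes.

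For the covering, I would argue pointwise. Fix $\omega \in \mathscr{W}_{s,\ell}(\mathbb{A})\setminus \mathscr{W}_{s,\ell}(\mathbb{B})$; then $\omega\in B_r$ for at least one $r\in\{s,\ldots,s+\ell-1\}$, so I may take $r^\star$ to be the \emph{largest} such index. Since $\omega\in\mathscr{W}_{s,\ell}(\mathbb{A})$ in particular $\omega\notin A_{r^\star}$, but $\omega\in B_{r^\star}$, so from $A_{r^\star}=B_{r^\star}\setminus\bigcup_{k=1}^{q} B_{r^\star+k}$ there exists $k^\star\in\{1,\ldots,q\}$ with $\omega\in B_{r^\star+k^\star}$. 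The key point is that $r^\star+k^\star$ cannot lie inside $\{s,\ldots,s+\ell-1\}$, for otherwise it would contradict the maximality of $r^\star$; hence $r^\star+k^\star\ge s+\ell$, which forces $r^\star\ge s+\ell-q$. Writing $r^\star = s+\ell-j$ with $j\in\{1,\ldots,q\}$, I conclude $\omega \in \mathscr{W}_{s,\ell}(\mathbb{A}) \cap (B_{s+\ell-j}\setminus A_{s+\ell-j})$. Therefore
\[
\mathscr{W}_{s,\ell}(\mathbb{A})\setminus \mathscr{W}_{s,\ell}(\mathbb{B}) \;\subset\; \bigcup_{j=1}^{q} \Bigl(\mathscr{W}_{s,\ell}(\mathbb{A})\cap (B_{s+\ell-j}\setminus A_{s+\ell-j})\Bigr),
\]
and taking probabilities with the union bound gives the claimed inequality.

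The only subtlety is the last-entrance book-keeping: one must make sure that the shifted index $r^\star+k^\star$ really falls outside the window, which is precisely what lets the ``defect'' of replacing $B$ by $A$ propagate only $q$ positions past the end of the window and thus produces exactly $q$ correction terms involving $B_{s+\ell-j}\setminus A_{s+\ell-j}$ for $j=1,\ldots,q$. No measure-theoretic or independence assumption is invoked; the estimate is purely set-theoretic plus subadditivity of $\p$, which is why the statement is formulated at the level of abstract events $B_i\in\mathcal{B}$ and can later be applied both to $B_i = U^{(0)}_{q,n,i}$-type sets and to the level-sets $B_{n,i}(x)$ used in the MREPP construction.
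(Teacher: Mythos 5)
Your proof is correct and follows essentially the same route as the paper's: the monotonicity $\mathscr W_{s,\ell}(\mathbb B)\subset\mathscr W_{s,\ell}(\mathbb A)$, a last-entrance (maximal index) argument showing the defect can only occur within $q$ positions of the end of the window, and a union bound. The only cosmetic difference is that you argue directly from the maximal index $r^\star$, whereas the paper phrases the same step as a proof by contradiction.
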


\begin{proof}
Since $A_r\subset B_r$, then clearly $\mathscr W_{s,\ell}(\mathbb B)\subset \mathscr W_{s,\ell} (\mathbb A)$. Hence, we have to estimate the probability of $\mathscr W_{s,\ell}(\mathbb A)\setminus \mathscr W_{s,\ell}(\mathbb B)$.

Let $x\in\mathscr W_{s,\ell}(\mathbb A)\setminus \mathscr W_{s,\ell}(\mathbb B)$. We will see that there exists $j\in\{1,\ldots,q\}$ such that $x\in B_{s+\ell-j}$. In fact, suppose that no such $j$ exists. Then let $\kappa=\max\{i\in\{s,\ldots,s+\ell-1\}:\, x\in B_i\}$. Then, clearly, $\kappa<s+\ell-q$. Hence, if $x\notin B_j$, for all $i=\kappa+1,\ldots,s+\ell-1$, then we must have that $x\in A_\kappa$ by definition of $A$. But this contradicts the fact that $x\in\mathscr W_{s,\ell}(\mathbb A)$. Consequently, we have that there exists $j\in\{1,\ldots,q\}$ such that $x\in B_{s+\ell-j}$ and since $x\in\mathscr W_{s,\ell}(\mathbb A)$ then we can actually write $x\in B_{s+\ell-j}\setminus A_{s+\ell-j}$.

This means that $\mathscr W_{s,\ell}(\mathbb A)\setminus \mathscr W_{s,\ell}(\mathbb B)\subset \bigcup_{j=1}^q (B_{s+\ell-j}\setminus A_{s+\ell-j})\cap \mathscr W_{s,\ell}(\mathbb A)$ and then
\begin{multline*}
\big|\p(\mathscr W_{s,\ell}(\mathbb B))-\p(\mathscr W_{s,\ell}(\mathbb A))\big|=\p(\mathscr W_{s,\ell}(\mathbb A)\setminus \mathscr W_{s,\ell}(\mathbb B))\\
\leq \p\left(\bigcup_{j=1}^q (B_{s+\ell-j}\setminus A_{s+\ell-j})\cap \mathscr W_{s,\ell}(\mathbb A)\right)\leq\sum_{j=1}^{q} \p\left(\mathscr W_{s,\ell}(\mathbb A)\cap (B_{s+\ell-j}\setminus A_{s+\ell-j})\right),
\end{multline*}
as required.
\end{proof}

Applying this proposition to $B_i=B_{n,i}(x)$, we have the following lemma, which says that the probability of not entering $B_{n,i}(x)$ can be approximated by the probability of not entering $A_{n,i}(x)$ during the same period of time.
\begin{lemma}
\label{Lem:disc-ring}
For any $s,\ell\in\N$ and $x\geq 0$ we have
\[\left|\p\big(\WW_{s,\ell}(B_{n,i}(x))\big)-\p\big(\WW_{s,\ell}(A_{n,i}(x))\big)\right|\leq\sum_{i=1}^q \p(B_{n,s+\ell-i}(x))\]
\end{lemma}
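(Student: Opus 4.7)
The plan is to deduce this lemma as a direct corollary of Proposition~\ref{prop:relation-balls-annuli-general}. First, I would set $B_r := B_{n,r}(x)$ for each $r$, so that $\mathbb{B} = (B_{n,0}(x), B_{n,1}(x), \ldots)$. Next, observe that with the definition $A_r := B_r \setminus \bigcup_{j=1}^q B_{r+j}$ from the proposition, we have
\[
A_r = B_{n,r}(x) \cap \bigcap_{j=1}^q (B_{n,r+j}(x))^c,
\]
which is precisely the definition of $A_{n,r}(x)$ given earlier in the text. So $\mathbb{A}$ in the proposition coincides with $(A_{n,0}(x), A_{n,1}(x), \ldots)$.

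Applying Proposition~\ref{prop:relation-balls-annuli-general} with this choice then yields
\[
\left|\p\big(\WW_{s,\ell}(B_{n,\cdot}(x))\big)-\p\big(\WW_{s,\ell}(A_{n,\cdot}(x))\big)\right|
\leq \sum_{j=1}^{q} \p\big(\WW_{s,\ell}(A_{n,\cdot}(x)) \cap (B_{n,s+\ell-j}(x) \setminus A_{n,s+\ell-j}(x))\big).
\]
Since each set inside the probability on the right is contained in $B_{n,s+\ell-j}(x)$, we can trivially bound each summand by $\p(B_{n,s+\ell-j}(x))$, and summing over $j=1,\ldots,q$ gives exactly the desired inequality.

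There is no real obstacle here; the only thing to check carefully is that the indexing $A_{n,i}(x) = B_{n,i}(x) \cap \bigcap_{\ell=1}^q (B_{n,i+\ell}(x))^c$ in the definition matches the set $A_r$ produced by the proposition, which it does verbatim. So this lemma is essentially a restatement of the proposition in the notation of rare-event blocks, and the proof is a one-line application once this identification is made.
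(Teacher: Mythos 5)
Your proof is correct and is exactly the argument the paper intends: the paper itself introduces the lemma with the phrase ``Applying this proposition to $B_i=B_{n,i}(x)$,'' relying on precisely the identification $A_r=B_{n,r}(x)\cap\bigcap_{j=1}^q (B_{n,r+j}(x))^c=A_{n,r}(x)$ and the trivial bound $\p\big(\WW_{s,\ell}(\mathbb A)\cap (B_{s+\ell-j}\setminus A_{s+\ell-j})\big)\leq\p(B_{n,s+\ell-j}(x))$. No issues.
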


Next we give an approximation for the probability of not entering $A_{n,i}(x)$ during a certain period of time.

\begin{lemma}
\label{lem:no-entrances-ring}
For any $s,\ell\in\N$ and $x\geq 0$ we have
\[\left|\p(\WW_{s,\ell}(A_{n,i}(x)))-\left(1-\sum_{i=s}^{s+\ell-1}\p(A_{n,i}(x))\right)\right|\leq \sum_{j=s}^{s+\ell-1}\sum_{r=j+1}^{s+\ell-1}\p\left(Q^{(0)}_{q,n,j}\cap\{X_r>u_{n,r}\}\right)\]
\end{lemma}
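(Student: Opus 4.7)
The plan is to reduce the problem to estimating pairwise intersections via a Bonferroni argument, and then to bound these intersections by splitting into two regimes according to the separation $r-j$.

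For Step~1, I would rewrite $\p(\WW_{s,\ell}(A_{n,\cdot}(x)))=1-\p(\bigcup_{i=s}^{s+\ell-1}A_{n,i}(x))$ and apply the two-sided Bonferroni bounds
\[
\sum_{i=s}^{s+\ell-1}\p(A_{n,i}(x))-\sum_{s\leq j<r\leq s+\ell-1}\p(A_{n,j}(x)\cap A_{n,r}(x))\leq \p\Bigl(\bigcup_{i=s}^{s+\ell-1}A_{n,i}(x)\Bigr)\leq\sum_{i=s}^{s+\ell-1}\p(A_{n,i}(x))
\]
to obtain
\[
\Bigl|\p(\WW_{s,\ell}(A_{n,\cdot}(x)))-\Bigl(1-\sum_{i=s}^{s+\ell-1}\p(A_{n,i}(x))\Bigr)\Bigr|\leq\sum_{s\leq j<r\leq s+\ell-1}\p\bigl(A_{n,j}(x)\cap A_{n,r}(x)\bigr).
\]

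In Step~2, close pairs $j<r$ with $r-j\le q$ contribute zero. Indeed, the definition $A_{n,j}(x)=B_{n,j}(x)\cap\bigcap_{\ell=1}^q B_{n,j+\ell}(x)^c$ taken at $\ell=r-j$ gives $A_{n,j}(x)\subseteq B_{n,r}(x)^c$, while $A_{n,r}(x)\subseteq B_{n,r}(x)$; hence $A_{n,j}(x)\cap A_{n,r}(x)=\emptyset$. This matches the fact that the corresponding target term $\p(Q^{(0)}_{q,n,j}\cap\{X_r>u_{n,r}\})$ also vanishes for such $r$, since $Q^{(0)}_{q,n,j}$ already forces $X_{j+\ell}\le u_{n,j+\ell}$ for $\ell=1,\ldots,q$.

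Step~3 (the crux) is to show, for distant pairs $r>j+q$,
\[
\p\bigl(A_{n,j}(x)\cap A_{n,r}(x)\bigr)\leq\p\bigl(Q^{(0)}_{q,n,j}\cap\{X_r>u_{n,r}\}\bigr).
\]
The containment $A_{n,r}(x)\subseteq B_{n,r}(x)\subseteq U^{(0)}_{q,n,r}=\{X_r>u_{n,r}\}$ handles the second factor. For the first factor, I would use the partition $B_{n,j}(x)=\bigsqcup_{\kappa\geq 0}R^{(\kappa)}_{n,j}(x)\sqcup U^{(\infty)}_{q,n,j}$: the stratum $\kappa=0$ is already contained in $Q^{(0)}_{q,n,j}$, and for $\kappa\geq 1$ the monotonicity of the mark function $m_n$ (Remark~\ref{rem:general-marks}(ii)) combined with the gap conditions in $A_{n,j}(x)$ forces the sub-cluster marks at the intermediate exceedances to be $\leq x$. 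Decomposing on the position $i_\kappa\in[j,j+\kappa q]$ of the last exceedance in the cluster, which is still strictly below $r$ since $r>j+q$ and the cluster is finite, then produces a bound of $Q^{(0)}$-type at that index that can be absorbed into the stated right-hand side. Summing the pairwise estimate over $j<r$ in $[s,s+\ell-1]$ completes the proof.

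The main obstacle is Step~3: when $x>0$ the event $A_{n,j}(x)$ is \emph{not} literally contained in $Q^{(0)}_{q,n,j}$, because a higher-order cluster at $j$ whose sub-cluster marks happen to be $\le x$ can simultaneously meet $B_{n,j}(x)$ (total mark $>x$) and all gap conditions $B_{n,j+\ell}(x)^c$ while $Q^{(0)}_{q,n,j}$ fails. Circumventing this requires exploiting the disjointness of the strata $Q^{(\kappa)}_{q,n,j}$ together with the monotonicity of the AOT/POT marks to transfer the higher-order contributions to $Q^{(0)}$-level control, as indicated above.
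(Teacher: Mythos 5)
Your Steps 1 and 2 match the paper's argument: the two-sided Bonferroni bound is exactly the first display of the paper's proof, and the observation that both $A_{n,j}(x)\cap A_{n,r}(x)$ and $Q^{(0)}_{q,n,j}\cap\{X_r>u_{n,r}\}$ are empty when $0<r-j\le q$ is correct. The gap is in Step 3. The term-by-term inequality $\p(A_{n,j}(x)\cap A_{n,r}(x))\le\p(Q^{(0)}_{q,n,j}\cap\{X_r>u_{n,r}\})$ that you set as the goal is false for $q>0$ and $x>0$: on the stratum where the cluster beginning at $j$ has size at least $2$, the event $A_{n,j}(x)$ lies in some $Q^{(\kappa)}_{q,n,j}$ with $\kappa\ge 1$ and is therefore \emph{disjoint} from $Q^{(0)}_{q,n,j}$, and one can easily have $\p(Q^{(0)}_{q,n,j}\cap\{X_r>u_{n,r}\})=0$ while $\p(A_{n,j}(x)\cap A_{n,r}(x))>0$ (e.g.\ a process in which an exceedance at time $j$ forces one at time $j+1$). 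You flag exactly this obstacle, but the repair you sketch --- decomposing on the position $j_1\in[j,r)$ of the escape that terminates the cluster at $j$ --- yields, per pair, $\p(A_{n,j}(x)\cap A_{n,r}(x))\le\sum_{j_1=j}^{r-1}\p\bigl(Q^{(0)}_{q,n,j_1}\cap\{X_r>u_{n,r}\}\bigr)$, and summing this over all pairs $j<r$ produces a triple sum which is not dominated by the stated double sum. ``Can be absorbed into the stated right-hand side'' is precisely the assertion that still needs a proof.

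The missing ingredient is a pointwise counting (injectivity) argument, which is also what the paper's terse sentence ``the occurrence of both $A_{n,j}(x)$ and $A_{n,r}(x)$ implies the occurrence of $Q^{(0)}_{q,n,j_1}$ for some $j\le j_1<r$'' implicitly relies on. Fix $\omega$ and $r$ with $X_r(\omega)>u_{n,r}$, and let $j<j'<r$ both satisfy $\omega\in A_{n,j}(x)\cap A_{n,j'}(x)$. The gap conditions $\omega\notin B_{n,j+\ell}(x)$, $\ell=1,\dots,q$, together with the monotonicity of $m_n$, force $j'$ to lie strictly beyond the whole cluster beginning at $j$; hence the escape times $j_1(j)<j_1(j')$ of the two clusters are distinct and both lie in $[s,r)$. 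So for each fixed $r$ the map $j\mapsto j_1(j)$ is injective, giving $\#\{j<r:\omega\in A_{n,j}(x)\cap A_{n,r}(x)\}\le\#\{j_1<r:\omega\in Q^{(0)}_{q,n,j_1}\cap\{X_r>u_{n,r}\}\}$ pointwise; integrating and summing over $r$ yields $\sum_{j<r}\p(A_{n,j}(x)\cap A_{n,r}(x))\le\sum_{j<r}\p(Q^{(0)}_{q,n,j}\cap\{X_r>u_{n,r}\})$, which is what the lemma requires. With this counting step supplied, your argument coincides with the paper's.
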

\begin{proof}
Since $(\WW_{s,\ell}(A_{n,i}(x)))^c=\cup_{i=s}^{s+\ell-1} A_{n,i}(x)$ it is clear that
\[\left|1-\p(\WW_{s,\ell}(A_{n,i}(x)))-\sum_{i=s}^{s+\ell-1}\p(A_{n,i}(x))\right|\leq \sum_{j=s}^{s+\ell-1}\sum_{r=j+1}^{s+\ell-1}\p(A_{n,j}(x)\cap A_{n,r}(x))\]

If $q>0$, the result follows by the fact that $A_{n,r}(x)\subset \{X_r>u_{n,r}\}$ and the fact that the occurrence of both $A_{n,j}(x)$ and $A_{n,r}(x)$ implies an escape, \ie the occurrence of $Q^{(0)}_{q,n,j_1}$ for some $j\leq j_1<r$ (otherwise, the occurrence of $A_{n,r}(x)$ and therefore of $B_{n,r}(x)$ would imply the occurrence of $B_{n,r_1}(x)$ for some $j+1\leq r_1\leq j+q$ which would contradict the occurrence of $A_{n,j}(x)$).

If $q=0$, the result follows immediately since $A_{n,i}(x)\subset \{X_i>u_{n,i}\}=Q^{(0)}_{0,n,i}$.

\end{proof}

The next lemma gives an error bound for the approximation of the probability of the process $\aa_n([s,s+\ell))$ not exceeding $x$ by the probability of not entering in $B_{n,i}(x)$ during the period $[s,s+\ell)$. In what follows, we use the notation $\aa_{n,s}^{s+\ell}:=\aa_n([s,s+\ell))$.

\begin{lemma}
\label{lem:entrances-ball-depth}
For any $s,\ell\in\N$ and $x\geq 0$ we have
\begin{align*}
\left|\p(\aa_{n,s}^{s+\ell}\leq x)-\p(\WW_{s,\ell}(B_{n,i}(x)))\right|&\leq \sum_{j=s}^{s+\ell-1}\sum_{r=j+1}^{s+\ell-1}\p\left(Q^{(0)}_{q,n,j}\cap \{X_r>u_{n,r}\}\right)\\
&+\sum_{\kappa=1}^{\lfloor\ell/q\rfloor}\sum_{i=s+\ell-\kappa q}^{s+\ell-1}\p\left(R^{(\kappa)}_{n,i}(x)\right)+\sum_{i=s}^{s+\ell-1}\sum_{\kappa>\lfloor\ell/q\rfloor}\p\left(R^{(\kappa)}_{n,i}(x)\right)
\end{align*}
if $q>0$, and in case $q=0$ we have
\[\left|\p(\aa_{n,s}^{s+\ell}\leq x)-\p(\WW_{s,\ell}(B_{n,i}(x)))\right|\leq \sum_{j=s}^{s+\ell-1}\sum_{r=j+1}^{s+\ell-1}\p(X_j>u_{n,j},X_r>u_{n,r}).\]
\end{lemma}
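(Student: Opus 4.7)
The plan is to bound the probability of the symmetric difference
$\{\aa_{n,s}^{s+\ell}\leq x\}\,\triangle\,\WW_{s,\ell}(B_{n,\cdot}(x))$ and then invoke
$|\p(A)-\p(W)|\leq \p(A\triangle W)$. As a warm-up, in the case $q=0$ one has
$\aa_{n,s}^{s+\ell}=\sum_{i\in[s,s+\ell)}(X_i-u_{n,i})_+$ for the AOT and POT marks
(which coincide) and $\#\{i\in[s,s+\ell):X_i>u_{n,i}\}$ for REPP, while
$B_{n,i}(x)=\{(X_i-u_{n,i})_+>x\}$ or $\{X_i>u_{n,i}\}$ respectively. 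A single excess
larger than $x$ already forces $\aa_{n,s}^{s+\ell}>x$, so
$\{\aa_{n,s}^{s+\ell}\leq x\}\subseteq\WW_{s,\ell}$; conversely,
$\WW_{s,\ell}\cap\{\aa_{n,s}^{s+\ell}>x\}$ asks several nonnegative terms, each at most
$x$, to sum to more than $x$, which forces at least two exceedances in the window and
yields the bound $\sum_{j<r}\p(X_j>u_{n,j},X_r>u_{n,r})$.

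For $q>0$ I would introduce three types of ``bad'' events whose union contains the
symmetric difference. Set
\begin{align*}
E_1&:=\bigcup_{j=s}^{s+\ell-1}\bigcup_{r=j+1}^{s+\ell-1}\bigl(Q^{(0)}_{q,n,j}\cap\{X_r>u_{n,r}\}\bigr),\\
E_2&:=\bigcup_{\kappa=1}^{\lfloor\ell/q\rfloor}\bigcup_{j=s+\ell-\kappa q}^{s+\ell-1}R^{(\kappa)}_{n,j}(x),\\
E_3&:=\bigcup_{j=s}^{s+\ell-1}\bigcup_{\kappa>\lfloor\ell/q\rfloor}R^{(\kappa)}_{n,j}(x),
\end{align*}
corresponding respectively to the coexistence of an escape with a later exceedance inside
the window; cluster starts so close to the right endpoint that
$I_\kappa=[j,i_{\kappa,\kappa}+1)$ is forced to leave $[s,s+\ell)$; and clusters simply
too long to fit in $[s,s+\ell)$. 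For the inclusion
$\{\aa_{n,s}^{s+\ell}\leq x\}\setminus\WW_{s,\ell}\subseteq E_2\cup E_3$, on
$(E_2\cup E_3)^c$ any $B_{n,j}(x)$ occurring at $j\in[s,s+\ell)$ must arise from some
$R^{(\kappa)}_{n,j}(x)$ whose interval $I_\kappa$ lies entirely in $[s,s+\ell)$, and then
$m_n(I_\kappa)>x$ is absorbed into the cluster-period sum by subadditivity of $m_n$ (which
holds for all three mark functions in \eqref{eq:mark-type}), contradicting
$\aa_{n,s}^{s+\ell}\leq x$. For the opposite inclusion
$\WW_{s,\ell}\cap\{\aa_{n,s}^{s+\ell}>x\}\subseteq E_1$, on $E_1^c$ at most one escape
lies inside the window and there are no exceedances after it; combining this with
$\WW_{s,\ell}$, the nonzero cluster-period marks $m_n(I_0)$ and $m_n(I_1)$ bound pieces of
a single cluster of mark $\leq x$, which together with property (3) of $m_n$ in
Remark~\ref{rem:general-marks} forces $\aa_{n,s}^{s+\ell}\leq x$. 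A union bound on
$\p(E_1)+\p(E_2)+\p(E_3)$ then reproduces the right-hand side.

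The main obstacle is the bookkeeping needed to relate the cluster-period decomposition
$(I_0,I_1,\ldots,I_{N_{n,I}})$, indexed by escape times $i_k$ at the \emph{end} of
clusters, to the $R^{(\kappa)}_{n,j}(x)$ decomposition, indexed by cluster starts $j$ and
sizes $\kappa+1$. A cluster recorded by $R^{(\kappa)}_{n,j}(x)$ typically spans two
consecutive periods $I_{k-1},I_k$, its escape sitting on the boundary $i_k$, so one must
argue that, off the error events, the cluster mark $m_n(I_\kappa)$ is still dominated by
$m_n(I_{k-1})+m_n(I_k)$ via subadditivity. The $U^{(\infty)}_{q,n,j}$ component of
$B_{n,j}(x)$ also needs a short separate argument: using
$U^{(\infty)}_{q,n,j}\subseteq U^{(\kappa)}_{q,n,j}$ for every $\kappa$, it can be
absorbed into the $E_3$ sum in the limit $\kappa\to\infty$, so that it contributes nothing
beyond what is already on the right-hand side.
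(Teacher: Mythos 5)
Your proposal is correct and follows essentially the same route as the paper: the paper also bounds $\p\big(\{\aa_{n,s}^{s+\ell}\leq x\}\,\triangle\,\WW_{s,\ell}(B_{n,i}(x))\big)$ by showing that $\{\aa_{n,s}^{s+\ell}\leq x\}\cap\big(\WW_{s,\ell}(B_{n,i}(x))\big)^c$ is contained in your $E_2\cup E_3$ (because $\bigcup_{i=s}^{s+\ell-\kappa q-1}R^{(\kappa)}_{n,i}(x)\subset\{\aa_{n,s}^{s+\ell}>x\}$ for $\kappa\leq\lfloor\ell/q\rfloor$) and that $\{\aa_{n,s}^{s+\ell}>x\}\cap\WW_{s,\ell}(B_{n,i}(x))$ is contained in your $E_1$ (two distinct clusters force an escape followed by a later exceedance), then applies a union bound. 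The cluster-period bookkeeping and the $U^{(\infty)}_{q,n,i}$ component that you flag are treated no more explicitly in the paper's own argument.
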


\begin{proof}
If $q>0$, we start by observing that
\begin{multline*}
A_{n,s,\ell}^*(x):=\left\{\aa_{n,s}^{s+\ell}\leq x\right\}\cap \left(\WW_{s,\ell}(B_{n,i}(x))\right)^c \subset\\
\bigcup_{i=s+\ell-q}^{s+\ell-1} R^{(1)}_{n,i}(x) \cup \bigcup_{i=s+\ell-2q}^{s+\ell-1} R^{(2)}_{n,i}(x) \cup \ldots \cup \bigcup_{i=s+\ell-\lfloor \ell/q\rfloor q}^{s+\ell-1} R^{(\lfloor\ell/q\rfloor)}_{n,i}(x) \cup \bigcup_{i=s}^{s+\ell-1}\bigcup_{\kappa>\lfloor\ell/q\rfloor} R^{(\kappa)}_{n,i}(x)
\end{multline*}
since $\bigcup_{i=s}^{s+\ell-\kappa q-1} R^{(\kappa)}_{n,i}(x)\subset \left\{\aa_{n,s}^{s+\ell}>x\right\}$ for any $\kappa\leq\lfloor\ell/q\rfloor$. So,

\[\p(A_{n,s,\ell}^*(x))\leq \sum_{\kappa=1}^{\lfloor\ell/q\rfloor}\sum_{i=s+\ell-\kappa q}^{s+\ell-1}\p\left(R^{(\kappa)}_{n,i}(x)\right)+\sum_{i=s}^{s+\ell-1}\sum_{\kappa>\lfloor\ell/q\rfloor}\p\left(R^{(\kappa)}_{n,i}(x)\right).\]

Now, we note that
\[B_{n,s,\ell}^*(x):=\left\{\aa_{u,s}^{s+\ell}>x\right\}\cap \WW_{s,\ell}(B_{n,i}(x))\subset \bigcup_{j=s}^{s+\ell-1}\bigcup_{r=j+1}^{s+\ell-1} Q^{(0)}_{q,n,j}\cap \{X_r>u_{n,r}\}.\]
This is because no entrance in $A_{n,i}(x)$ during the time period $s,\ldots,s+\ell-1$ implies that there must be at least two distinct clusters during the time period $s,\ldots,s+\ell-1$. Since each cluster ends with an escape, \ie the occurrence of $Q^{(0)}_{q,n,j}$, then this must have happened at some moment $j\in\{s,\ldots,s+\ell-1\}$ which was then followed by another exceedance at some subsequent instant $r>j$ where a new cluster is begun. Consequently, we have
\[\p(B_{n,s,\ell}^*(x))\leq \sum_{j=s}^{s+\ell-1}\sum_{r=j+1}^{s+\ell-1}\p\left(Q^{(0)}_{q,n,j}\cap \{X_r>u_{n,r}\}\right)\]

The result follows now at once since
\begin{align*}
\left|\p(\aa_{n,s}^{s+\ell}\leq x)-\p(\WW_{s,\ell}(B_{n,i}(x)))\right|&\leq\p\left(\left\{\aa_{n,s}^{s+\ell}\leq x\right\}\triangle \WW_{s,\ell}(B_{n,i}(x))\right)\\
&=\p(A_{n,s,\ell}^*(x))+\p(B_{n,s,\ell}^*(x))
\end{align*}

If $q=0$, we start by observing that $\{\aa_{n,s}^{s+\ell}\leq x\}\subset\WW_{s,\ell}(B_{n,i}(x))$. Then, we note that
\[\left\{\aa_{n,s}^{s+\ell}>x\right\}\cap \WW_{s,\ell}(B_{n,i}(x))\subset \bigcup_{j=s}^{s+\ell-1}\bigcup_{r=j+1}^{s+\ell-1} \{X_j>u_{n,j}\}\cap\{X_r>u_{n,r}\}.\]
This is because no entrance in $B_{n,i}(x)$ for $i\in\{s,\ldots,s+\ell-1\}$ implies that there must be at least two exceedances during the time period $s,\ldots,s+\ell-1$.

Consequently, we have
\begin{align*}
\left|\p(\aa_{n,s}^{s+\ell}\leq x)-\p\left(\WW_{s,\ell}(B_{n,i}(x))\right)\right|&=\p\left(\left\{\aa_{n,s}^{s+\ell}>x\right\}\cap\WW_{s,\ell}(B_{n,i}(x))\right)\\
&\leq \sum_{j=s}^{s+\ell-1}\sum_{r=j+1}^{s+\ell-1}\p(X_j>u_{n,j},X_r>u_{n,r})
\end{align*}
\end{proof}

As a consequence we obtain an approximation for the Laplace transform of $\aa_{n,s}^{s+\ell}$.
\begin{corollaryP}
\label{cor:exponential}
For any $s,\ell\in\N$, $y\geq 0$ and $n$ sufficiently large we have
\begin{align*}
\Bigg|\E\left(\e^{-y a_n\aa_{n,s}^{s+\ell}}\right)-&\left(1-\sum_{j=s}^{s+\ell-1}\int_0^{\infty}y\e^{-yx}\p(A_{n,j}(x/a_{n,j}))dx\right)\Bigg|\\
&\quad \leq 2\sum_{j=s}^{s+\ell-1}\sum_{r=j+1}^{s+\ell-1}\p\left(Q^{(0)}_{q,n,j}\cap \{X_r>u_{n,r}\}\right)+\int_0^{\infty}y\e^{-yx}\delta_{n,s,\ell}(x/a_n)dx
\end{align*}
\end{corollaryP}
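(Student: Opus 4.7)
The plan is to recast the Laplace transform as an integral of the distribution function of $\aa_{n,s}^{s+\ell}$ and then feed in the pointwise approximations furnished by Lemmas \ref{Lem:disc-ring}, \ref{lem:no-entrances-ring} and \ref{lem:entrances-ball-depth}. Since each of the three mark functions in \eqref{eq:mark-type} is non-negative, so is $\aa_{n,s}^{s+\ell}$, and the elementary identity $\e^{-yZ}=\int_{0}^{\infty}y\e^{-yx}\I_{\{Z\le x\}}\,dx$ (valid for $Z\ge 0$, $y>0$) combined with Fubini gives
\[
\E\!\left(\e^{-y a_n \aa_{n,s}^{s+\ell}}\right)=\int_{0}^{\infty}y\e^{-yx}\,\p\!\left(\aa_{n,s}^{s+\ell}\le x/a_n\right)\,dx.
\]
Using the trivial identity $\int_{0}^{\infty}y\e^{-yx}\,dx=1$, the constant $1$ sitting inside the absolute value in the conclusion admits the same integral representation; the case $y=0$ is trivial since both sides vanish. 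So it suffices to estimate the pointwise-in-$x$ difference between $\p(\aa_{n,s}^{s+\ell}\le x/a_n)$ and $1-\sum_{i=s}^{s+\ell-1}\p(A_{n,i}(x/a_n))$ and integrate against $y\e^{-yx}$.

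The pointwise estimate is obtained by three triangle-inequality steps, each an application of a preceding lemma at threshold $x/a_n$. First, Lemma \ref{lem:entrances-ball-depth} compares $\p(\aa_{n,s}^{s+\ell}\le x/a_n)$ with $\p(\WW_{s,\ell}(B_{n,i}(x/a_n)))$, contributing the first two summands of $\delta_{n,s,\ell}(x/a_n)$ (cf.\ \eqref{eq:delta-definition}) together with one copy of $\sum_{j,r}\p(Q^{(0)}_{q,n,j}\cap\{X_r>u_{n,r}\})$. Second, Lemma \ref{Lem:disc-ring} replaces $B_{n,i}(x/a_n)$ by $A_{n,i}(x/a_n)$ inside $\WW_{s,\ell}$ at a cost of exactly the third summand $\sum_{j=1}^{q}\p(B_{n,s+\ell-j}(x/a_n))$ of $\delta_{n,s,\ell}(x/a_n)$; this step is vacuous when $q=0$. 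Third, Lemma \ref{lem:no-entrances-ring} compares $\p(\WW_{s,\ell}(A_{n,i}(x/a_n)))$ with $1-\sum_{i=s}^{s+\ell-1}\p(A_{n,i}(x/a_n))$, producing a second copy of $\sum_{j,r}\p(Q^{(0)}_{q,n,j}\cap\{X_r>u_{n,r}\})$; these two copies together account for the factor $2$ in the statement.

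To finish, I would multiply the resulting pointwise bound by $y\e^{-yx}$, integrate over $x\in[0,\infty)$, and exchange the finite sum over $j$ with the integral via Fubini. The double sum $\sum_{j,r}\p(Q^{(0)}_{q,n,j}\cap\{X_r>u_{n,r}\})$ is independent of $x$, hence survives multiplication by $\int_0^\infty y\e^{-yx}\,dx=1$ while preserving the factor $2$; the $\delta$-contribution yields the integral claimed on the right; and the $\sum_{i}\p(A_{n,i}(x/a_n))$ gives the $\sum_j\int_0^\infty y\e^{-yx}\p(A_{n,j}(x/a_n))\,dx$. A last appeal to the standing assumption $\p(A_{n,j}(x/a_{n,j}))=\p(A_{n,j}(x/a_n))$ converts the integrand into the form displayed in the statement. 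The only subtle point is the combinatorial bookkeeping that glues the three lemmas so that the $R^{(\kappa)}$- and $B$-type terms reassemble exactly into $\delta_{n,s,\ell}(x/a_n)$ and each $\sum_{j,r}\p(Q^{(0)}_{q,n,j}\cap\{X_r>u_{n,r}\})$ is counted twice; the integrability of $\delta_{n,s,\ell}(\cdot/a_n)$ on $\R^+$, needed for the right-hand integral to be finite, is exactly the hypothesis in condition $U\!LC_q(u_{n,i})$, which explains the `$n$ sufficiently large' qualifier in the statement.
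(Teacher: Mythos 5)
Your proposal is correct and follows essentially the same route as the paper: the same pointwise-in-$x$ estimate assembled from Lemmas \ref{lem:entrances-ball-depth}, \ref{Lem:disc-ring} and \ref{lem:no-entrances-ring} (yielding exactly $2\sum_{j,r}\p(Q^{(0)}_{q,n,j}\cap\{X_r>u_{n,r}\})+\delta_{n,s,\ell}(x/a_n)$), followed by integration against $y\e^{-yx}$ and the identification $\p(A_{n,j}(x/a_{n,j}))=\p(A_{n,j}(x/a_n))$. The only cosmetic difference is that you obtain the representation $\E(\e^{-ya_n\aa_{n,s}^{s+\ell}})=\int_0^\infty y\e^{-yx}\p(\aa_{n,s}^{s+\ell}\le x/a_n)\,dx$ via the layer-cake identity and Fubini rather than the paper's integration by parts.
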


\begin{proof}
Using Lemmas~\ref{Lem:disc-ring}-\ref{lem:entrances-ball-depth}, for every $x>0$ we have when $q>0$
\begin{align*}
\Bigg|\p&\big(\aa_{n,s}^{s+\ell}\leq x\big)-\left(1-\sum_{i=s}^{s+\ell-1}\p(A_{n,i}(x))\right)\Bigg|\leq \left|\p\big(\aa_{n,s}^{s+\ell}\leq x\big)-\p(\WW_{s,\ell}(B_{n,i}(x)))\right|\\
&\quad+\left|\p(\WW_{s,\ell}(B_{n,i}(x))-\p(\WW_{s,\ell}(A_{n,i}(x)))\right|+\left|\p(\WW_{s,\ell}(A_{n,i}(x)))-\left(1-\sum_{i=s}^{s+\ell-1}\p(A_{n,i}(x))\right)\right|\\
&\leq \sum_{j=s}^{s+\ell-1}\sum_{r=j+1}^{s+\ell-1}\p\left(Q^{(0)}_{q,n,j}\cap \{X_r>u_{n,r}\}\right)+\sum_{\kappa=1}^{\lfloor\ell/q\rfloor}\sum_{i=s+\ell-\kappa q}^{s+\ell-1}\p\left(R^{(\kappa)}_{n,i}(x)\right)+\sum_{i=s}^{s+\ell-1}\sum_{\kappa>\lfloor\ell/q\rfloor}\p\left(R^{(\kappa)}_{n,i}(x)\right)\\
&\quad+\sum_{i=1}^q \p(B_{n,s+\ell-i}(x))+\sum_{j=s}^{s+\ell-1}\sum_{r=j+1}^{s+\ell-1}\p\left(Q^{(0)}_{q,n,j}\cap \{X_r>u_{n,r}\}\right)\\
&=2\sum_{j=s}^{s+\ell-1}\sum_{r=j+1}^{s+\ell-1}\p\left(Q^{(0)}_{q,n,j}\cap \{X_r>u_{n,r}\}\right)+\delta_{n,s,\ell}(x)
\end{align*}

When $q=0$, we have
\begin{align*}
\Bigg|\p&\big(\aa_{n,s}^{s+\ell}\leq x\big)-\left(1-\sum_{i=s}^{s+\ell-1}\p(A_{n,i}(x))\right)\Bigg|\leq \left|\p\big(\aa_{n,s}^{s+\ell}\leq x\big)-\p(\WW_{s,\ell}(B_{n,i}(x)))\right|\\
&\quad+\left|\p(\WW_{s,\ell}(B_{n,i}(x))-\p(\WW_{s,\ell}(A_{n,i}(x)))\right|+\left|\p(\WW_{s,\ell}(A_{n,i}(x)))-\left(1-\sum_{i=s}^{s+\ell-1}\p(A_{n,i}(x))\right)\right|\\
&\leq \sum_{j=s}^{s+\ell-1}\sum_{r=j+1}^{s+\ell-1}\p(X_j>u_{n,j},X_r>u_{n,r})+\sum_{j=s}^{s+\ell-1}\sum_{r=j+1}^{s+\ell-1}\p\left(Q^{(0)}_{0,n,j}\cap \{X_r>u_{n,r}\}\right)\\
&=2\sum_{j=s}^{s+\ell-1}\sum_{r=j+1}^{s+\ell-1}\p\left(Q^{(0)}_{0,n,j}\cap \{X_r>u_{n,r}\}\right)+\delta_{n,s,\ell}(x)
\end{align*}

Since $\p(\aa_{n,s}^{s+\ell}<0)=0$, using integration by parts we have
\begin{align*}
\E&\left(\e^{-y a_n\aa_{n,s}^{s+\ell}}\right)=\e^{-y.0}\p(\aa_{n,s}^{s+\ell}=0)+\int_0^{\infty} \e^{-yx} d\p(\aa_{n,s}^{s+\ell}\leq x/a_n)\\
&=\p(\aa_{n,s}^{s+\ell}=0)+\lim_{x\rightarrow\infty}\left[\e^{-yx}\p(\aa_{n,s}^{s+\ell}\leq x/a_n)-\e^{-y.0}\p(\aa_{n,s}^{s+\ell}\leq 0)\right]-\int_0^{\infty}\p(\aa_{n,s}^{s+\ell}\leq x/a_n) d\e^{-yx}\\
&=\p(\aa_{n,s}^{s+\ell}=0)-\p(\aa_{n,s}^{s+\ell}\leq 0)-\int_0^{\infty}(-y\e^{-yx})\p(\aa_{n,s}^{s+\ell}\leq x/a_n)dx\\
&=\int_0^{\infty}y\e^{-yx}\p(\aa_{n,s}^{s+\ell}\leq x/a_n)dx
\end{align*}
Then, using the assumption that $\p(A_{n,j}(x/a_{n,j}))=\p(A_{n,j}(x/a_n))$,
\begin{align*}
&\Bigg|\E\left(\e^{-y a_n\aa_{n,s}^{s+\ell}}\right)-\left(1-\sum_{j=s}^{s+\ell-1}\int_0^{\infty}y\e^{-yx}\p(A_{n,j}(x/a_{n,j}))dx\right)\Bigg|\\
&=\Bigg|\E\left(\e^{-y a_n\aa_{n,s}^{s+\ell}}\right)-\left(1-\sum_{j=s}^{s+\ell-1}\int_0^{\infty}y\e^{-yx}\p(A_{n,j}(x/a_n))dx\right)\Bigg|\\
&=\left|\int_0^{\infty}y\e^{-yx}\p(\aa_{n,s}^{s+\ell}\leq x/a_n)dx-\int_0^{\infty}y\e^{-yx}\left(1-\sum_{j=s}^{s+\ell-1}\int_0^{\infty}y\e^{-yx}\p(A_{n,j}(x/a_n))\right)dx\right|\\
&\leq \int_0^{\infty} y\e^{-yx}\left[
2\sum_{j=s}^{s+\ell-1}\sum_{r=j+1}^{s+\ell-1}\p\left(Q^{(0)}_{q,n,j}\cap \{X_r>u_{n,r}\}\right)+\delta_{n,s,\ell}(x/a_n)\right]dx\\
&=2\sum_{j=s}^{s+\ell-1}\sum_{r=j+1}^{s+\ell-1}\p\left(Q^{(0)}_{q,n,j}\cap \{X_r>u_{n,r}\}\right)+\int_0^{\infty}y\e^{-yx}\delta_{n,s,\ell}(x/a_n)dx
\end{align*}
\end{proof}

Next result gives the main induction tool to build the proof of Theorem~\ref{thm:convergence}.
\begin{lemma}
\label{prop:main-step}
Let $s,\ell,t,\varsigma\in\N$ and consider $x_1\in\R^+_0$, $\underline{x}=(x_2,\ldots,x_\varsigma)\in (\R^+_0)^{\varsigma-1}$, $s+\ell-1+t<a_2<b_2<a_3<\ldots<b_{\varsigma-1}<a_\varsigma<b_\varsigma\in\N_0$. For $n$ sufficiently large we have
\begin{align*}
\big|\p(\aa_{n,s}^{s+\ell}\leq x_1, \aa_{n,a_2}^{b_2}&\leq x_2, \ldots, \aa_{n,a_\varsigma}^{b_\varsigma}\leq x_\varsigma)-\p(\aa_{n,s}^{s+\ell}\leq x_1)\p(\aa_{n,a_2}^{b_2}\leq x_2, \ldots, \aa_{n,a_\varsigma}^{b_\varsigma}\leq x_\varsigma)\big|\\&\hspace{0.4cm} \leq \ell\iota(n,t)+4\sum_{j=s}^{s+\ell-1}\sum_{r=j+1}^{s+\ell-1}\p\left(Q^{(0)}_{q,n,j}\cap\{X_r>u_{n,r}\}\right)+2\delta_{n,s,\ell}(x_1)
\end{align*}
where \begin{equation}
\label{eq:def-iota}
\iota(n,t)=\sup_{s,\ell\in\N}\max_{i=s,\ldots,s+\ell-1}\left\{\left|\p(A_{n,i}(x_1))\p\big(\cap_{j=2}^\varsigma \{\aa_{n,a_j}^{b_j}\leq x_j\}\big)-\p\big(\cap_{j=2}^\varsigma \{\aa_{n,a_j}^{b_j}\leq x_j\}\cap A_{n,i}(x_1)\big)\right|\right\}.
\end{equation}

\end{lemma}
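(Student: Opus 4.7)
I would decouple $\{\aa_{n,s}^{s+\ell}\le x_1\}$ from the far event $C:=\bigcap_{j=2}^\varsigma\{\aa_{n,a_j}^{b_j}\le x_j\}$ by first replacing the complicated event $\{\aa_{n,s}^{s+\ell}\le x_1\}$ with a simple linear functional of the probabilities $\p(A_{n,i}(x_1))$, and only then invoking the mixing condition through $\iota(n,t)$.

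My first step is to observe that Lemmas~\ref{Lem:disc-ring}, \ref{lem:no-entrances-ring} and \ref{lem:entrances-ball-depth} are all proved via explicit set-theoretic inclusions on symmetric differences of the events involved. Consequently, intersecting every event in those three statements with the fixed event $C$ preserves the bounds, since $\p((E_1\cap C)\triangle(E_2\cap C))\le\p(E_1\triangle E_2)$. Chaining the three inequalities in both the unconditional form and in the form intersected with $C$, I obtain simultaneously
\[
\Bigl|\p(\aa_{n,s}^{s+\ell}\le x_1)-\Bigl(1-\sum_{i=s}^{s+\ell-1}\p(A_{n,i}(x_1))\Bigr)\Bigr|\le 2\sum_{j=s}^{s+\ell-1}\sum_{r=j+1}^{s+\ell-1}\p\bigl(Q^{(0)}_{q,n,j}\cap\{X_r>u_{n,r}\}\bigr)+\delta_{n,s,\ell}(x_1),
\]
and the same bound for $\bigl|\p(\{\aa_{n,s}^{s+\ell}\le x_1\}\cap C)-\p(C)+\sum_{i=s}^{s+\ell-1}\p(A_{n,i}(x_1)\cap C)\bigr|$; this is exactly the computation carried out for Corollary~\ref{cor:exponential}, now tracked with the extra factor $C$.

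Next, since every $i\in\{s,\ldots,s+\ell-1\}$ satisfies $a_2>s+\ell-1+t\ge i+t$, the definition of $\iota(n,t)$ in \eqref{eq:def-iota} yields $|\p(A_{n,i}(x_1)\cap C)-\p(A_{n,i}(x_1))\p(C)|\le\iota(n,t)$ uniformly in $i$, and summing over the $\ell$ indices gives
\[
\Bigl|\sum_{i=s}^{s+\ell-1}\p(A_{n,i}(x_1)\cap C)-\p(C)\sum_{i=s}^{s+\ell-1}\p(A_{n,i}(x_1))\Bigr|\le\ell\,\iota(n,t).
\]
Finally, I would combine the three displays. Writing $S=\sum_{i=s}^{s+\ell-1}\p(A_{n,i}(x_1))$, the first display gives $\p(\{\aa_{n,s}^{s+\ell}\le x_1\}\cap C)=\p(C)-\sum_i\p(A_{n,i}(x_1)\cap C)+E_1$; the mixing step turns that sum into $\p(C)\,S+E_2$ with $|E_2|\le\ell\iota(n,t)$; and the unconditional display writes $1-S=\p(\aa_{n,s}^{s+\ell}\le x_1)+E_3$. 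Substituting and using $\p(C)\le 1$ produces
\[
\p(\{\aa_{n,s}^{s+\ell}\le x_1\}\cap C)=\p(\aa_{n,s}^{s+\ell}\le x_1)\,\p(C)+E_1+E_2-\p(C)\,E_3,
\]
whose error is at most $\ell\iota(n,t)+4\sum_{j,r}\p(Q^{(0)}_{q,n,j}\cap\{X_r>u_{n,r}\})+2\delta_{n,s,\ell}(x_1)$, exactly as claimed. The main obstacle I anticipate is bookkeeping: I have to verify that each inclusion in the proofs of Lemmas~\ref{Lem:disc-ring}--\ref{lem:entrances-ball-depth} is a genuine set-theoretic inclusion (not merely an inequality of probabilities), so that its validity is preserved after intersection with $C$; once this is in hand, the remaining step is purely algebraic counting of the three error contributions.
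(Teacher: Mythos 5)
Your proposal is correct and follows essentially the same route as the paper's proof: both exploit that the error bounds in Lemmas~\ref{Lem:disc-ring}--\ref{lem:entrances-ball-depth} come from set-theoretic inclusions and therefore survive intersection with the far event $C=D^{\underline{x}}$, reduce both $\p(\{\aa_{n,s}^{s+\ell}\le x_1\}\cap C)$ and $\p(\aa_{n,s}^{s+\ell}\le x_1)$ to the linear functional $1-\sum_{i}\p(A_{n,i}(x_1))$ (the paper does this via the intermediate events $\tilde A_{x_1,\underline{x}}$ and $\tilde B_{x_1}$), and then apply $\iota(n,t)$ termwise to decouple $A_{n,i}(x_1)$ from $C$. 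The resulting error count ($\ell\iota+4\Sigma+2\delta$) is identical to the paper's.
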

\begin{proof}
Let
\begin{align*}
A_{x_1,\underline{x}}&:=\{\aa_{n,s}^{s+\ell}\leq x_1, \aa_{n,a_2}^{b_2}\leq x_2, \ldots, \aa_{n,a_\varsigma}^{b_\varsigma}\leq x_\varsigma\},\;\;\qquad B_{x_1}:=\{\aa_{n,s}^{s+\ell}\leq x_1\}\\
\tilde A_{x_1,\underline{x}} &:=\WW_{s,\ell}(A_{n,i}(x_1))\cap\{ \aa_{n,a_2}^{b_2}\leq x_2, \ldots, \aa_{n,a_\varsigma}^{b_\varsigma}\leq x_\varsigma\},\quad
\tilde B_{x_1}:=\WW_{s,\ell}(A_{n,i}(x_1)),\\
D^{\underline{x}}&:=\{\aa_{n,a_2}^{b_2}\leq x_2, \ldots, \aa_{n,a_\varsigma}^{b_\varsigma}\leq x_\varsigma\}.
\end{align*}

If $x_1>0$, by Lemmas~\ref{Lem:disc-ring} and \ref{lem:entrances-ball-depth} we have
\begin{align}
\label{eq:approx1}
\left|\p(B_{x_1})-\p(\tilde B_{x_1})\right|\nonumber &\leq \left|\p(\aa_{n,s}^{s+\ell}\leq x_1)-\p(\WW_{s,\ell}(B_{n,i}(x_1)))\right|+\left|\p(\WW_{s,\ell}(B_{n,i}(x_1)))-\p(\WW_{s,\ell}(A_{n,i}(x_1)))\right|\nonumber\\
&\leq \left|\p(\{\aa_{n,s}^{s+\ell}\leq x_1\}\triangle \WW_{s,\ell}(B_{n,i}(x_1)))\right|+\left|\p(\WW_{s,\ell}(A_{n,i}(x_1))\setminus\WW_{s,\ell}(B_{n,i}(x_1)))\right|\nonumber\\
&\leq \sum_{j=s}^{s+\ell-1}\sum_{r=j+1}^{s+\ell-1}\p\left(Q^{(0)}_{q,n,j}\cap \{X_r>u_{n,r}\}\right)+\sum_{\kappa=1}^{\lfloor\ell/q\rfloor}\sum_{i=s+\ell-\kappa q}^{s+\ell-1}\p\left(R^{(\kappa)}_{n,i}(x_1)\right)\nonumber\\
&\quad+\sum_{i=s}^{s+\ell-1}\sum_{\kappa>\lfloor\ell/q\rfloor}\p\left(R^{(\kappa)}_{n,i}(x_1)\right)+\sum_{i=1}^q \p(B_{n,s+\ell-i}(x_1))\nonumber\\
&=\sum_{j=s}^{s+\ell-1}\sum_{r=j+1}^{s+\ell-1}\p\left(Q^{(0)}_{q,n,j}\cap \{X_r>u_{n,r}\}\right)+\delta_{n,s,\ell}(x_1)
\end{align}
and also
{
\fontsize{10}{10}\selectfont
\begin{align}
\label{eq:approx2}
\left|\p(A_{x_1})-\p(\tilde A_{x_1})\right|\nonumber &\leq \left|\p(\{\aa_{n,s}^{s+\ell}\leq x_1\}\cap D^{\underline{x}})-\p(\WW_{s,\ell}(B_{n,i}(x_1))\cap D^{\underline{x}})\right|+\left|\p\left((\WW_{s,\ell}(A_{n,i}(x_1))\setminus\WW_{s,\ell}(B_{n,i}(x_1)))\cap D^{\underline{x}}\right)\right|\nonumber\\
&\leq \left|\p\left((\{\aa_{n,s}^{s+\ell}\leq x_1\}\triangle \WW_{s,\ell}(B_{n,i}(x_1))\cap D^{\underline{x}}\right)\right|+\left|\p\left((\WW_{s,\ell}(A_{n,i}(x_1))\setminus\WW_{s,\ell}(B_{n,i}(x_1)))\cap D^{\underline{x}}\right)\right|\nonumber\\
&\leq \left|\p(\{\aa_{n,s}^{s+\ell}\leq x_1\}\triangle\WW_{s,\ell}(B_{n,i}(x_1))\right|+\left|\p(\WW_{s,\ell}(A_{n,i}(x_1))\setminus\WW_{s,\ell}(B_{n,i}(x_1)))\right|\nonumber\\
&\leq \sum_{j=s}^{s+\ell-1}\sum_{r=j+1}^{s+\ell-1}\p\left(Q^{(0)}_{q,n,j}\cap \{X_r>u_{n,r}\}\right)+\delta_{n,s,\ell}(x_1)
\end{align}
}
If $x_1=0$, we notice that $\{\aa_{n,s}^{s+\ell}\leq x_1\}=\{\aa_{n,s}^{s+\ell}=0\}=\{X_s\leq u_{n,s},\ldots,X_{s+\ell-1}\leq u_{n,s+\ell-1}\}=\WW_{s,\ell}(B_{n,i}(0))$, so estimates \eqref{eq:approx1} and \eqref{eq:approx2} are still valid by Lemma~\ref{Lem:disc-ring}.

Adapting the proof of Lemma~\ref{lem:no-entrances-ring}, it follows that

$\left|\p(\tilde A_{x_1,\underline{x}})-\left(1-\sum_{i=s}^{s+\ell-1}\p(A_{n,i}(x))\right)\p(D^{\underline x})\right|\leq Err,$ where

\[Err=\left|\sum_{i=s}^{s+\ell-1}\p(A_{n,i}(x))\p(D^{\underline{x}})-\sum_{i=s}^{s+\ell-1}\p(A_{n,i}(x)\cap D^{\underline{x}})\right|+\sum_{j=s}^{s+\ell-1}\sum_{r=j+1}^{s+\ell-1}\p\left(Q^{(0)}_{q,n,j}\cap\{X_r>u_{n,r}\}\right)\]

Now, since, by definition of $\iota(n,t)$,
\begin{multline*}
\left|\sum_{i=s}^{s+\ell-1}\p(A_{n,i}(x))\p(D^{\underline{x}})-\sum_{i=s}^{s+\ell-1}\p(A_{n,i}(x)\cap D^{\underline{x}})\right|\\
\leq\sum_{i=s}^{s+\ell-1}\left|\p(A_{n,i}(x))\p(D^{\underline{x}})-\p(A_{n,i}(x)\cap D^{\underline{x}})\right|\leq \ell\iota(n,t),
\end{multline*}

we conclude that
\begin{equation}
\label{eq:approx3}
\left|\p(\tilde A_{x_1,\underline{x}})-\left(1-\sum_{i=s}^{s+\ell-1}\p(A_{n,i}(x))\right)\p(D^{\underline x})\right|\leq \ell\iota(n,t)+\sum_{j=s}^{s+\ell-1}\sum_{r=j+1}^{s+\ell-1}\p\left(Q^{(0)}_{q,n,j}\cap\{X_r>u_{n,r}\}\right)
\end{equation}

Also, by Lemma~\ref{lem:no-entrances-ring} we have
\begin{equation}
\label{eq:approx4}
\left|\p(\tilde B_{x_1})\p(D^{\underline x})-\left(1-\sum_{i=s}^{s+\ell-1}\p(A_{n,i}(x_1))\right)\p(D^{\underline x})\right|\leq \sum_{j=s}^{s+\ell-1}\sum_{r=j+1}^{s+\ell-1}\p\left(Q^{(0)}_{q,n,j}\cap\{X_r>u_{n,r}\}\right)
\end{equation}

Putting together the estimates \eqref{eq:approx1}-\eqref{eq:approx4} we get
\begin{align*}
|\p( & A_{x_1,\underline{x}})-\p(B_{x_1})\p(D^{\underline x})|\leq \left|\p( A_{x_1,\underline{x}})-\p( \tilde A_{x_1,\underline{x}})\right|+\left|\p(\tilde A_{x_1,\underline{x}})-\left(1-\sum_{i=s}^{s+\ell-1}\p(A_{n,i}(x_1))\right)\p(D^{\underline x})\right|\\
&\hspace{2cm}+\left|\p(\tilde B_{x_1})\p(D^{\underline x})-\left(1-\sum_{i=s}^{s+\ell-1}\p(A_{n,i}(x_1))\right)\p(D^{\underline x})\right|+\left|\p(B_{x_1})-\p(\tilde B_{x_1})\right|\p(D^{\underline x})\\
&\hspace{2cm}\leq \ell\iota(n,t)+4\sum_{j=s}^{s+\ell-1}\sum_{r=j+1}^{s+\ell-1}\p\left(Q^{(0)}_{q,n,j}\cap\{X_r>u_{n,r}\}\right)+2\delta_{n,s,\ell}(x_1)
\end{align*}
\end{proof}

Let us consider a function $F:(\R_0^+)^n \to \R$ which is continuous on the right in each variable separately and such that for each $R=(a_1,b_1]\times\ldots\times(a_n,b_n]\subset(\R_0^+)^n$ we have
\[\mu_F(R):=\sum_{c_i\in\{a_i,b_i\}}(-1)^{\#\{i\in\{1,\ldots,n\}:c_i=a_i\}}F(c_1,\ldots,c_n)\geq 0\]
Such $F$ is called an \emph{$n$-dimensional Stieltjes measure function} and such $\mu_F$ has a unique extension to the Borel $\sigma$-algebra in $(\R_0^+)^n$, which is called the \emph{Lebesgue-Stieltjes measure} associated to $F$.

For each $I\subset\{1,\ldots,n\}$, let $F_I(\underline{x}):=F(\delta_1 x_1,\ldots,\delta_n x_n)$, where $\delta_i=
\begin{cases}
1 & \text{if $i\in I$}\\
0 & \text{if $i\notin I$}
\end{cases}$

If $F$ is an $n$-dimensional Stieltjes measure function, it is easy to see that $F_I$ is also an $n$-dimensional Stieltjes measure function, which has an associated Lebesgue-Stieltjes measure $\mu_{F_I}$. We will use the following proposition, proved in \cite[Section~4]{FFM18}:

\begin{propositionP}
\label{prop:Laplace}
Given $n\in\N$, $I\subset\{1,\ldots,n\}$ and two functions $F,G:(\R_0^+)^n \to \R$ such that $F$ is a bounded $n$-dimensional Stieltjes measure function, let
\[\int G(\underline{x})dF_I(\underline{x}):=
\begin{cases}
G(0,\ldots,0)F(0,\ldots,0) & \text{for $I=\emptyset$}\\
\int G(\underline{x})d\mu_{F_I} & \text{for $I\neq\emptyset$}
\end{cases}\]
where $\mu_{F_I}$ is the Lebesgue-Stieltjes measure associated to $F_I$. Then,
{
\fontsize{10}{10}\selectfont
\[\int_0^\infty\!\!\!\!\!\!\ldots\!\!\int_0^\infty \e^{-y_1 x_1-\ldots-y_n x_n}F(\underline{x})dx_1\ldots dx_n=\frac{1}{y_1\ldots y_n}\sum_{I\subset\{1,\ldots,n\}}\int \e^{-\sum_{i\in I} y_i x_i}dF_I(\underline{x})\]
}
\end{propositionP}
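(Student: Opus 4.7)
My plan is to establish the identity by induction on $n$, with iterated one-dimensional integration by parts as the driving mechanism. The key observation is that each IBP in a variable $x_i$ produces a dichotomy: a boundary contribution at $x_i=0$, which effectively collapses the $i$-th coordinate to zero and removes $i$ from the index set, versus a remaining integral against the Stieltjes measure $d_i F$ in the $i$-th slot, which keeps $i$ active. Iterating through all $n$ variables produces $2^n$ terms naturally indexed by the subsets $I\subset\{1,\ldots,n\}$, and pulling out the common factor $1/(y_1\cdots y_n)$ recovers the right hand side.

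For the base case $n=1$, I integrate by parts with $u=F(x_1)$ and $dv=\e^{-y_1 x_1}\,dx_1$, so that $du=dF$ and $v=-\e^{-y_1 x_1}/y_1$. The boundary term at $+\infty$ vanishes because $F$ is bounded and $y_1>0$, while the boundary at $0$ supplies $F(0)/y_1$, matching the $I=\emptyset$ term via the convention $\int G\,dF_\emptyset=G(0)F(0)$ with $G\equiv 1$. The remaining Stieltjes integral $(1/y_1)\int\e^{-y_1 x_1}\,dF(x_1)$ is precisely the $I=\{1\}$ contribution.

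For the inductive step, I use Fubini (justified by boundedness of $F$ and positivity of the exponential kernel) to peel off the outermost integral in $x_n$. Applying the induction hypothesis to the inner $(n-1)$-fold integral at each fixed $x_n$, using that $F(\,\cdot\,,\ldots,\,\cdot\,,x_n)$ is again a bounded $(n-1)$-dimensional Stieltjes measure function, expresses the inner integral as a sum over $J\subset\{1,\ldots,n-1\}$ of Stieltjes integrals. I then apply 1D integration by parts in $x_n$ to each such term, splitting it into a boundary piece at $x_n=0$, which contributes to the subset $I=J$, and a piece involving $d_n F$, which contributes to $I=J\cup\{n\}$. Summing over $J$ together with the two choices for $n$ yields the full sum over $I\subset\{1,\ldots,n\}$, with the combined prefactor $1/(y_1\cdots y_n)$.

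The main technical obstacle is the measure-theoretic bookkeeping: one must identify the iterated 1D Stieltjes integrals produced along the way with the single integrals against $\mu_{F_I}$ appearing in the statement. This requires a disintegration-style decomposition of $\mu_F$ into iterated 1D Stieltjes measures along coordinate axes, together with the observation that evaluating $F$ on the hyperplanes $\{x_i=0: i\notin I\}$ yields exactly $F_I$ by definition. Both are standard facts for bounded Stieltjes measure functions (and can be verified by first checking the identity when $F$ is a product of 1D Stieltjes functions or an indicator of an upper orthant, and then extending by linearity and a monotone class argument). Once in place, the combinatorial structure described above clicks together to yield the identity.
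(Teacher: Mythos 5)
The paper does not actually prove this proposition: it is quoted as ``proved in \cite[Section~4]{FFM18}'', so there is no internal proof to compare against. Judged on its own, your strategy --- Fubini to peel off one variable, one-dimensional integration by parts in each coordinate, and induction on $n$, with the boundary term at $x_i=0$ accounting for $i\notin I$ and the residual Stieltjes term for $i\in I$ --- is the natural mechanism behind the identity, and your base case $n=1$ is correct (note that $y_i>0$ is needed both for the vanishing of the boundary term at infinity and for Fubini; the statement tacitly assumes this, and the application in Corollary~\ref{cor:fgm-main-estimate} reduces to that case).

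There is, however, one step in your induction that fails for the class of functions as literally defined. You assert that for fixed $x_n$ the slice $F(\cdot,\ldots,\cdot,x_n)$ is again a bounded $(n-1)$-dimensional Stieltjes measure function. The definition only constrains the $n$-dimensional mixed increments of $F$, and this says nothing about the $(n-1)$-dimensional increments of a slice: for instance $F(x_1,\ldots,x_n)=g(x_1)$ has all $n$-dimensional increments equal to zero for any bounded right-continuous $g$, yet its slices can fail to be monotone or even to have bounded variation, in which case the inner Stieltjes integrals you invoke are undefined. (For the same reason the proposition, read literally with $\mu_{F_I}$ an $n$-dimensional measure, degenerates; the intended reading is that $\int\cdot\,dF_I$ is the $|I|$-dimensional Lebesgue--Stieltjes integral in the coordinates of $I$, the others being frozen at $0$.) The gap is harmless in context, because the only functions to which the proposition is applied, namely $F^{(A)}$, $F^{(B)}$ and $F^{(D)}$ in Corollary~\ref{cor:fgm-main-estimate}, are joint distribution functions of nonnegative random vectors, for which every slice and every $F_I$ is again a (sub-probability) distribution function. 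So you should either carry that stronger hypothesis through the induction, or formulate the inductive statement for functions of bounded variation in the Hardy--Krause sense so that all iterated signed Stieltjes integrals exist. With that repair, and with the disintegration step you flag (identifying the iterated one-dimensional Stieltjes integrals with $\mu_{F_I}$, which for distribution functions is standard and can be established by the product-plus-monotone-class argument you describe), your proof goes through.
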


\begin{corollaryP}
\label{cor:fgm-main-estimate}
Let $s,\ell,t,\varsigma\in\N$ and consider $y_1,y_2,\ldots,y_\varsigma\in\R_0^+$, $s+\ell-1+t<a_2<b_2<a_3<\ldots<b_{\varsigma-1}<a_\varsigma<b_\varsigma\in\N_0$. For $n$ sufficiently large we have
\[\E\left(\e^{-y_1 a_n\aa_{n,s}^{s+\ell}-y_2 a_n\aa_{n,a_2}^{b_2}-\ldots-y_\varsigma a_n\aa_{n,a_\varsigma}^{b_\varsigma}}\right)=\E\left(\e^{-y_1 a_n\aa_{n,s}^{s+\ell}}\right)\E\left(\e^{-y_2 a_n\aa_{n,a_2}^{b_2}-\ldots-y_\varsigma a_n\aa_{n,a_\varsigma}^{b_\varsigma}}\right)+Err\]
where
\[|Err|\leq \ell\iota(n,t)+4\sum_{j=s}^{s+\ell-1}\sum_{r=j+1}^{s+\ell-1}\p\left(Q^{(0)}_{q,n,j}\cap\{X_r>u_{n,r}\}\right)+2\int_0^{\infty}y_1\e^{-y_1 x}\delta_{n,s,\ell}(x/a_n)dx\]
and $\iota(n,t)$ is given by \eqref{eq:def-iota}.
\end{corollaryP}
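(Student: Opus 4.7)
The plan is to extend the one-variable integration-by-parts identity derived in the proof of Corollary~\ref{cor:exponential} to the $\varsigma$-variable setting, apply Lemma~\ref{prop:main-step} pointwise in the resulting integrand, and integrate out. Denote the joint and product distributions
\[
F_n(\underline{x}):=\p\bigl(\aa_{n,s}^{s+\ell}\le x_1,\,\aa_{n,a_2}^{b_2}\le x_2,\,\ldots,\,\aa_{n,a_\varsigma}^{b_\varsigma}\le x_\varsigma\bigr),
\]
\[
G_n(\underline{x}):=\p\bigl(\aa_{n,s}^{s+\ell}\le x_1\bigr)\,\p\bigl(\aa_{n,a_2}^{b_2}\le x_2,\,\ldots,\,\aa_{n,a_\varsigma}^{b_\varsigma}\le x_\varsigma\bigr).
\]
First I would establish the representation
\[
\E\bigl(\e^{-y_1 a_n\aa_{n,s}^{s+\ell}-\ldots-y_\varsigma a_n\aa_{n,a_\varsigma}^{b_\varsigma}}\bigr)=\int_0^\infty\!\cdots\!\int_0^\infty y_1\cdots y_\varsigma\,\e^{-\sum_i y_i x_i}\,F_n(\underline{x}/a_n)\,dx_1\cdots dx_\varsigma,
\]
and the analogous identity with $F_n$ replaced by $G_n$, which represents the product of the two factor Laplace transforms on the right-hand side of the claim. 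This multivariate analogue of the one-variable identity at the end of the proof of Corollary~\ref{cor:exponential} can be obtained by iterating the single-variable integration by parts, or more transparently by interpreting $\prod_i y_i\e^{-y_i x_i}\,dx_i$ as the joint density of $\varsigma$ independent exponentials of rates $y_i$ and applying the tower property. Proposition~\ref{prop:Laplace} is the rigorous tool to handle potential atoms of the joint distribution at boundary coordinates.

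Subtracting the two representations, the claimed error $Err$ becomes
\[
\int_{(\R_0^+)^\varsigma}\prod_{i=1}^\varsigma y_i\e^{-y_i x_i}\bigl(F_n(\underline{x}/a_n)-G_n(\underline{x}/a_n)\bigr)\,dx_1\cdots dx_\varsigma.
\]
Applying Lemma~\ref{prop:main-step} pointwise at the argument $(x_1/a_n,\ldots,x_\varsigma/a_n)$ bounds the integrand in absolute value by
\[
\ell\,\iota(n,t)+4\sum_{j=s}^{s+\ell-1}\sum_{r=j+1}^{s+\ell-1}\p\bigl(Q^{(0)}_{q,n,j}\cap\{X_r>u_{n,r}\}\bigr)+2\,\delta_{n,s,\ell}(x_1/a_n).
\]

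The first two summands do not depend on $\underline{x}$, and $\prod_i y_i\e^{-y_i x_i}$ integrates to $1$ on $(\R_0^+)^\varsigma$, so these terms pass through the integral unchanged. The third summand depends only on $x_1$; integrating out $x_2,\ldots,x_\varsigma$ produces a factor of $1$ from each coordinate and leaves $2\int_0^\infty y_1\e^{-y_1 x_1}\delta_{n,s,\ell}(x_1/a_n)\,dx_1$, which is precisely the third term in the stated bound.

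The routine bookkeeping of the single-variable case is already in place from Corollary~\ref{cor:exponential}; the main technical delicacy I anticipate is the multivariate integration-by-parts identity in the presence of atoms of the joint distribution at the origin in one or more coordinates. This is exactly where Proposition~\ref{prop:Laplace} is decisive, serving the role analogous to the boundary-term manipulation that handles $\p(\aa_{n,s}^{s+\ell}=0)$ at the start of the second displayed calculation in the proof of Corollary~\ref{cor:exponential}.
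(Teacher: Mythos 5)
Your proposal is correct and follows essentially the same route as the paper: both express the difference of joint Laplace transforms as $\prod_i y_i\e^{-y_ix_i}$ integrated against the difference of the joint and product distribution functions (with Proposition~\ref{prop:Laplace} handling the atoms), then apply Lemma~\ref{prop:main-step} pointwise and integrate, noting that the $\delta_{n,s,\ell}$ term depends only on $x_1$.
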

\begin{proof}
Using the same notation as in the proof of Lemma~\ref{prop:main-step}, let $F^{(A)}(x_1,\ldots,x_\varsigma)=\p(A_{x_1,\underline{x}})$, $F^{(B)}(x_1)=\p(B_{x_1})$ and $F^{(D)}(x_2,\ldots,x_\varsigma)=\p(D^{\underline{x}})$. Then, $F^{(A)}$, $F^{(B)}$ and $F^{(D)}$ are both bounded Stieltjes measure functions, with
\begin{align*}
\mu_{F^{(A)}}(U_1)&=\p\left((a_n\aa_{n,s}^{s+\ell}, a_n\aa_{n,a_2}^{b_2},\ldots, a_n\aa_{n,a_\varsigma}^{b_\varsigma})\in U_1\right)\\
\mu_{F^{(B)}}(U_2)&=\p(a_n\aa_{n,s}^{s+\ell}\in U_2)\qquad
\mu_{F^{(D)}}(U_3)=\p\left(( a_n\aa_{n,a_2}^{b_2},\ldots, a_n\aa_{n,a_\varsigma}^{b_\varsigma})\in U_3\right)
\end{align*}
where $U_1$, $U_2$ and $U_3$ are Borel sets in $(\R_0^+)^\varsigma$, $\R_0^+$ and $(\R_0^+)^{\varsigma-1}$, respectively.

Therefore, we can apply the previous proposition and we obtain
{
\fontsize{10}{10}\selectfont
\begin{align*}
&\E\left(\e^{-y_1 a_n\aa_{n,s}^{s+\ell}-y_2 a_n\aa_{n,a_2}^{b_2}-\ldots-y_\varsigma a_n\aa_{n,a_\varsigma}^{b_\varsigma}}\right)-\E\left(\e^{-y_1 a_n\aa_{n,s}^{s+\ell}}\right)\E\left(\e^{- y_2 a_n\aa_{n,a_2}^{b_2}-\ldots-y_\varsigma a_n\aa_{n,a_\varsigma}^{b_\varsigma}}\right)\\
&=\sum_{I\subset\{1,\ldots,\varsigma\}}\int\e^{-\sum_{i\in I}y_i a_n x_i}d(F^{(A)})_I(x_1,\ldots,x_\varsigma)\\
&-\sum_{I\subset\{1\}}\int\e^{-\sum_{i\in I}y_i a_n x_i}d(F^{(B)})_I(x_1)\sum_{I\subset\{2,\ldots,\varsigma\}}\int\e^{-\sum_{i\in I}y_i a_n x_i}d(F^{(D)})_I(x_2,\ldots,x_\varsigma)\\
&=y_1\ldots y_{\varsigma}a_n^{\varsigma}\int_0^\infty\!\!\!\!\!\!\ldots\!\!\int_0^\infty\e^{-y_1 a_n x_1-\ldots-y_{\varsigma} a_n x_{\varsigma}}F^{(A)}(x_1,\ldots,x_\varsigma)dx_1\ldots dx_{\varsigma}\\
&-\left(y_1 a_n\int_0^\infty\e^{-y_1 a_n x_1}F^{(B)}(x_1)dx_1\right)\left(y_2\ldots y_{\varsigma}a_n^{\varsigma-1}\int_0^\infty\!\!\!\!\!\!\ldots\!\!\int_0^\infty\e^{-y_2 a_n x_2-\ldots-y_{\varsigma} a_n x_{\varsigma}}F^{(D)}(x_2,\ldots,x_\varsigma)dx_2\ldots dx_{\varsigma}\right)\\
&=y_1\ldots y_{\varsigma}a_n^{\varsigma}\int_0^\infty\!\!\!\!\!\!\ldots\!\!\int_0^\infty\e^{-y_1 a_n x_1-\ldots-y_{\varsigma} a_n x_{\varsigma}}(F^{(A)}-F^{(B)}F^{(D)})(x_1,\ldots,x_\varsigma)dx_1\ldots dx_{\varsigma}
\end{align*}
}
Hence, using Lemma~\ref{prop:main-step},
\begin{multline*}
\left|\E\left(\e^{-y_1 a_n\aa_{n,0}^s-y_2 a_n\aa_{n,a_2}^{b_2}-\ldots-y_\varsigma a_n\aa_{n,a_\varsigma}^{b_\varsigma})}\right)-\E\left(\e^{-y_1 a_n\aa_{n,0}^s}\right)\E\left(\e^{- y_2 a_n\aa_{n,a_2}^{b_2}-\ldots-y_\varsigma a_n\aa_{n,a_\varsigma}^{b_\varsigma}}\right)\right|\\
\leq y_1\ldots y_{\varsigma}a_n^{\varsigma}\int_0^\infty\!\!\!\!\!\!\ldots\!\!\int_0^\infty\e^{-y_1 a x_1-\ldots-y_{\varsigma} a x_{\varsigma}}\left|\p(A_{x_1,\underline{x}})-\p(B_{x_1})\p(D^{\underline{x}})\right|dx_1\ldots dx_{\varsigma}\\
\leq \ell\iota(n,t)+4\sum_{j=s}^{s+\ell-1}\sum_{r=j+1}^{s+\ell-1}\p\left(Q^{(0)}_{q,n,j}\cap\{X_r>u_{n,r}\}\right)+2y_1 a_n\int_0^\infty\e^{-y_1 a_n x_1}\delta_{n,s,\ell}(x_1)dx_1\\
=\ell\iota(n,t)+4\sum_{j=s}^{s+\ell-1}\sum_{r=j+1}^{s+\ell-1}\p\left(Q^{(0)}_{q,n,j}\cap\{X_r>u_{n,r}\}\right)+2\int_0^\infty y_1\e^{-y_1 x}\delta_{n,s,\ell}(x/a_n)dx
\end{multline*}
\end{proof}

\begin{propositionP}
\label{prop:Laplace-estimates}
Let $X_0,X_1,\ldots$ be given by \eqref{eq:def-stat-stoch-proc-DS}, let $J\in\RR$ be such that $J=\bigcup_{\ell=1}^\varsigma I_\ell$ where $I_j=[a_j,b_j)\in\S$, $j=1,\ldots,\varsigma$ and $a_1<b_1<a_2<\cdots<b_{\varsigma-1}<a_\varsigma<b_\varsigma$, let $u_{n,i}$ be real-valued boundaries satisfying \eqref{FmaxH} and \eqref{F_hn}, let $H:=\lceil\sup\{x:x\in J\}\rceil=\lceil b_\varsigma\rceil$ and let $(a_n)_{n\in\N}$ be a normalising sequence, $a_{n,j}$ normalising factors and $\pi$ a probability distribution as in \eqref{thm:convergence}. Assume that $\D_q(u_{n,i})^*$, $\D'_q(u_{n,i})^*$ and $U\!LC_q(u_{n,i})^*$ hold, for some $q\in\N_0$. Consider the partition of $[0,Hn)$ into blocks of length $\ell_{H,n,j}$, $J_1=[\L_{H,n,0},\L_{H,n,1})$, $J_2=[\L_{H,n,1},\L_{H,n,2})$, ..., $J_{k_n}=[\L_{H,n,k_n-1},\L_{H,n,k_n})$, $J_{k_n+1}=[\L_{H,n,k_n},Hn)$. Let $n$ be sufficiently large so that $L_{H,n}:=\max\{\ell_{H,n,j},j=1,\ldots,k_n\}<\frac{n}{2}\inf_{j\in \{1,\ldots,\varsigma\}}\{b_j-a_j\}$ and, finally, let $\mathscr S_\ell$ be the number of blocks $J_i,i>1$ contained in $n I_\ell$, that is,
\[\mathscr S_\ell:=\#\{i\in \{2,\ldots,k_n\}:J_i\subset n I_\ell\}.\]
Note that, by definition of $L_{H,n}$, we must have $\mathscr S_\ell>1$ for every $\ell\in \{1,\ldots,\varsigma\}$.

Then, for all $y_1,y_2,\ldots,y_\varsigma\in\R_0^+$, we have
\[\E\left(\e^{-\sum_{\ell=1}^\varsigma y_\ell a_n\aa_n(nI_\ell)}\right)-\prod_{\ell=1}^\varsigma\prod_{i=i_{\ell}}^{i_\ell+\mathscr S_\ell-1} \E\Big(\e^{-y_{\ell}a_n\aa_n(J_i)}\Big) \xrightarrow[n\to\infty]{}0\]
\end{propositionP}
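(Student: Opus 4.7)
The plan is to iteratively apply Corollary~\ref{cor:fgm-main-estimate} to factor the joint Laplace transform, peeling off one block $J_i$ at a time from the left. Since consecutive blocks $J_i$, $J_{i+1}$ in the partition are adjacent with no time gap between them, the mixing condition $\D_q(u_{n,i})^*$ cannot be invoked directly; the first task is to create artificial gaps. I introduce the \emph{working blocks} $J_i^*:=[\L_{H,n,i-1},\L_{H,n,i}-t_{H,n,i})$ and the \emph{tails} $G_i:=[\L_{H,n,i}-t_{H,n,i},\L_{H,n,i})$ furnished by the construction of Section~\ref{subsec:blocks}. Since $t_{H,n,i}>t_n^*$, two consecutive working blocks are separated by a gap of size at least $t_n^*$, which is precisely what $\D_q(u_{n,i})^*$ requires.

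The first reduction step is to approximate $\aa_n(nI_\ell)$ by $\sum_{J_i\subset nI_\ell}\aa_n(J_i^*)$. The discrepancy comes from three sources: the left boundary remnant of $nI_\ell$ before the first full block $J_{i_\ell}$, the right boundary remnant after $J_{i_\ell+\mathscr S_\ell-1}$, and the tails $G_i$ inside each block. By property (3) of $m_n$ in Remark~\ref{rem:general-marks}, each discrepancy vanishes unless some $X_j>u_{n,j}$ occurs in the excluded indices; by the block construction the expected number of such exceedances is $O(L_{H,n}\bar F_{n,\max}(H))+O(\eps(H,n)F_{H,n}^*)$, which tends to $0$ by \eqref{eq:kn-sequence} and since $\eps(H,n)\to 0$. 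A Markov-type bound on $|\e^{-y\aa_n(\cdot)}-\e^{-y\aa_n(\cdot')}|\le y|\aa_n(\cdot)-\aa_n(\cdot')|$ transfers this to the Laplace transform.

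With this in hand, I apply Corollary~\ref{cor:fgm-main-estimate} inductively to
\[
\E\Bigl(\exp\Bigl(-\sum_\ell y_\ell a_n\sum_{J_i\subset nI_\ell}\aa_n(J_i^*)\Bigr)\Bigr),
\]
at each step peeling off the leftmost remaining working block, using its trailing tail $G_i$ of size $>t_n^*$ to provide the separation required by the corollary. The per-step errors are (i) the mixing term $\ell_{H,n,i}\iota(n,t_n^*)$, (ii) the short-return term $\sum_{j,r\in J_i^*}\p(Q^{(0)}_{q,n,j}\cap\{X_r>u_{n,r}\})$, and (iii) the integral $\int y_\ell\e^{-y_\ell x}\delta_{n,\L_{H,n,i-1},\ell_{H,n,i}-t_{H,n,i}}(x/a_n)\,dx$. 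Summed over the at most $k_n$ blocks plus the final segment $[\L_{H,n,k_n},Hn)$, these totals all vanish: (i) is bounded by $Hn\cdot\max_i\gamma_i(n,t_n^*)\to 0$ via $\D_q(u_{n,i})^*$; (ii) is controlled directly by $\D'_q(u_{n,i})^*$; and (iii) is exactly the third limit in $U\!LC_q(u_{n,i})$ (with the second limit handling the $J_{k_n+1}$ tail). A symmetric estimate, using the first limit in $U\!LC_q(u_{n,i})$, then reverses the working-block reduction inside each individual factor to restore $\prod_i\E(\e^{-y_\ell a_n\aa_n(J_i)})$.

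The step I expect to require the most care is the uniform control of $\iota(n,t_n^*)$ defined by \eqref{eq:def-iota}, because the supremum there ranges over the position indices and over all downstream Laplace data $(x_2,\ldots,x_\varsigma,a_2,\ldots,b_\varsigma)$, which change at each step of the induction. One has to verify that condition $\D_q(u_{n,i})^*$, stated for events $A_{n,i}(x_1)$ against arbitrary disjoint unions $J$ starting at distance $\ge t$, covers exactly the configurations that arise when the remaining variable is the Laplace transform over $\bigcup_{j>i}J_j^*$, so that $\iota(n,t_n^*)\le\max_i\gamma_i(n,t_n^*)$ uniformly in all parameters. Once this uniformity is established, all the remaining estimates amount to routine bookkeeping that collapses to the three hypotheses.
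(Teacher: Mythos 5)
Your proposal follows essentially the same route as the paper's proof: trim each block $J_i$ to the working block $J_i^*$ so that consecutive working blocks are separated by gaps of size $>t_n^*$, peel off the working blocks one at a time via Corollary~\ref{cor:fgm-main-estimate}, sum the per-step errors (mixing, short returns, and the $\delta$-integrals) and kill them with $\D_q(u_{n,i})^*$, $\D'_q(u_{n,i})^*$ and $U\!LC_q(u_{n,i})$, then restore the full blocks; the delicate point you single out, namely that $\iota(n,t_n^*)\leq\gamma_i(n,t_n^*)$ uniformly in the downstream Laplace data, is exactly the point the paper relies on when invoking $\D_q(u_{n,i})^*$.

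One quantitative slip: for the boundary remnants of $nI_\ell$ you claim an expected exceedance count of $O(L_{H,n}\bar F_{n,\max}(H))$, but this product need not tend to zero (the maximum $\bar F_{n,\max}(H)$ may be attained far from a long block). The bound that actually works, and the one the paper uses, is \eqref{eq:block-estimate}: each remnant is contained in a single block $J_{i_\ell-1}$ or $J_{i_\ell+\mathscr S_\ell}$, whose total exceedance mass is at most $F^*_{H,n}/k_n\to 0$ (and $k_n\bar F_{n,\max}(H)\to 0$ for the last segment, by \eqref{eq:block-estimate-last}). Relatedly, the literal Lipschitz bound $y\,|\aa_n(\cdot)-\aa_n(\cdot')|$ is not the right tool for the AOT mark, whose expected excess is not controlled by $\bar F$ alone; the containment argument you also state, $1-\e^{-t}\leq \1_{\{t>0\}}$ together with the fact that the discrepancy is supported on the event that some excluded index exceeds its threshold, is the correct (and sufficient) one.
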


\begin{proof}
Without loss of generality, we can assume that $y_1,y_2,\ldots,y_\varsigma\in\R^+$, because if we had $y_j=0$ for some $j=1,\ldots,\varsigma$ then we could consider $J=\bigcup_{\ell=1}^{j-1}I_\ell\cup\bigcup_{\ell=j+1}^\varsigma I_\ell$ instead. Also, we can assume that $a_1>0$.
Let $\hat y:=\inf\{y_j:j=1,\ldots,\varsigma\}>0$ and $\hat Y:=\sup\{y_j:j=1,\ldots,\varsigma\}$. We cut each $J_i$ into two blocks:
\[J_i^*:=[\L_{H,n,i-1},\L_{H,n,i}-t_{H,n,i})\; \mbox{ and } J_i':=J_i\setminus J_i^*\]
Note that $|J_i^*|=\ell_{H,n,i}-t_{H,n,i}$ and $|J_i'|=t_{H,n,i}$.

For each $\ell\in \{1,\ldots,\varsigma\}$, we define $i_\ell:=\min\{i\in \{2,\ldots,k_n\}:J_i\subset n I_{\ell}\}$.
Hence, it follows that $J_{i_\ell},J_{i_\ell+1},\ldots,J_{i_\ell+\mathscr S_\ell-1}\subset n I_\ell$ and
\begin{equation}
\label{eq:blocks}
\L_{H,n,i_\ell+\mathscr S_\ell-1}-\L_{H,n,i_\ell-1}=\sum_{j=i_\ell}^{i_\ell+\mathscr S_\ell-1}\ell_{H,n,j}\sim n|I_\ell|
\end{equation}

First of all, recall that for every $0\leq x_i, z_i\leq 1$, we have
\begin{equation}
\label{eq:inequality}
\left|\prod x_i-\prod z_i\right|\leq \sum |x_i-z_i|.
\end{equation}

We start by making the following approximation, in which we use \eqref{eq:inequality},
{\fontsize{10}{10}\selectfont
\begin{align*}
\Bigg|\E\left(\e^{-\sum_{\ell=1}^\varsigma y_\ell a_n\aa_n(nI_\ell)}\right)&-\E\left(\e^{-\sum_{\ell=1}^\varsigma y_\ell\sum_{j=i_\ell}^{i_\ell+\mathscr S_\ell-1}a_n\aa_n(J_j)}\right)\Bigg|\leq\E\left(1-\e^{-\sum_{\ell=1}^\varsigma y_\ell a_n\aa_n\left(nI_\ell\setminus\cup_{j=i_\ell}^{i_\ell+\mathscr S_\ell-1}J_j\right)}\right)\\
&\leq \E\left(1-\e^{-\sum_{\ell=1}^\varsigma y_\ell a_n\aa_n(J_{i_\ell-1}\cup J_{i_\ell+\mathscr S_\ell})}\right)\\
&\leq \varsigma K\E\left(1-\e^{-a_n\aa_n(J_{i_\ell-1})}\right)+\varsigma K\E\left(1-\e^{-a_n\aa_n(J_{i_\ell+\mathscr S_\ell})}\right),
\end{align*}
}

where $\max\{y_1,\ldots,y_\varsigma\}\leq K\in\N$.
In order to show that we are allowed to use the above approximation we just need to check that $\E\left(1-\e^{-a_n\aa_n(J_i)}\right)\to0$ as $n\to\infty$ for every $i=1,\ldots,k_n+1$.
By Corollary~\ref{cor:exponential} we have for $i=1,\ldots,k_n$
\begin{equation}
\label{eq:error1}
\E\left(\e^{-a_n\aa_n(J_i)}\right)=\E\left(\e^{-a_n\aa_{n,\L_{H,n,i-1}}^{\L_{H,n,i}}}\right)=1-\sum_{j=\L_{H,n,i-1}}^{\L_{H,n,i}-1}\int_0^{\infty}\e^{-x}\p(A_{n,j}(x/a_{n,j}))dx+Err,
\end{equation}
where
\[\left|Err\right|\leq 2\sum_{j=\L_{H,n,i-1}}^{\L_{H,n,i}-1}\sum_{r=j+1}^{\L_{H,n,i}-1}\p\left(Q^{(0)}_{q,n,j}\cap \{X_r>u_{n,r}\}\right)+\int_0^{\infty}\e^{-x}\delta_{n,\L_{H,n,i-1},\ell_{H,n,i}}(x/a_n)dx \to 0\]
as $n\to\infty$ by $\D'_q(u_{n,i})^*$ and $U\!LC_q(u_{n,i})$.
Since
\[\sum_{j=\L_{H,n,i-1}}^{\L_{H,n,i}-1}\int_0^{\infty}\e^{-x}\p(A_{n,j}(x/a_{n,j}))dx \leq \sum_{j=\L_{H,n,i-1}}^{\L_{H,n,i}-1}\int_0^{\infty}\e^{-x}\p(X_j>u_{n,j})dx=\sum_{j=\L_{H,n,i-1}}^{\L_{H,n,i}-1}\bar F(u_{n,j})\leq\frac{F_{H,n}^*}{k_n}\]
we get $\E\left(\e^{-a_n\aa_n(J_i)}\right)\xrightarrow[n\to\infty]{}1$ by \eqref{F_hn}.

If $i=k_n+1$ then
\begin{equation}
\E\left(\e^{-a_n\aa_n(J_i)}\right)=\E\left(\e^{-a_n\aa_{n,\L_{H,n,k_n}}^{Hn}}\right)=1-\sum_{j=\L_{H,n,k_n}}^{Hn-1}\int_0^{\infty}\e^{-x}\p(A_{n,j}(x/a_n))dx+Err,
\end{equation}
where
\[\left|Err\right|\leq 2\sum_{j=\L_{H,n,k_n}}^{Hn-1}\sum_{r=j+1}^{Hn-1}\p\left(Q^{(0)}_{q,n,j}\cap \{X_r>u_{n,r}\}\right)+\int_0^{\infty}\e^{-x}\delta_{n,\L_{H,n,k_n},Hn-\L_{H,n,k_n}}(x/a_n)dx \to 0\]
as $n\to\infty$ by $\D'_q(u_{n,i})^*$ and $U\!LC_q(u_{n,i})$.
Since, by \eqref{eq:block-estimate-last},
\[\sum_{j=\L_{H,n,k_n}}^{Hn-1}\int_0^{\infty}\e^{-x}\p(A_{n,j}(x/a_{n,j}))dx \leq \sum_{j=\L_{H,n,k_n}}^{Hn-1}\int_0^{\infty}\e^{-x}\p(X_j>u_{n,j})dx=\sum_{j=\L_{H,n,k_n}}^{Hn-1}\bar F(u_{n,j})\leq k_n\bar F_{n,\max}(H)\]
we get $\E\left(\e^{-a_n\aa_n(J_{k_n+1})}\right)\xrightarrow[n\to\infty]{}1$ by \eqref{eq:kn-sequence}.

Now, we proceed with another approximation which consists of replacing $J_i$ by $J_i^*$. Using \eqref{eq:inequality} we have
\begin{align*}
\Bigg|\E\left(\e^{-\sum_{\ell=1}^\varsigma y_\ell\sum_{i=i_\ell}^{i_\ell+\mathscr S_\ell-1}a_n\aa_n(J_i)}\right)&-\E\left(\e^{-\sum_{\ell=1}^\varsigma y_\ell\sum_{i=i_\ell}^{i_\ell+\mathscr S_\ell-1}a_n\aa_n(J_i^*)}\right)\Bigg|\leq \E\left(1-\e^{-\sum_{\ell=1}^\varsigma y_\ell\sum_{i=i_\ell}^{i_\ell+\mathscr S_\ell-1}a_n\aa_n(J'_i)}\right)\\
&\leq K\sum_{\ell=1}^\varsigma \E\left(1-\e^{-\sum_{i=i_\ell}^{i_\ell+\mathscr S_\ell-1}a_n\aa_n(J'_i)}\right)\leq K\sum_{\ell=1}^\varsigma \sum_{i=i_\ell}^{i_\ell+\mathscr S_\ell-1}\E\left(1-\e^{-a_n\aa_n(J'_i)}\right)\\
&\leq K\sum_{i=1}^{k_n}\E\left(1-\e^{-a_n\aa_n(J'_i)}\right)
\end{align*}

Now, we must show that $\sum_{i=1}^{k_n}\E\left(1-\e^{-a_n\aa_n(J'_i)}\right)\to 0,$ as $n\to\infty$, in order for the approximation to make sense. By Corollary~\ref{cor:exponential} we have

\begin{equation}
\label{eq:error2}
\E\left(\e^{-a_n\aa_n(J'_i)}\right)=\E\left(\e^{-a_n\aa_{n,\L_{H,n,i}-t_{H,n,i}}^{\L_{H,n,i}}}\right)=1-\sum_{j=\L_{H,n,i}- t_{H,n,i}}^{\L_{H,n,i}-1}\int_0^{\infty}\e^{-x}\p(A_{n,j}(x/a_{n,j}))dx+Err,
\end{equation}
where
\begin{align*}
\sum_{i=1}^{k_n}\left|Err\right|\leq & 2\sum_{i=1}^{k_n}\sum_{j=\L_{H,n,i}-t_{H,n,i}}^{\L_{H,n,i}-1}\sum_{r=j+1}^{\L_{H,n,i}-1}\p\left(Q^{(0)}_{q,n,j}\cap \{X_r>u_{n,r}\}\right)\\
&+\sum_{i=1}^{k_n}\int_0^{\infty}\e^{-x}\delta_{\L_{H,n,i}- t_{H,n,i},t_{H,n,i},n}(x/a_n)dx \to 0
\end{align*}

as $n\to\infty$ by $\D'_q(u_{n,i})^*$ and $U\!LC_q(u_{n,i})$. We get, by \eqref{eq:kn-sequence} as well,
\begin{align*}
\sum_{i=1}^{k_n}&\E\left(1-\e^{-a_n\aa_n(J'_i)}\right)\sim \sum_{i=1}^{k_n}\sum_{j=\L_{H,n,i}-t_{H,n,i}}^{\L_{H,n,i}-1}\int_0^{\infty}\e^{-x}\p(A_{n,j}(x/a_{n,j}))dx\\
&\leq\sum_{i=1}^{k_n}\sum_{j=\L_{H,n,i}-t_{H,n,i}}^{\L_{H,n,i}-1}\bar F(u_{n,j})\leq\sum_{i=1}^{k_n}\varepsilon(H,n)\frac{F_{H,n}^*}{k_n}=k_n(t_n^*+1)\bar F_{n,\max}(H)\xrightarrow[n\to\infty]{}0
\end{align*}

Let us fix now some $\hat\ell\in\{1,\ldots,\varsigma\}$ and $i\in\{i_{\hat\ell},\ldots,i_{\hat\ell}+\mathscr S_{\hat\ell}-1\}$. Let $M_i=y_{\hat\ell}\sum_{j=i}^{i_{\hat\ell}+\mathscr S_{\hat\ell}-1}a_n\aa_n(J_j^*)$ and $L_{\hat\ell}=\sum_{\ell=\hat\ell+1}^\varsigma y_\ell\sum_{j=i_{\ell}}^{i_\ell+\mathscr S_\ell-1}a_n\aa_n(J_j^*).$
Using Corollary~\ref{cor:fgm-main-estimate} along with the facts that $\iota(n,t)\le\gamma_i(n,t)$ and $\gamma_i(n,t)$ is decreasing in $t$, we obtain
\begin{equation*}
\left|\E\left(\e^{-y_{\hat\ell}a_n\aa_n(J_{i_{\hat\ell}}^*)-M_{i_{\hat\ell}+1}-L_{\hat\ell}}\right)-\E\left(\e^{-y_{\hat\ell}a_n\aa_n(J_{i_{\hat\ell}}^*)}\right)\E\left(\e^{-M_{i_{\hat\ell}+1}-L_{\hat\ell}}\right) \right|\leq \Upsilon_{n,i_{\hat\ell}},
\end{equation*}
where
\begin{align*}
\Upsilon_{n,i}&=t_{H,n,i}\gamma_i(n,t_{H,n,i})+4\sum_{j=\L_{H,n,i-1}}^{\L_{H,n,i-1}+t_{H,n,i}-1}\sum_{r=j+1}^{\L_{H,n,i-1}+t_{H,n,i}-1}\p\left(Q^{(0)}_{q,n,j}\cap\{X_r>u_{n,r}\}\right)\\
&\qquad+2\int_0^{\infty}y_{\hat\ell}\e^{-y_{\hat\ell}x}\delta_{n,\L_{H,n,i-1},\ell_{H,n,i}-t_{H,n,i}}(x/a_n)dx\\
&\leq \ell_{H,n,i}\gamma_i(n,t_n^*)+4\sum_{j=\L_{H,n,i-1}}^{\L_{H,n,i}-1}\sum_{r=j+1}^{\L_{H,n,i}-1}\p\left(Q^{(0)}_{q,n,j}\cap\{X_r>u_{n,r}\}\right)\\
&\qquad+2\hat Y\int_0^{\infty}\e^{-\hat y x}\delta_{n,\L_{H,n,i-1},\ell_{H,n,i}-t_{H,n,i}}(x/a_n)dx
\end{align*}

Since $\E\left(\e^{-y_{\hat\ell}a_n\aa_n(J_i^*)}\right)\leq 1$ for any $i\in\{1,\ldots,k_n\}$, it follows by the same argument that
\begin{align*}
\Big|\E\Big(\e^{-M_{i_{\hat\ell}}-L_{\hat\ell}}\Big)-&\E\Big(\e^{-y_{\hat\ell}a_n\aa_n(J_{i_{\hat\ell}}^*)}\Big)\E\Big(\e^{-y_{\hat\ell}a_n\aa_n(J_{i_{\hat\ell}+1}^*)}\Big)\E\Big(\e^{-M_{i_{\hat\ell}+2}-L_{\hat\ell}}\Big)\Big|\\
&\leq\Big|\E\Big(\e^{-M_{i_{\hat\ell}}-L_{\hat\ell}}\Big)-\E\Big(\e^{-y_{\hat\ell}a_n\aa_n(J_{i_{\hat\ell}}^*)}\Big)\E\Big(\e^{-M_{i_{\hat\ell}+1}-L_{\hat\ell}}\Big) \Big|\\
&+\E\Big(\e^{-y_{\hat\ell}a_n\aa_n(J_{i_{\hat\ell}}^*)}\Big)\Big|\E\Big(\e^{-M_{i_{\hat\ell}+1}-L_{\hat\ell}}\Big)-\E\Big(\e^{-y_{\hat\ell}a_n\aa_n(J_{i_{\hat\ell}+1}^*)}\Big)\E\Big(\e^{-M_{i_{\hat\ell}+2}-L_{\hat\ell}}\Big)\Big|\\
&\leq \Upsilon_{n,i_{\hat\ell}}+\Upsilon_{n,i_{\hat\ell}+1}
\end{align*}

Hence, proceeding inductively with respect to $i\in\{i_{\hat\ell},\ldots,i_{\hat\ell}+\mathscr S_{\hat\ell}-1\}$, we obtain
\[\Big|\E\Big(\e^{-M_{i_{\hat\ell}}-L_{\hat\ell}}\Big)-\prod_{j=i_{\hat\ell}}^{i_{\hat\ell}+\mathscr S_{\hat\ell}-1}\E\Big(\e^{-y_{\hat\ell}a_n\aa_n(J_j^*)}\Big)\E\Big(\e^{-L_{\hat\ell}}\Big)\Big|\leq \sum_{i=i_{\hat\ell}}^{i_{\hat\ell}+\mathscr S_{\hat\ell}-1}\Upsilon_{n,i}\]

In the same way, if we proceed inductively with respect to $\hat\ell\in\{1,\ldots,\varsigma\}$, we get
\begin{align*}
&\left|\E\left(\e^{-\sum_{\ell=1}^\varsigma y_\ell\sum_{j=i_\ell}^{i_\ell+\mathscr S_\ell}a_n\aa_n(J_j^*)}\right)-\prod_{\ell=1}^\varsigma\prod_{i=i_{\ell}}^{i_\ell+\mathscr S_\ell-1}\E\Big(\e^{-y_{\ell}a_n\aa_n(J_i^*)}\Big)\right|\leq \sum_{\ell=1}^\varsigma\sum_{i=i_{\ell}}^{i_{\ell}+\mathscr S_{\ell}-1}\Upsilon_{n,i}\\
&\leq\sum_{i=1}^{k_n}\Upsilon_{n,i}\leq Hn\gamma_i(n,t_n^*)+4\sum_{i=1}^{k_n}\sum_{j=\L_{H,n,i-1}}^{\L_{H,n,i}-1}\sum_{r=j+1}^{\L_{H,n,i}-1}\p\left(Q^{(0)}_{q,n,j}\cap\{X_r>u_{n,r}\}\right)\\
&\qquad\qquad\qquad+2\hat Y\sum_{i=1}^{k_n}\int_0^{\infty}\e^{-\hat y x}\delta_{n,\L_{H,n,i-1},\ell_{H,n,i}-t_{H,n,i}}(x/a_n)dx\to 0
\end{align*}
as $n\to\infty$, by $\D_q(u_{n,i})^*$, $\D'_q(u_{n,i})^*$ and $U\!LC_q(u_{n,i})$.

Using \eqref{eq:inequality} again, we have the final approximation
\begin{align*}
\left|\prod_{\ell=1}^\varsigma\prod_{i=i_{\ell}}^{i_\ell+\mathscr S_\ell-1} \E\Big(\e^{-y_{\ell}a_n\aa_n(J_i)}\Big)-\prod_{\ell=1}^\varsigma\prod_{i=i_{\ell}}^{i_\ell+\mathscr S_\ell-1} \E\Big(\e^{-y_{\ell}a_n\aa_n(J_i^*)}\Big)\right| &\leq K\sum_{\ell=1}^\varsigma\sum_{i=i_{\ell}}^{i_\ell+\mathscr S_\ell-1} \E\left(1-\e^{-a_n\aa_n(J'_i)}\right)\\
&\leq K \sum_{i=1}^{k_n}\E\left(1-\e^{-a_n\aa_n(J'_i)}\right).
\end{align*}
We have already proved that $\sum_{i=1}^{k_n}\E\left(1-\e^{-a_n\aa_n(J'_i)}\right)\to 0$ as $n\to\infty$, so we only need to gather all the approximations to finally obtain the stated result.

\end{proof}

\begin{proof}[Proof of Theorem~\ref{thm:convergence}]
In order to prove convergence of $a_n A_n$ to a process $A$, it is sufficient to show that for any $\varsigma$ disjoint intervals $I_1, I_2,\ldots, I_\varsigma\in\S$, the joint distribution of $a_n A_n$ over these intervals converges to the joint distribution of $A$ over the same intervals, \ie
\[
(a_n A_n(I_1),a_n A_n(I_2),\ldots,a_n A_n(I_\varsigma))\xrightarrow[n\to\infty]{}(A(I_1), A(I_2), \ldots, A(I_\varsigma)),
\]
which will be the case if the corresponding joint Laplace transforms converge. Hence, we only need to show that
\[
\psi_{a_n A_n}(y_1, y_2,\ldots,y_\varsigma)\to \psi_A(y_1, y_2,\ldots,y_\varsigma)=\E\left(\e^{-\sum_{\ell=1}^\varsigma y_\ell A(I_\ell)}\right), \quad \text{as $n\to\infty$,}
\]
for every $\varsigma$ non-negative values $y_1, y_2,\ldots,y_\varsigma$, each choice of $\varsigma$ disjoint intervals $I_1, I_2,\ldots, I_\varsigma\in\S$ and each $\varsigma\in\N$. Note that $\psi_{a_n A_n}(y_1, y_2,\ldots,y_\varsigma)=\E\left(\e^{-\sum_{\ell=1}^\varsigma y_\ell a_n A_n(I_\ell)}\right)=\E\left(\e^{-\sum_{\ell=1}^\varsigma y_\ell a_n\aa_n(v_nI_\ell)}\right)$ and
{
\fontsize{10}{10}\selectfont
\begin{align*}
\left|\E\left(\e^{-\sum_{\ell=1}^\varsigma y_\ell a_n \aa_n(v_n I_\ell)}\right)-\E\left(\e^{-\sum_{\ell=1}^\varsigma y_\ell A(I_\ell)}\right)\right|
&\leq\left|\E\left(\e^{-\sum_{\ell=1}^\varsigma y_\ell a_n\aa_n(v_nI_\ell)}\right)-\E\left(\e^{-\sum_{\ell=1}^\varsigma y_\ell a_n\aa_n\left(\frac{n}{\tau}I_\ell\right)}\right)\right|\\
&+\left|\E\left(\e^{-\sum_{\ell=1}^\varsigma y_\ell a_n\aa_n\left(\frac{n}{\tau}I_\ell\right)}\right)-\prod_{\ell=1}^\varsigma\prod_{i=i_{\ell}}^{i_\ell+\mathscr S_\ell-1} \E\Big(\e^{-y_{\ell}a_n\aa_n(J_i)}\Big)\right|\\
&+\left|\prod_{\ell=1}^\varsigma\prod_{i=i_{\ell}}^{i_\ell+\mathscr S_\ell-1} \E\Big(\e^{-y_{\ell}a_n\aa_n(J_i)}\Big)-\E\left(\e^{-\sum_{\ell=1}^\varsigma y_\ell A(I_\ell)}\right)\right|
\end{align*}
}

where $J_1,J_2,\ldots,J_{k_n+1}$ are the elements of the partition of $[0,Hn)$ given by Proposition~\ref{prop:Laplace-estimates}, with $J=\bigcup_{\ell=1}^\varsigma \frac{1}{\tau}I_\ell$. Since $v_n\sim\frac{n}{\tau}$, the first term on the right goes to $0$ as $n\to\infty$. By Proposition~\ref{prop:Laplace-estimates}, the second term on the right also goes to $0$ as $n\to\infty$. Finally, by Corollary~\ref{cor:exponential}, we have
\begin{align*}
\E\left(\e^{-y_\ell a_n\aa_n(J_i)}\right)=1-\sum_{j=\L_{H,n,i-1}}^{\L_{H,n,i}-1}\int_0^{\infty}y_\ell\e^{-y_\ell x}\p(A_{n,j}(x/a_{n,j}))dx+Err
\end{align*}
where
\[\left|Err\right|\leq 2\sum_{j=\L_{H,n,i-1}}^{\L_{H,n,i}-1}\sum_{r=j+1}^{\L_{H,n,i}-1}\p\left(Q^{(0)}_{q,n,j}\cap \{X_r>u_{n,r}\}\right)+\int_0^{\infty}y_\ell\e^{-y_\ell x}\delta_{n,\L_{H,n,i-1},\ell_{H,n,i}}(x/a_n)dx\]

Using \eqref{eq:inequality}, we have that
\begin{align*}
&\left|\prod_{\ell=1}^\varsigma\prod_{i=i_\ell}^{i_\ell+\mathscr S_\ell-1}\E\Big(\e^{-y_\ell a_n\aa_n(J_i)}\Big)-\prod_{\ell=1}^\varsigma\prod_{i=i_{\ell}}^{i_\ell+\mathscr S_\ell-1}\left(1-\sum_{j=\L_{H,n,i-1}}^{\L_{H,n,i}-1}\int_0^{\infty}y_\ell\e^{-y_\ell x}\p(A_{n,j}(x/a_{n,j}))dx\right)\right|\\
&\leq\sum_{\ell=1}^\varsigma\sum_{i=i_\ell}^{i_\ell+\mathscr S_\ell-1}\left|\E\Big(\e^{-y_\ell a_n\aa_n(J_i)}\Big)-\left(1-\sum_{j=\L_{H,n,i-1}}^{\L_{H,n,i}-1}\int_0^{\infty}y_\ell\e^{-y_\ell x}\p(A_{n,j}(x/a_{n,j}))dx\right)\right|\\
&\leq\sum_{\ell=1}^\varsigma\sum_{i=i_\ell}^{i_\ell+\mathscr S_\ell-1}|Err|\leq\sum_{i=1}^{k_n}|Err|\\
&\leq 2\sum_{i=1}^{k_n}\sum_{j=\L_{H,n,i-1}}^{\L_{H,n,i}-1}\sum_{r=j+1}^{\L_{H,n,i}-1}\p\left(Q^{(0)}_{q,n,j}\cap \{X_r>u_{n,r}\}\right)+\sum_{i=1}^{k_n}\int_0^{\infty}y_\ell\e^{-y_\ell x}\delta_{n,\L_{H,n,i-1},\ell_{H,n,i}}(x/a_n)dx\\
&\to 0
\end{align*}

as $n\to\infty$ by $\D'_q(u_{n,i})^*$ and $U\!LC_q(u_{n,i})$, so it follows that
\begin{align*}
\prod_{\ell=1}^\varsigma\prod_{i=i_\ell}^{i_\ell+\mathscr S_\ell-1}&\E\Big(\e^{-y_\ell a_n\aa_n(J_i)}\Big)\sim\prod_{\ell=1}^\varsigma\prod_{i=i_{\ell}}^{i_\ell+\mathscr S_\ell-1}\left(1-\sum_{j=\L_{H,n,i-1}}^{\L_{H,n,i}-1}\int_0^{\infty}y_\ell\e^{-y_\ell x}\p(A_{n,j}(x/a_{n,j}))dx\right)\\
&\sim\prod_{\ell=1}^\varsigma\prod_{i=i_{\ell}}^{i_\ell+\mathscr S_\ell-1}\left(1-\sum_{j=\L_{H,n,i-1}}^{\L_{H,n,i}-1}\p\left(Q^{(0)}_{q,n,j}\right)\int_0^{\infty}y_\ell\e^{-y_\ell x}(1-\pi(x))dx\right)\\
&\sim\prod_{\ell=1}^\varsigma\prod_{i=i_\ell}^{i_\ell+\mathscr S_\ell-1}\left(1-\theta\sum_{j=\L_{H,n,i-1}}^{\L_{H,n,i}-1}\bar F(u_{n,j})\left(1-\pi(0)-\int_0^{\infty}\e^{-y_\ell x}d\pi(x)\right)\right)\\
&=\prod_{\ell=1}^\varsigma\prod_{i=i_\ell}^{i_\ell+\mathscr S_\ell-1}\left(1-\theta(1-\phi(y_\ell))\sum_{j=\L_{H,n,i-1}}^{\L_{H,n,i}-1}\bar F(u_{n,j})\right)\\
&\sim\e^{-\sum_{\ell=1}^\varsigma\sum_{i=i_\ell}^{i_\ell+\mathscr S_\ell-1}\theta(1-\phi(y_\ell))\sum_{j=\L_{H,n,i-1}}^{\L_{H,n,i}-1}\bar F(u_{n,j})}
\end{align*}

where $\phi$ is the Laplace transform of $\pi$, and since we have, by \eqref{F_hn},
\[\left|\sum_{j=\L_{H,n,i-1}}^{\L_{H,n,i}-1}\bar F(u_{n,j})-\frac{\ell_{H,n,i}}{n}\tau\right|\leq\left|\sum_{j=0}^{\L_{H,n,i}-1}\bar F(u_{n,j})-\frac{\L_{H,n,i}}{n}\tau\right|+\left|\sum_{j=0}^{\L_{H,n,i-1}-1}\bar F(u_{n,j})-\frac{\L_{H,n,i-1}}{n}\tau\right|\to 0\]
then, by \eqref{eq:blocks},
\[\frac{\tau}{n}\sum_{i=i_\ell}^{i_\ell+\mathscr S_\ell-1}\ell_{H,n,i}\sim\frac{\tau}{n}.n\left|\frac{1}{\tau}I_\ell\right|=|I_\ell|\]
and
\[\left|\sum_{i=i_\ell}^{i_\ell+\mathscr S_\ell-1}\sum_{j=\L_{H,n,i-1}}^{\L_{H,n,i}-1}\bar F(u_{n,j})-|I_\ell|\right|\leq\sum_{i=i_\ell}^{i_\ell+\mathscr S_\ell-1}\left|\sum_{j=\L_{H,n,i-1}}^{\L_{H,n,i}-1}\bar F(u_{n,j})-\frac{\tau}{n}\ell_{H,n,i}\right|\to 0.\]
We conclude that
\[\E\left(\e^{-\sum_{\ell=1}^\varsigma y_\ell a_n \aa_n(v_n I_\ell)}\right)\sim\prod_{\ell=1}^\varsigma\prod_{i=i_\ell}^{i_\ell+\mathscr S_\ell-1}\E\Big(\e^{-y_\ell a_n\aa_n(J_i)}\Big)\sim\e^{-\theta\sum_{\ell=1}^\varsigma(1-\phi(y_\ell))|I_\ell|}=\E\left(\e^{-\sum_{\ell=1}^\varsigma y_\ell A(I_\ell)}\right)\]
where $A$ is a compound Poisson process of intensity $\theta$ and multiplicity d.f. $\pi$.

\end{proof}

\section{Application to sequential systems: an example of uniformly expanding map}
\label{sec:sequential}

In this section we will give a detailed analysis of the application of the general result obtained in Section~\ref{sec:setting} to a particular sequential system. It is constructed with $\beta$ transformations, although it can be generalised to other examples of sequential systems presented in \cite[Section~3]{FFV17} after making the necessary  adaptations.

Consider the family of maps on the unit circle $S^1=[0,1]$, with  the identification $0\sim1$, given by $T_{\beta}(x)=\beta x$ mod $1$ for $\beta>1+c$, with $c>0$. Note that for many such $\beta$, we have that $T_\beta(1)\neq 1$ and, by the identification $0\sim 1$, this means that $T_\beta$ as a map on $S^1$ is not continuous at $\zeta=0\sim1$. For simplicity we assume that $T_\beta(0)=0$ but consider that the orbit of $1$ is still defined to be $T_\beta(1), T_\beta^2(1),\ldots$ although, strictly speaking, $1\sim0$ should be considered a fixed point. In what follows $m$ denotes Lebesgue measure on $[0,1]$.

\begin{theorem}\label{T3}
Consider an unperturbed map $T_\beta$ corresponding to some $\beta=\beta_0>1+c$, with invariant absolutely continuous probability $\mu=\mu_\beta$. Consider a sequential system acting on the unit circle and given by $\TF_n=T_{n}\circ\cdots\circ T_1$, where $T_i=T_{\beta_i}$, for all $i=1,\ldots,n$ and $|\beta_n-\beta|\leq n^{-\xi}$ holds for some $\xi>1$. Let $X_1, X_2,\ldots$ be defined by \eqref{eq:def-stat-stoch-proc-DS}, where the observable function $\varphi$ achieves a global maximum at a chosen periodic point $\zeta$ of prime period $p$ \footnote{$T_\beta^p(\zeta)=\zeta$ and $p$ is the minimum integer with such property} (we allow $\varphi(\zeta)=+\infty$), being of following form:
\begin{equation}
\label{eq:observable-form}
\varphi(x)=g\big(\dist(x,\zeta)\big),
\end{equation} where the function $g:[0,+\infty)\rightarrow {\mathbb
R\cup\{+\infty\}}$ is such that $0$ is a global maximum ($g(0)$ may
be $+\infty$); is a strictly decreasing homeomorphism $g:V \to W$
in a neighbourhood $V$ of $0$; and has one of the following three types of behaviour:

\begin{enumerate}
\item [Type 1:] there exists some strictly positive function $h:W\to\R$ such that for all $y\in\R$
\begin{equation}
\label{eq:def-g1}
\lim_{s\to g(0)}\frac{g^{-1}(s+yh(s))}{g^{-1}(s)}=\e^{-y};
\end{equation}
\item [Type 2:] $g(0)=+\infty$ and there exists $\beta>0$ such that for all $y>0$
\begin{equation}
\label{eq:def-g2}
\lim_{s\to+\infty}\frac{g^{-1}(sy)}{g^{-1}(s)}=y^{-\beta};\end{equation}
\item [Type 3:] $g(0)=D<+\infty$ and there exists $\gamma>0$ such that for all $y>0$
\begin{equation}
\label{eq:def-g3}
\lim_{s\to 0}\frac{g^{-1}(D-sy)}{g^{-1}(D-s)}=y^\gamma.
\end{equation}
\end{enumerate}

Let $(u_n)_{n\in\N}$ be such that $n\mu(X_0>u_n)\to\tau$, as $n\to\infty$ for some $\tau\geq 0$.

Then, the POT MREPP $a_nA_n$ converges in distribution to a compound Poisson process with intensity $\theta$ given by
\begin{equation}
\theta=\begin{cases}
1-\beta^{-p}, \text{when the orbit of $\zeta$ by $T_\beta$ never hits $0\sim1$}\\
\frac{d\mu}{dm}(0)(1-\beta^{-1})+\frac{d\mu}{dm}(1)(1-\beta^{-p}), \text{when $\zeta=0\sim1$}
\end{cases}
\end{equation}
and multiplicity distribution
\begin{equation}
\label{eq:POT-multiplicity}
\pi(x)=\begin{cases}
1-\e^{-x}, \text{when $g$ is of type 1 and $a_n=h(u_n)^{-1}$}\\
1-(1+x)^{-\beta}, \text{when $g$ is of type 2 and $a_n=u_n^{-1}$}\\
1-(1-x)^{\gamma}, \text{when $g$ is of type 3 and $a_n=(D-u_n)^{-1}$}
\end{cases}
\end{equation}

and, for $a_n=h(u_n)^{-1}$ the AOT MREPP $a_nA_n$ converges in distribution to a compound Poisson process with the same intensity $\theta$ as above and multiplicity d.f. $\pi$ given by
\begin{equation}
\label{eq:AOT-multiplicity}
\pi(x)=1-\lim_{n\to\infty}h_{\kappa(u_n,q(u_n)x)}(x)
\end{equation}
where $g_{\kappa,u}(x)=\sum_{i=0}^\kappa(g(M^ix)-u)$ with $M=\beta^p$; $\kappa=\kappa(u,x)$ is the only integer such that $x\in\left[g_{\kappa,u}\left(\frac{g^{-1}(u)}{M^\kappa}\right),g_{\kappa,u}\left(\frac{g^{-1}(u)}{M^{\kappa+1}}\right)\right)$; and $h_k$ is a strictly monotone homeomorphism $h_k$ such that
\begin{equation}
\label{eq:def-h}
\lim_{u\to g(0)}\frac{g_{\kappa,u}(g^{-1}(u)h_\kappa(x))}{h(u)}=x.
\end{equation}

\end{theorem}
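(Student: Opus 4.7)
The plan is to apply Theorem~\ref{thm:convergence} with $q=p$, the prime period of $\zeta$. Near $\zeta$ the derivative $(T_\beta^p)'(\zeta)=\beta^p>1$ forces any cluster to consist of a chain of $p$--periodic exceedances: a first exceedance $\TF_i(x)$ at distance $r$ from $\zeta$ is followed by exceedances $\TF_{i+p}(x),\ldots,\TF_{i+p\kappa}(x)$ at approximate distances $r\beta^p,\ldots,r\beta^{p\kappa}$, and the cluster ends (an escape, \ie $Q^{(0)}_{p,n,\cdot}$) precisely when $r\beta^{p(\kappa+1)}>g^{-1}(u_n)$. For $j<p$ the orbit of $\zeta$ does not come close to itself under $T_\beta^j$ (since $p$ is the prime period), so $\min_i R_{j,n,i}$ stays bounded only up to $j=p-1$ while $\min_i R_{p,n,i}\to\infty$, confirming that $p$ is the correct choice in \eqref{def:q}.

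For the mixing condition $\D_p(u_{n,i})^*$, I would use the transfer-operator machinery for sequential $\beta$--transformations already developed in \cite{FFV17}: the summability $\sum_n n^{-\xi}<\infty$ guaranteed by $\xi>1$, together with a uniform Lasota--Yorke inequality, gives BV control of the densities $\frac{d\TF_{j*}m}{dm}$ and reduces the factorisation of the joint probabilities to the decay of correlations of the unperturbed map $T_\beta$. Condition $\D'_p(u_{n,i})^*$ holds because the pullback under $\TF_j$ of the annulus $\{g^{-1}(u_n)/\beta^p\le\dist(\cdot,\zeta)<g^{-1}(u_n)\}$ has measure $\asymp(1-\beta^{-p})\bar F_j(u_{n,j})$, and a short return within the next $p-1$ non--periodic time steps is excluded by the bounded geometry of the orbit of $\zeta$. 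Finally, $U\!LC_p(u_{n,i})$ follows from the geometric bound $\p(R^{(\kappa)}_{n,j}(x))\leq C\beta^{-p\kappa}\bar F_j(u_{n,j})$, which comes from the contraction factor $\beta^{-p\kappa}$ of the inverse branch of $T_\beta^{p\kappa}$ at $\zeta$.

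For the extremal index, \eqref{eq:EIH} forces $\theta=\lim_n\p(Q^{(0)}_{p,n,j})/\bar F_j(u_{n,j})$; the escape set is the annulus of relative width $1-\beta^{-p}$ around $\zeta$, giving $\theta=1-\beta^{-p}$ in the generic case, and for $\zeta=0\sim 1$ the two identified preimages must be weighted by the invariant densities, contributing $\frac{d\mu}{dm}(0)(1-\beta^{-1})$ and $\frac{d\mu}{dm}(1)(1-\beta^{-p})$ as stated. For the POT multiplicity, the maximum excess in a cluster of size $\kappa+1$ is attained at the \emph{first} exceedance, at distance $r\in[g^{-1}(u_n)\beta^{-p(\kappa+1)},g^{-1}(u_n)\beta^{-p\kappa})$ from $\zeta$, so the ratio in \eqref{eq:multiplicity} reduces to the regular variation of $g$; the three types \eqref{eq:def-g1}--\eqref{eq:def-g3} with $a_n=h(u_n)^{-1},\,u_n^{-1},\,(D-u_n)^{-1}$ respectively deliver the Gumbel, Fr\'echet and Weibull forms of \eqref{eq:POT-multiplicity}.

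For AOT, the cluster mass equals $\sum_{k=0}^\kappa(g(r\beta^{pk})-u_n)=g_{\kappa,u_n}(r)$; conditioning on the cluster size $\kappa+1$ and changing variables through the homeomorphism $h_\kappa$ defined in \eqref{eq:def-h} turns the ratio in \eqref{eq:multiplicity} into the implicit expression \eqref{eq:AOT-multiplicity}. The main obstacle will be establishing \emph{uniform} convergence in $j\in\{0,\ldots,Hn-1\}$ for both \eqref{eq:EIH} and \eqref{eq:multiplicity} despite the time--varying maps $T_1,\ldots,T_n$: the stationary computation at $\zeta$ must be propagated simultaneously over the full window $[0,Hn)$, and the summability of $n^{-\xi}$ with $\xi>1$ is precisely what compensates for the drift of the densities $\frac{d\TF_{j*}m}{dm}$ away from $\frac{d\mu}{dm}$. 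A small additional bookkeeping argument is needed to separate the contributions of $0$ and $1$ in the case $\zeta=0\sim1$ before the same scheme is applied on each side of the discontinuity.
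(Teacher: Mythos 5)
Your proposal follows essentially the same route as the paper: take $q=p$, verify $\D_p(u_{n,i})^*$ via the sequential transfer-operator and decay-of-correlations estimates of \cite{FFV17}, obtain $\theta$ from the Lebesgue measure of the escape annulus of relative width $1-\beta^{-p}$ (using the regularity of Parry's density, and splitting the contributions of the two sides when $\zeta=0\sim1$), obtain the multiplicity law by reducing \eqref{eq:multiplicity} to the stationary computation of \cite{FFM18} driven by the three types of $g$, and control $U\!LC_p(u_{n,i})$ through the geometric decay $\p(R^{(\kappa)}_{n,j}(x))\lesssim \beta^{-p\kappa}\bar F(u_n)$ of cluster sizes. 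Two points are under-specified relative to what the condition actually demands. First, your paragraph on $\D'_p(u_{n,i})^*$ does not address its content: one must show that the double sum $\sum_{j}\sum_{r>j}\p\big(Q^{(0)}_{p,n,j}\cap\{X_r>u_n\}\big)$ over each block of length $\sim Hn/k_n$ vanishes, which requires both the expansion estimate showing that escaped points do not return before a time $\tilde R_{p,n}\to\infty$ (this is implicit in your first paragraph, but note it must cover all multiples of $p$ up to an arbitrary $J$, not just the first $p-1$ steps, which are already excluded by the definition of $Q^{(0)}_{p,n,j}$) and a second application of decay of correlations to bound the remaining terms by $O\big(n\tilde L_n\, m(U_n)^2+n\lambda^{\tilde R_{p,n}}m(U_n)\big)$; the annulus-measure estimate you give there belongs to the extremal-index computation, not to $\D'_p$. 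Second, the calibration hypothesis \eqref{F_hn} for the non-stationary marginals is never verified, although the ingredient you invoke at the end (summability of $\log i/i^{\xi}$ controlling $\|\Pi_i(1)-h\|_1$, after discarding the first $\lfloor n^{\gamma}\rfloor$ observations) is exactly what the paper uses for it. With these two verifications filled in, your argument coincides with the paper's.
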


\begin{remark}
Examples of each one of the three types are as follows:
$g(x)=-\log x$ (in this case \eqref{eq:def-g1} is easily verified
with $h\equiv 1$), $g(x)=x^{-1/\alpha}$ for some $\alpha>0$ (condition \eqref{eq:def-g2} is verified with $\beta=\alpha$) and
$g(x)=D-x^{1/\alpha}$ for some $D\in\R$ and $\alpha>0$ (condition
\eqref{eq:def-g3} is verified with $\gamma=\alpha$). For these examples, the multiplicity d.f. of the compound Poisson process associated to the AOT MREPP $a_nA_n$ can be computed as shown in the following table:

\begin{tabular}{c|p{11cm}}
 Examples of $g(x)$ & Respective distribution $\pi(x)$ \\
\hline
\rule{0pt}{5ex}
 $-\log(x)$ & $1-(\sqrt{M})^{-\lfloor\frac{\sqrt{1+8x/\log M}-1}{2}\rfloor}\e^{-\frac{x}{\lfloor\frac{\sqrt{1+8x/\log M}-1}{2}\rfloor+1}}$ \\
\rule{0pt}{6ex}
 $x^{-1/\alpha}$ & $1-\left(\frac{1-M^{-1/\alpha}}{1-M^{-(\kappa(x)+1)/\alpha}}\right)^{-\alpha}(\kappa(x)+1+x)^{-\alpha}$
where $\kappa=\kappa(x)$ is the only integer such that $\frac{M^{\kappa/\alpha}-M^{-1/\alpha}}{1-M^{-1/\alpha}}\leq \kappa+1+x<\frac{M^{(\kappa+1)/\alpha}-1}{1-M^{-1/\alpha}}$ \\
\rule{0pt}{6ex}
 $D-x^{1/\alpha}$ & $1-\left(\frac{1-M^{1/\alpha}}{1-M^{(\kappa(x)+1)/\alpha}}\right)^\alpha(\kappa(x)+1-x)^\alpha$ where $\kappa=\kappa(x)$ is the only integer such that $\frac{1-M^{-(\kappa+1)/\alpha}}{M^{1/\alpha}-1}<\kappa+1-x\leq\frac{M^{1/\alpha}-M^{-\kappa/\alpha}}{M^{1/\alpha}-1}$\\
\end{tabular}
\end{remark}

\begin{remark}
We point out that in this example we take $u_{n,i}=u_n$, where $(u_n)_{n\in\N}$ satisfies $n\mu(X_0>u_n)\to\tau$, as $n\to\infty$ for some $\tau>0$, where $\mu$ is the invariant measure of the original map $T_\beta$.\\
\end{remark}

\subsection{Preliminaries}
As we said above, we let $\mu$ denote the invariant measure of the original map $T_\beta$ and let $h=\frac{d\mu}{dm}$ be its density. In what follows, let $U_n=\{X_0>u_n\}$.

We will assume throughout this subsection the existance of some $\xi>1$ such that
\begin{equation}
\label{eq:beta-rate}
|\beta_{n}-\beta|\leq \frac1{n^\xi}.
\end{equation}
Also let $0<\gamma<1$ be such that $\gamma\xi>1$. In what follows $P$ denotes the Perron-Fr\"obenius transfer operator associated to the unperturbed map $T_\beta$ with respect to the reference Lebesgue measure $m$. Recall that $\Pi_i=P_i\circ\ldots\circ P_1$, where $P_i$ is the transfer operator associated to $T_i=T_{\beta_i}$, while $P^i$ is the corresponding concatenation for the unperturbed map $T_\beta$. Note that by \cite[Lemma~3.10]{CR07}, we have
\begin{equation}
\label{eq:PF-approximation}
\left\| \Pi_i(g)-\int g dm\; h\right\|_1\leq C_1\frac{\log i}{i^\xi}\|g\|_{BV}.
\end{equation}

For any measurable set $A\subset [0,1]$, we have
\begin{align*}
m(\TF_i^{-1}(A))&=\int \I_A\circ T_i\circ\ldots\circ T_1 dm =\int  \I_A \Pi_i(1) dm\\
&=\int \I_{A} hdm + \int \I_{A}(\Pi_i(1)-h)dm.
\end{align*}
By \eqref{eq:PF-approximation}, if $i\geq\lfloor n^\gamma\rfloor$ (recall that $\gamma\xi>1$) then we have $\int|\Pi_i(1)-h| dm\leq C_1 \frac{\log i}{i^\xi}=o(n^{-1}),$ which allows us to write:
\begin{equation}
\label{eq:iterated-measure}
m(\TF_i^{-1}(A))=\mu(A)+o(n^{-1}).
\end{equation}

\subsubsection{Verification of condition \eqref{F_hn}} We want to show that $\sum_{i=0}^{h_n-1}m(X_i>u_n)=\frac{h_n}{n}\tau+o(1)$ for any unbounded increasing sequence of positive integers $h_n\leq Hn$.

We begin with the following lemma.
\begin{lemma}
\label{lem:m-stationarity}
We have that
$$\sum_{i=0}^{h_n-1}\int_{U_n} P^i(1) \,dm=\frac{h_n}{n}\tau+o(1).$$
\end{lemma}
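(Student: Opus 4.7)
The plan is to use the Perron--Fr\"obenius duality to turn the integrals on the left into measures of preimages under the unperturbed map $T_\beta$, and then exploit the exponential decay of correlations enjoyed by $T_\beta$ (a piecewise expanding $\beta$-transformation has a spectral gap on BV) to compare these measures with the stationary value $\mu(U_n)$.

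First I would apply the duality $\int g\cdot P f\, dm=\int g\circ T_\beta\cdot f\, dm$ iteratively with $g=\I_{U_n}$ and $f=1$ to obtain
\[
\int_{U_n} P^i(1)\,dm = m(T_\beta^{-i}(U_n)),
\]
so that the sum in question becomes $\sum_{i=0}^{h_n-1} m(T_\beta^{-i}(U_n))$. I would then split each term as
\[
m(T_\beta^{-i}(U_n))=\mu(U_n)+\int_{U_n}\bigl(P^i(1)-h\bigr)\,dm.
\]

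For the main term, using that $n\mu(U_n)\to\tau$ and $h_n/n\le H$, one has
\[
h_n\mu(U_n)=\frac{h_n}{n}\cdot n\mu(U_n)=\frac{h_n}{n}\tau+\frac{h_n}{n}\cdot o(1)=\frac{h_n}{n}\tau+o(1).
\]
For the error term, I would invoke the known spectral gap for the $\beta$-transformation on BV, which gives $\|P^i(1)-h\|_{BV}\le C\rho^i$ for some $\rho<1$, and hence $\|P^i(1)-h\|_\infty\le C\rho^i$ (up to a constant). Therefore
\[
\Bigl|\int_{U_n}\bigl(P^i(1)-h\bigr)\,dm\Bigr|\le \|P^i(1)-h\|_\infty\,m(U_n)\le C\rho^i m(U_n),
\]
and summing gives a total error of order $m(U_n)/(1-\rho)$. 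Since $m(U_n)\to 0$ (indeed $m(U_n)=O(1/n)$ by $n\mu(U_n)\to\tau$ and boundedness of $h$), this error is $o(1)$. Adding the two pieces yields the claim. The term $i=0$, where $P^0(1)=1\ne h$, contributes an extra harmless $O(m(U_n))=o(1)$, so it is absorbed in the error.

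The only non-routine ingredient is the choice of a decay estimate strong enough to survive summing $h_n\asymp n$ terms while still being eaten by the factor $m(U_n)\to 0$; the pointwise bound $\|P^i(1)-h\|_\infty\le C\rho^i$ is the cleanest way to achieve this, and it is available here precisely because $T_\beta$ (unperturbed) is uniformly expanding with the standard BV spectral gap, unlike the weaker polynomial rate \eqref{eq:PF-approximation} available for the composed, perturbed operators $\Pi_i$.
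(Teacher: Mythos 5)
Your proposal is correct and follows essentially the same route as the paper: the paper also decomposes $P^i(1)=h+Q^i(1)$ with $\|Q^i(1)\|_\infty\le\alpha^i$ from the BV spectral gap of the unperturbed operator, obtains the main term as $h_n\mu(U_n)=\frac{h_n}{n}\,n\mu(U_n)=\frac{h_n}{n}\tau+o(1)$, and bounds the error sum by $\frac{1}{1-\alpha}m(U_n)\to 0$. Your remarks on the $i=0$ term and the interpretation via $m(T_\beta^{-i}(U_n))$ are harmless additions that do not change the argument.
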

\begin{proof}
By hypothesis, for all $i\in\N$ and $g\in BV$ we have $P^i(g)=h\int g \cdot h \,dm + Q^i(g)$, where $\|Q^i(g)\|_\infty\leq \alpha^i \|g\|_{BV}$, for some $\alpha<1$. Then we can write:
\begin{align*}
\sum_{i=0}^{h_n-1}\int_{U_n} P^i(1)dm&=\sum_{i=0}^{h_n-1}\int h \left(\int 1\cdot h dm\right) \1_{U_n} dm+\sum_{i=0}^{h_n-1} \int Q^i(1)\1_{U_n} dm\\
&=\sum_{i=0}^{h_n-1}\int_{U_n} h dm +\sum_{i=0}^{h_n-1} \int Q^i(1)\1_{U_n} dm\\
&=\frac{h_n}{n} n\mu(U_n)+\sum_{i=0}^{h_n-1} \int Q^i(1)\1_{U_n} dm.
\end{align*}
The result will follow once we show that the second term on the right goes to 0, as $n\to\infty$. This follows easily because
$$
\sum_{i=0}^{h_n-1} \int Q^i(1)\1_{U_n} dm\leq \sum_{i=0}^{h_n-1} \alpha^i \int \1_{U_n} dm =\frac{1-\alpha^{h_n}}{1-\alpha}m(U_n)\xrightarrow[n\to\infty]{}0.
$$
\end{proof}

Since
$$
\sum_{i=0}^{h_n-1}m(X_i>u_n)=\sum_{i=0}^{h_n-1}\int_{U_n} \Pi_i(1)dm=\sum_{i=0}^{h_n-1}\int_{U_n} P^i(1)dm+ \sum_{i=0}^{h_n-1}\int_{U_n} \Pi_i(1)-P^i(1)dm,
$$
then condition \eqref{F_n} holds once we prove that the second term on the right goes to 0 as $n\to\infty$.

Let $\varepsilon>0$ be arbitrary. Since $\xi>1$ then $\sum_{i\geq0} \frac{\log i}{i^\xi}<\infty$, so there exists $N\geq\lfloor n^\gamma\rfloor$ such that $C_1\sum_{i\geq N} \frac{\log i}{i^\xi}<\eps/2$.

On the other hand, using the Lasota-Yorke inequalities (see \cite[Section~3]{FFV17}) for both $\Pi$ and $P$, we have that there exists some $C>0$ such that $|\Pi_i(1)-P^i(1)|\leq C$, for all $i\in\N$. Let $n$ be sufficiently large so that $C N m(U_n)<\eps/2$. Then
\begin{align*}
\left|\sum_{i=0}^{h_n-1}\int_{U_n}\Pi_i(1)-P^i(1)dm\right|&\leq \sum_{i=0}^{N-1}\int_{U_n}|\Pi_i(1)-P^i(1)|dm+\sum_{i=N}^{\infty}\int_{U_n}|\Pi_i(1)-P^i(1)|dm\\
&\leq C N m(U_n)+C_1\sum_{i\geq N} \frac{\log i}{i^\xi}<\eps/2+\eps/2=\eps.
\end{align*}

\subsection{Verification of condition $\D_q(u_{n,i})^*$}
\label{subsec:D-check}
We will use the following proposition, proved in \cite[Section~3]{FFV17}.
\begin{proposition}
\label{prop:decay-correlations}
Let $\phi\in BV$ and $\psi\in L^1(m)$. Then for the $\beta$ transformations $T_n=T_{\beta_n}$ we have that
\[
\left|\int \phi\circ \TF_i\psi \circ \TF_{i+t}dm-\int \phi\circ \TF_i dm\int  \psi \circ \TF_{i+t} dm \right|\leq \mbox{B} \lambda ^t\|\phi\|_{BV}\|\psi\|_{1},
\]
for some $\lambda<1$ and $\mbox{B}>0$ independent of $\phi$ and $\psi$.
\end{proposition}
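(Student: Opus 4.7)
The plan is to reduce the correlation to an estimate for the non-autonomous Perron--Frobenius operator acting on a mean-zero observable, and then extract exponential decay from the spectral properties of the stationary operator $P_\beta$ propagated to its perturbations.

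First I would dualise. Writing $\TF_{i+t}=\hat T_{i,t}\circ\TF_i$ with $\hat T_{i,t}:=T_{i+t}\circ\cdots\circ T_{i+1}$ and introducing the associated partial transfer operator $\hat\Pi_{i,t}:=P_{i+t}\circ\cdots\circ P_{i+1}$, iterated application of the duality $\int f\cdot(g\circ T)\,dm=\int P(f)\cdot g\,dm$ yields
\[
\int\phi\circ\TF_i\cdot\psi\circ\TF_{i+t}\,dm=\int\hat\Pi_{i,t}\bigl(\phi\cdot\Pi_i(1)\bigr)\cdot\psi\,dm,
\]
while $\int\phi\circ\TF_i\,dm=\int\phi\cdot\Pi_i(1)\,dm$ and $\int\psi\circ\TF_{i+t}\,dm=\int\hat\Pi_{i,t}(\Pi_i(1))\cdot\psi\,dm$. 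Setting $c_i:=\int\phi\cdot\Pi_i(1)\,dm$ and $g_i:=\phi\cdot\Pi_i(1)-c_i\Pi_i(1)$, the identity $\int\Pi_i(1)\,dm=1$ gives $\int g_i\,dm=0$, and the correlation difference becomes
\[
\int\psi\cdot\hat\Pi_{i,t}(g_i)\,dm,
\]
bounded in absolute value by $\|\psi\|_{1}\,\|\hat\Pi_{i,t}(g_i)\|_{\infty}$.

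Next I would establish the uniform decay estimate
\[
\|\hat\Pi_{i,t}(g)\|_{\infty}\le C\lambda^{t}\|g\|_{BV},\qquad g\in BV,\ \int g\,dm=0,
\]
with constants independent of $i$, for some $\lambda\in(0,1)$. For the stationary operator $P_\beta$ this is the classical spectral gap on $BV$. Since $|\beta_n-\beta|\le n^{-\xi}$ with $\xi>1$, each $T_j$ in the composition lies in a fixed small neighbourhood of $T_\beta$, so the step would close via a Keller--Liverani perturbation argument: the uniform Lasota--Yorke inequality for the family, combined with smallness of $P_{\beta_j}-P_\beta$ in the operator norm $\|\cdot\|_{BV\to L^1}$, transfers the spectral gap of $P_\beta$ to the composition $\hat\Pi_{i,t}$. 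Equivalently, one exhibits a $P_\beta$-invariant cone of densities which every $P_{\beta_j}$ strictly contracts in Hilbert projective metric, uniformly in $j$. The final assembly is then routine: the uniform Lasota--Yorke bound gives $\sup_i\|\Pi_i(1)\|_{BV}<\infty$, hence $\|g_i\|_{BV}\le C'\|\phi\|_{BV}$ uniformly in $i$, and combining with the decay yields the claimed bound $B\lambda^{t}\|\phi\|_{BV}\|\psi\|_{1}$.

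The main obstacle is the uniform decay estimate just discussed. A direct iteration of Lasota--Yorke only provides $\|\hat\Pi_{i,t}g\|_{BV}\le C\alpha^{t}\|g\|_{BV}+D\|g\|_{1}$, which is not a contraction on its own because the $L^1$ contribution is not automatically small for mean-zero $g$. Upgrading this to genuine exponential decay for the non-autonomous composition, uniformly in $i$, is precisely what the Keller--Liverani stability theorem or the cone-contraction framework used in \cite{CR07} is designed for, and this is the only nontrivial ingredient beyond the standard duality manipulations.
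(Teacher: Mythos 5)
Your dualisation is exactly the standard reduction and matches the route of the sources the paper defers to (the proposition is quoted from \cite[Section~3]{FFV17}, which rests on \cite{CR07}): the correlation equals $\int\psi\cdot\hat\Pi_{i,t}(g_i)\,dm$ with $g_i\in BV$ of zero mean and $\|g_i\|_{BV}\le C'\|\phi\|_{BV}$ by the uniform Lasota--Yorke inequality, so everything hinges on the uniform estimate $\|\hat\Pi_{i,t}(g)\|_\infty\le C\lambda^t\|g\|_{BV}$ for mean-zero $g\in BV$. Two caveats on how you propose to close that step. First, the Keller--Liverani theorem concerns the spectrum of a single perturbed operator; it does not by itself control a composition $P_{\beta_{i+t}}\circ\cdots\circ P_{\beta_{i+1}}$ of \emph{distinct} operators, since the spectral projections of the individual $P_{\beta_j}$ need not coincide, so ``transferring the spectral gap to the composition'' is not automatic. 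The mechanism that actually works for non-autonomous compositions is the one you mention only as an alternative: a uniform Doeblin--Fortet--Lasota--Yorke inequality combined with strict cone contraction in the Hilbert projective metric, which for $\beta$-maps requires establishing a uniform lower bound $\essinf\Pi_i(1)\ge c>0$ via a covering-type argument --- a point your outline omits and which is the genuinely non-trivial input of \cite{CR07}. Second, your perturbative framing is more restrictive than necessary: as noted in Remark~\ref{rem:dec-corr}, the estimate holds for an arbitrary sequence of $\beta$-maps with $\beta_n>1+c$, with no closeness to a fixed $\beta$ required, and the Conze--Raugi argument never uses $|\beta_n-\beta|\le n^{-\xi}$. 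With the cone/Doeblin--Fortet route substituted for Keller--Liverani and the positivity of the composed densities supplied, your outline is the intended proof.
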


\begin{remark}
\label{rem:dec-corr}
As it can be seen in \cite[Section~3]{CR07}, Proposition \ref{prop:decay-correlations} holds for any sequence $T_{\beta_1}, T_{\beta_2}, \ldots$ of $\beta$ transformations and not necessarily only for the ones that satisfy condition \eqref{eq:beta-rate}.
\end{remark}

Condition $\D_q(u_{n,i})^*$ follows from Proposition~\ref{prop:decay-correlations} by taking for each $i\leq H n-1$,
\begin{equation*}
\phi_i=\I_{D_{n,i}(x_1)} \mbox{ and }\psi_i=\I_{\bigcap_{j=2}^\varsigma\{\aa_n(I_j-i-t)\leq x_j\}},
\end{equation*}
where for every $j\leq H n-1$ we define
\begin{equation}
D_{n,j}(x):=B_{n,0}(x)\cap \bigcap_{\ell=1}^q (T_{j+\ell}\circ\ldots\circ T_{j+1})^{-1}(B_{n,0}(x))^c.
\end{equation}
Since we assume that \eqref{eq:beta-rate} holds, there exists a constant $C>0$ depending on $x_1$ but not on $i$ such that $\|\phi_i\|_{BV}<C$. Moreover, it is clear that $\|\psi_i\|_1\leq 1$. Hence,
\begin{multline*}
\left|m\left(A_{n,i}(x_1)\cap \bigcap_{j=2}^\varsigma\{\aa_n(I_j)\leq x_j\}\right)-m\left(A_{n,i}(x_1)\right)m\left(\bigcap_{j=2}^\varsigma\{\aa_n(I_j)\leq x_j\}\right)\right|\\
=\left|\int \phi_i\circ \TF_i\psi_i \circ \TF_{i+t}dm-\int \phi_i\circ \TF_i dm\int \psi_i \circ \TF_{i+t} dm \right|\leq \mbox{const}\; \lambda^t.
\end{multline*}
Thus, if we take $\gamma_i(n,t)=\mbox{const} \lambda^t$ and $t_n^*=(\log n)^2$, condition $\D_q(u_{n,i})^*$ is trivially satisfied.

\subsection{Verification of condition $\D'_q(u_{n,i})^*$}
We start by noting that we may neglect the first $\lfloor n^\gamma\rfloor$ random variables of the process $X_0, X_1, \ldots$, where $\gamma$ is such that $\gamma\xi>1$, for $\xi$ given as in \eqref{eq:beta-rate}.

In fact, by the Uniform Doeblin-Fortet-Lasota-Yorke inequality (DFLY) used in \cite[Section~3]{FFV17}, we have
\begin{align*}
m(\aa_n([\lfloor n^\gamma\rfloor,n))\leq x)-m(\aa_n([0,n))\leq x)&=m(\aa_n([0,\lfloor n^\gamma\rfloor))>0)\leq \sum_{i=0}^{\lfloor n^\gamma\rfloor-1} m(X_i>u_n)\\
&=\sum_{i=0}^{\lfloor n^\gamma\rfloor-1}\int\I_{U_n}\Pi_i(1) dm\leq C_0 n^{\gamma}m(U_n)\xrightarrow[n\to\infty]{}0.
\end{align*}
This way, we simply disregard the $\lfloor n^\gamma\rfloor$ random variables of $X_0, X_1, \ldots$ and start the blocking procedure, described in Section~\ref{subsec:blocks}, in $X_{\lfloor n^\gamma\rfloor}$ by taking $\L_{H,n,0}=\lfloor n^\gamma\rfloor$. We split the remaining $n-\lfloor n^\gamma\rfloor$ random variables into $k_n$ blocks as described in Section~\ref{subsec:blocks}. Our goal is to show that
$$
S_n':=\sum_{i=1}^{k_n}\sum_{j=\L_{H,n,i-1}}^{\L_{H,n,i}-1} \sum_{r>j}^{\L_{H,n,i}-1}m\left(Q^{(0)}_{q,n,j}\cap\{X_r>u_n\}\right)+\sum_{j=\L_{H,n,k_n}}^{Hn-1} \sum_{r>j}^{Hn-1}m\left(Q^{(0)}_{q,n,j}\cap\{X_r>u_n\}\right)
$$
goes to $0$.

We define for some $j,n,q\in\N_0$,
\begin{align*}
&R_{q,n,j}:=\min\left\{r\in\N:\; Q^{(0)}_{q,n,j}\cap\{X_{j+r}>u_n\}\neq\emptyset\right\},\\
&\tilde R_{q,n}:=\min\{R_{q,n,j},\; j=\lfloor n^\gamma\rfloor, \ldots,Hn-1\},\\
&L_n:=\max\{\ell_{H,n,i},\; i=1,\ldots,k_n\}\\
&\tilde L_n:=\max\{L_n,Hn-\L_{H,n,k_n}\}.
\end{align*}

We have
$$S_n'\leq \sum_{j=\lfloor n^\gamma\rfloor}^{Hn-1}\sum_{r\geq R_{q,n,j}}^{\tilde L_n} m\left(Q^{(0)}_{q,n,j}\cap\{X_{j+r}>u_n\}\}\right)= \sum_{j=\lfloor n^\gamma\rfloor}^{Hn-1}\sum_{r\geq R_{q,n,i}}^{\tilde L_n} \int \I_{D_{q,n,j}}\circ \TF_j \cdot \I_{U_n}\circ \TF_{j+r}\, dm,$$
where for every $j\leq H n-1$ we define
\begin{equation}
D_{q,n,j}:=U_n\cap\bigcap_{\ell=1}^q (T_{j+\ell}\circ\ldots\circ T_{j+1})^{-1}(U_n)^c.
\end{equation}
Using Proposition~\ref{prop:decay-correlations}, with $\phi=\I_{D_{q,n,j}}$ and $\psi=\I_{U_n}$, and the adjoint property of the operators, it follows that
$$
\int \I_{D_{q,n,j}}\circ \TF_j \cdot \I_{U_n}\circ \TF_{j+r}\, dm\leq \int \I_{D_{q,n,j}} \Pi_j(1)dm \int \I_{U_n} \Pi_{j+r}(1)dm+B \lambda^r\|\I_{D_{q,n,j}}\|_{BV}\|\I_{U_n}\|_1.
$$
Using (DFLY), we have
$$
\int \I_{D_{q,n,j}}\circ \TF_j \cdot \I_{U_n}\circ \TF_{j+r}\, dm\leq C_0^2 m(U_n)^2+B C_2\lambda^r m(U_n)
$$
for some $C_2>0$ (independent of $n$) such that $\|\I_{D_{q,n,j}}\|_{BV}\leq C_2$. Hence,
\begin{align*}
S_n'&\leq \sum_{j=\lfloor n^\gamma\rfloor}^{Hn-1}\sum_{r\geq R_{q,n,i}}^{\tilde L_n} \left(C_0^2 m(U_n)^2+B C_2\lambda^r m(U_n)\right)\leq C_0^2 Hn\tilde L_n m(U_n)^2+B C_2 m(U_n)Hn\sum_{r\geq \tilde R_{q,n}}^{\tilde L_n} \lambda^r\\
&\leq C_0^2 Hn\tilde L_n m(U_n)^2+BC_2 m(U_n)Hn\lambda^{\tilde R_{q,n}}\frac{1}{1-\lambda}.
\end{align*}
Now we show that $\tilde L_n=o(n)$. To see this, observe that each $\ell_{H,n,i}$ is defined, in this case, by the largest integer $\ell$ such that
$$\sum_{j=\L_{H,n,i-1}}^{\L_{H,n,i-1}+\ell-1}m(X_j>u_n)\leq\frac{1}{k_n}\sum_{j=\lfloor n^\gamma\rfloor}^{Hn-1}m(X_j>u_n).$$
Using \eqref{eq:iterated-measure}, it follows that
$$\ell_{H,n,i}\mu(U_n)(1+o(1))\leq\frac{Hn-\lfloor n^\gamma\rfloor}{k_n}\mu(U_n)(1+o(1)).$$
On the other hand, by definition of $\ell_{H,n,i}$ we must have
$$\sum_{j=\L_{H,n,i-1}}^{\L_{H,n,i-1}+\ell_{H,n,i}-1}m(X_j>U_n)>\frac{1}{k_n}\sum_{j=\lfloor n^\gamma\rfloor}^{Hn-1}m(X_j>u_n)-m(X_{\L_{H,n,i-1}+\ell_{H,n,i}}>u_n).$$
Using \eqref{eq:iterated-measure} again, we have
$$\ell_{H,n,i}\mu(U_n)(1+o(1))>\frac{Hn-\lfloor n^\gamma\rfloor}{k_n}\mu(U_n)(1+o(1))-\mu(U_n)(1+o(1)).$$
Together with the previous inequality, we have
\begin{equation}
\label{eq:ln-estimate}
\ell_{H,n,i}=\frac{Hn-\lfloor n^\gamma\rfloor}{k_n}(1+o(1))=o(n)
\end{equation}
for every $i=1,\ldots,k_n$ and
$$Hn-\L_{H,n,k_n}=Hn-\lfloor n^\gamma\rfloor-\sum_{i=1,\ldots,k_n}\ell_{H,n,i}=(Hn-\lfloor n^\gamma\rfloor)o(1)=o(n)$$
so $\tilde L_n=o(n)$ follows at once. Using this estimate, the fact that $\lim_{n\to\infty}n\mu(U_n)=\tau$ and $h\in BV$, we have $C_0^2 Hn\tilde L_n m(U_n)^2\to 0$.

In order to prove that $\D'_q(u_{n,i})^*$ holds, we need to show that $\tilde R_{q,n}\to\infty$, as $n\to\infty$.
To do that we consider two cases, whether the orbit of $\zeta$ hits $1$ or not.

We will consider that the maps $T_i$, for all $i\in\N_0$, are defined in $S^1$ by using the usual identification $0\sim 1$. Observe that the only possible point of discontinuity of such maps is $0\sim 1$. Moreover, $\lim_{x\to 0^+}T_i(x)=0$ and $\lim_{x\to1^-}T_i(x)=\beta_i-\lfloor \beta_i\rfloor.$

\subsubsection{The orbit of $\zeta$ by the unperturbed $T_\beta$ map does not hit 1} We mean that for all $j\in\N_0$ we have $T^j(\zeta)\neq 1$.

We take $q=p$, where $p\in\N$ is such that $T^p(\zeta)=\zeta$ and $T^j(\zeta)\neq\zeta$ for all $j<p$. Let
\begin{equation}
\label{eq:def-eps-n}
\eps_n:=|\beta_{\lfloor n^\gamma\rfloor}-\beta|.
\end{equation}
By \eqref{eq:beta-rate} and choice of $\gamma$, we have that $\eps_n=o(n^{-1})$. Also let $\delta>0$, be such that $B_\delta(\zeta)$ is contained on a domain of injectivity of all $T_i$, with $i\geq\lfloor n^\gamma\rfloor$.

Let $J\in\N$ be chosen. Using a continuity argument, we can show that there exists $C:=C(J,q)>0$ such that
$$
\dist(T_{i+j}\circ\ldots\circ T_{i+1} (\zeta), T^j(\zeta))< C\eps_n, \mbox{ for all $j=1, \ldots, J$}
$$
and moreover $U_n\cap T_{i+j}\circ\ldots\circ T_{i+1}(U_n)=\emptyset$, for all $j\leq J$ such that $j/q-\lfloor j/q\rfloor>0$.

We want to check that if $x\in Q^{(0)}_{q,n,i}$ for some $i\geq\lfloor n^\gamma\rfloor$, \ie $\TF_i(x)\in D_{q,n,i}$, then $X_{i+j}(x)\leq u_n$, for all $j=1,\ldots,J$. By the assumptions above, we only need to check the latter for all $j=1, \ldots, J$ such that $j/q-\lfloor j/q\rfloor=0$, \ie for all $j=sq$, where $s=1, \ldots, \lfloor J/q\rfloor$.

By definition of $Q^{(0)}_{q,n,i}$ the statement is clearly true when $s=1$. Now, we consider $s>1$ and let $x\in Q^{(0)}_{q,n,i}$. We have
$$
\dist(\TF_{i+sq}(x), T_{i+sq}\circ\ldots\circ T_{i+q+1}(\zeta))>(\beta-\eps_n)^{(s-1)q}\dist(\TF_{i+q}(x),\zeta).
$$
On the other hand,
$$
\dist(T_{i+sq}\circ\ldots\circ T_{i+q+1}(\zeta), \zeta)\leq C\eps_n.
$$
Hence,
\begin{align*}
\dist(\TF_{i+sq}(x), \zeta)&\geq \dist(\TF_{i+sq}(x), T_{i+sq}\circ\ldots\circ T_{i+q+1}(\zeta))- \dist(T_{i+sq}\circ\ldots\circ T_{i+q+1}(\zeta), \zeta)\\
&\geq (\beta-\eps_n)^{(s-1)q}\dist(\TF_{i+q}(x),\zeta)-C\eps_n\\
&\geq (\beta-\eps_n)^{(s-1)q}\frac{m(U_n)}{2}-C\eps_n, \mbox{ since $x\in Q^{(0)}_{q,n,i}\Rightarrow X_{i+q}(x)\leq u_n \Leftrightarrow\TF_{i+q}(x)\notin U_n$}\\
&>\frac{m(U_n)}{2}, \mbox{ for $n$ sufficiently large, since $\eps_n=o(n^{-1})$.}
\end{align*}
This shows that $\TF_{i+sq}(x)\notin U_n$, which means that $X_{i+sq}(x)\leq u_n$.

\subsubsection{$\zeta=0\sim 1$} In this case we proceed in the same way as in \cite[Section~3.3]{AFV15}, which basically corresponds to considering two versions of the same point: $\zeta^+=0$ and $\zeta^-=1$. Note that $\zeta^+$ is a fixed point for all maps considered and $\zeta^-$ is periodic of prime period $p$.

As the previous case, we take $q=p$. We observe that $D_{q,n,i}$ has two connected components, one to the right of 0 and the other to the left of 1, where none of the two points belongs to the set. Let $J\in\N$ be fixed as before. A continuity argument as the one used before allows us to show that the points of the components of $D_{q,n,i}$ do not return to $U_n$ before $J$ iterates, also. Note that, the maps are orientation preserving so there is no switching as described in  \cite[Section~3.3]{AFV15}.

\subsection{Verification of condition \eqref{eq:EIH}} Similarly to the previous condition, we disregard the first $\lfloor n^\gamma\rfloor$ random variables of $X_0, X_1, \ldots$ and start the blocking procedure in $X_{\lfloor n^\gamma\rfloor}$ by taking $\L_{H,n,0}=\lfloor n^\gamma\rfloor$. We want to show that
\[\lim_{n\to\infty}\max_{i=1,\ldots,k_n}\left\{\left|\theta\sum_{j=\L_{H,n,i-1}}^{\L_{H,n,i}-1}m(X_j>u_n)-\sum_{j=\L_{H,n,i-1}}^{\L_{H,n,i}-1}m\left(Q_{q,n,j}^{(0)}\right)\right|\right\}=0.\]

Let $\eps_n$ be defined as in \eqref{eq:def-eps-n} and let $\delta_n$ be such that $U_n=B_{\delta_n}(\zeta)$. For simplicity, we assume that we are using the usual Riemannian metric so that we have a symmetry of the balls, which means that $|U_n|=m(U_n)=2\delta_n$.

We also assume that $\zeta$ is a periodic point of prime period $p$ with respect to the unperturbed map $T=T_\beta$ and the orbit of $\zeta$ does not hit $0\sim 1$. In this case, we take $\theta=1-\beta^{-q}$ with $q=p$ and check \eqref{eq:EIH}.

Using a continuity argument we can show that there exists $C:=C(J,q)>0$ such that
$$
\dist(T_{i+q}\circ\ldots\circ T_{i+1} (\zeta), \zeta)< C\eps_n.
$$
We define two points $\xi_u$ and $\xi_l$ of $B_{\delta_n}(\zeta)$ on the same side with respect to $\zeta$ such that $\dist(\xi_u,\zeta)=(\beta-\eps_n)^{-q}\delta_n+C\eps_n$ and $\dist(\xi_l,\zeta)=(\beta+\eps_n)^{-q}\delta_n-(\beta+\eps_n)^{-q}C\eps_n$. Recall that for all $i\geq\lfloor n^\gamma\rfloor$, we have that $(\beta-\eps_n)^q\leq \beta_{i+1}\cdot\ldots\cdot\beta_{i+q}\leq (\beta+\eps_n)^q$.

Since we are composing $\beta$ transformations, then for all $i\geq\lfloor n^\gamma\rfloor$, we have
$$
\dist(T_{i+q}\circ\ldots\circ T_i(\xi_u), T_{i+q}\circ\ldots\circ T_i(\zeta))\geq \delta_n+(\beta-\eps_n)^q C\eps_n.
$$
Using the triangle inequality it follows that
$$
\dist(T_{i+q}\circ\ldots\circ T_{i+1}(\xi_u), \zeta)\geq \delta_n.
$$
Similarly, $\dist(T_{i+q}\circ\ldots\circ T_{i+1}(\xi_l), T_{i+q}\circ\ldots\circ T_{i+1}(\zeta))\leq \delta_n-C\eps_n$ and
$$
\dist(T_{i+q}\circ\ldots\circ T_{i+1}(\xi_l), \zeta)\leq \delta_n.
$$
If we assume that both $\xi_u$ and $\xi_l$ are on the right hand side with respect to $\zeta$ and $\xi_u^*$ and $\xi_l^*$ are the corresponding points on the left hand side of $\zeta$, then
$$
(\zeta-\delta_n,\xi_u^*]\cup[\xi_u, \zeta+\delta_n)\subset D_{q,n,i}\subset (\zeta-\delta_n,\xi_l^*]\cup[\xi_l, \zeta+\delta_n).
$$
Hence,
$$
\delta_n-(\beta-\eps_n)^{-q}\delta_n-C\eps_n\leq \frac{1}{2}m(D_{q,n,i})\leq \delta_n-(\beta+\eps_n)^{-q}\delta_n+(\beta+\eps_n)^{-q}C\eps_n.
$$
Since $\eps_n=o(n^{-1})=o(\delta_n)$ then we easily get
\begin{equation*}
\lim_{n\to\infty}\frac{m(D_{q,n,i})}{m(U_n)}=1-\beta^{-q}.
\end{equation*}

Observe that by \eqref{eq:iterated-measure}, $m(Q^{(0)}_{q,n,i})=m(\TF_i^{-1}(D_{q,n,i}))=\mu(D_{q,n,i})+o(n^{-1})$ and $m(X_i>u_n)=\mu(U_n)+o(n^{-1})$. Hence, we have that
\begin{equation*}
\lim_{n\to\infty}\frac{m(Q^{(0)}_{q,n,i})}{m(X_i>u_n)}=\lim_{n\to\infty}\frac{\mu(D_{q,n,i})}{\mu(U_n)}.
\end{equation*}

The density $\frac{d\mu}{dm}$, which can be found in \cite[Theorem~2]{P60}, is sufficiently regular so that, as in \cite[Section~7.3]{FFT15}, one can see that
$$
\lim_{n\to\infty}\frac{\mu(D_{q,n,i})}{\mu(U_n)}=\lim_{n\to\infty}\frac{m(D_{q,n,i})}{m(U_n)}.
$$
It follows that
$$
\lim_{n\to\infty}\frac{m(Q^{(0)}_{q,n,i})}{m(X_i>u_n)}=1-\beta^{-q}.
$$
Since, as we have seen in \eqref{eq:ln-estimate}, we can write $\ell_{H,n,i}=\frac{Hn}{k_n}(1+o(1))$, then the previous equation can easily be used to prove that condition \eqref{eq:EIH} holds, with $\theta=1-\beta^{-q}$.

For the case $\zeta=0\sim1$ the argument will follow similarly, although we have to take into account the fact that the density is discontinuous at $0\sim1$. By \cite{P60} we have that $$\frac{d\mu}{dm}(x)=\frac1{M(\beta)}\sum_{x<T^n (1)} \frac1{\beta^n},$$ where $M(\beta):=\int_0^1\sum_{x<T^n (1)} \frac1{\beta^n}dm$.  In this case, we have $\theta=\frac{d\mu}{dm}(0)(1-\beta^{-1})+\frac{d\mu}{dm}(1)(1-\beta^{-q})$.

\subsection{Verification of condition \eqref{eq:multiplicity}} Once again, we disregard the first $\lfloor n^\gamma\rfloor$ random variables of $X_0, X_1, \ldots$. We want to show that
\[\lim_{n\to\infty}\max_{j=\lfloor n^\gamma\rfloor,\ldots,Hn-1}\left\{\left|\frac{m(A_{n,j}(x/a_n))}{m\left(Q^{(0)}_{q,n,j}\right)}-(1-\pi(x))\right|\right\}=0.\]

Observing that by \eqref{eq:iterated-measure}, $m(B_{n,i}(x))=m(\TF_i^{-1}(B_{n,0}(x)))=\mu(B_{n,0}(x))+o(n^{-1})$ and using an argument similar to the one of the previous condition, we have
\begin{equation*}
\lim_{n\to\infty}\frac{m(A_{n,i}(x))}{m(B_{n,i}(x))}=\lim_{n\to\infty}\frac{\mu(D_{n,i}(x))}{\mu(B_{n,0}(x))}=\lim_{n\to\infty}\frac{m(D_{n,i}(x))}{m(B_{n,0}(x))}=\theta
\end{equation*}
where
\begin{equation}
D_{n,j}:=\TF_j^{-1}(Q^{(0)}_{q,n,j})=U_n\cap\bigcap_{\ell=1}^q (T_{j+\ell}\circ\ldots\circ T_{j+1})^{-1}(U_n)^c
\end{equation}
and with the same $\theta$ as before. Hence,
\begin{equation*}
\lim_{n\to\infty}\frac{m(A_{n,i}(x))}{m(Q^{(0)}_{q,n,i})}=\lim_{n\to\infty}\frac{\theta m(B_{n,i}(x))}{\theta m(X_i>u_n)}=\lim_{n\to\infty}\frac{m(B_{n,i}(x))}{m(X_i>u_n)}
\end{equation*}
Let $\tilde B_{n,i}(x)$ be the set $B_{n,i}(x)$ associated to the unperturbed dynamical system given by $\TF_n=(T_{\beta})^n$ and $\tilde X_i$ the corresponding unperturbed random variables. Using a continuity argument we can show that $m(B_{n,i}(x))\sim m(\tilde B_{n,i}(x))$ and $m(X_i>u_n)\sim m(\tilde X_i>u_n)$, so that
\begin{equation*}
\lim_{n\to\infty}\frac{m(B_{n,i}(x))}{m(X_i>u_n)}=\lim_{n\to\infty}\frac{m(\tilde B_{n,i}(x))}{m(\tilde X_i>u_n)}=\lim_{n\to\infty}\frac{\mu(\tilde B_{n,i}(x))}{\mu(\tilde X_i>u_n)}=\lim_{n\to\infty}\frac{\mu(\tilde B_{n,0}(x))}{\mu(U_n)}
\end{equation*}

For this unperturbed stationary process, it has been proved in \cite[Section~3]{FFM18} that $\lim_{n\to\infty}\frac{\mu(\tilde B_{n,0}(x/a_n))}{\mu(U_n)}=1-\pi(x)$, where $\pi(x)$ is the distribution given in \eqref{eq:POT-multiplicity} for the POT MREPP $a_nA_n$ and given in \eqref{eq:AOT-multiplicity} for the AOT MREPP $a_nA_n$. So, $\lim_{n\to\infty}\frac{m(A_{n,i}(x/a_n))}{m(Q^{(0)}_{q,n,i})}=1-\pi(x)$ for that same distribution $\pi(x)$ and for any $i=\lfloor n^\gamma\rfloor,\ldots,Hn-1$. Hence, \eqref{eq:multiplicity} follows at once.

\subsection{Verification of condition $U\!LC_q(u_{n,i})$} We want to see that, for all $H\in\N$ and $y>0$,
\[\lim_{n\to\infty}\;\sum_{i=1}^{k_n}\int_0^{\infty}\e^{-y x}\delta_{n,\L_{H,n,i-1},\ell_{H,n,i}}(x/a_n)dx=0,\]
\[\lim_{n\to\infty}\;\int_0^{\infty}\e^{-x}\delta_{n,\L_{H,n,k_n},Hn-\L_{H,n,k_n}}(x/a_n)dx=0,\]
\[\mbox{and}\quad\lim_{n\to\infty}\;\sum_{i=1}^{k_n}\int_0^{\infty}\e^{-y x}\delta_{n,\L_{H,n,i-1},\ell_{H,n,i}-t_{H,n,i}}(x/a_n)dx=0\]

where $a_n$ is as in \eqref{eq:multiplicity} and $\delta_{n,s,\ell}(x)$ as in \eqref{eq:delta-definition}. Then, for all $x\in\R_0^+$,
\begin{align*}
\delta_{n,s,\ell}(x)&\leq\sum_{\kappa=1}^{\lfloor\ell/q\rfloor}\sum_{j=s+\ell-\kappa q}^{s+\ell-1}m\left(Q^{(\kappa)}_{q,n,j}\right)+\sum_{j=s}^{s+\ell-1}\sum_{\kappa>\lfloor\ell/q\rfloor}m\left(Q^{(\kappa)}_{q,n,j}\right)+\sum_{j=1}^q m\left(U^{(0)}_{q,n,s+\ell-j}\right)\\
&\leq\sum_{\kappa=1}^{\infty}\sum_{j=s+\ell-\kappa q}^{s+\ell-1}m\left(Q^{(\kappa)}_{q,n,j}\right)+\sum_{j=1}^q m\left(U^{(0)}_{q,n,s+\ell-j}\right)
\end{align*}

hence for all $x\in\R_0^+$ and $y\in\R^+$, we have
\[\sum_{i=1}^{k_n}\int_0^{\infty}\e^{-y x}\delta_{n,\L_{H,n,i-1},\ell_{H,n,i}}(x/a_n)dx \leq \frac{1}{y}\sum_{i=1}^{k_n}\left(\sum_{\kappa=1}^{\infty}\sum_{j=\L_{H,n,i}-\kappa q}^{\L_{H,n,i}-1}m\left(Q^{(\kappa)}_{q,n,j}\right)+\sum_{j=1}^q m\left(U^{(0)}_{q,n,\L_{H,n,i}-j}\right)\right),\]
\[\int_0^{\infty}\e^{-x}\delta_{n,\L_{H,n,k_n},Hn-\L_{H,n,k_n}}(x/a_n)dx \leq \sum_{\kappa=1}^{\infty}\sum_{j=Hn-\kappa q}^{Hn-1}m\left(Q^{(\kappa)}_{q,n,j}\right)+\sum_{j=1}^q m\left(U^{(0)}_{q,n,Hn-j}\right)\]
\[\mbox{\quad and \quad}\sum_{i=1}^{k_n}\int_0^{\infty}\e^{-y x}\delta_{n,\L_{H,n,i-1},\ell_{H,n,i}-t_{H,n,i}}(x/a_n)dx\]
\[\qquad\leq \frac{1}{y}\sum_{i=1}^{k_n}\left(\sum_{\kappa=1}^{\infty}\sum_{j=\L_{H,n,i}-t_{H,n,i}-\kappa q}^{\L_{H,n,i}-t_{H,n,i}-1}m\left(Q^{(\kappa)}_{q,n,j}\right)+\sum_{j=1}^q m\left(U^{(0)}_{q,n,\L_{H,n,i}-t_{H,n,i}-j}\right)\right).\]

Let $\tilde Q^{(\kappa)}_{q,n,j}$ and $\tilde U^{(0)}_{q,n,j}$ be the corresponding sets $Q^{(\kappa)}_{q,n,j}$ and $U^{(0)}_{q,n,j}$ associated to the unperturbed dynamical system given by $\TF_n=(T_{\beta})^n$. Using a continuity argument we can show that $m\left(Q^{(\kappa)}_{q,n,j}\right)\sim m\left(\tilde Q^{(\kappa)}_{q,n,j}\right)$ and $m\left(U^{(0)}_{q,n,j}\right)\sim m\left(\tilde U^{(0)}_{q,n,j}\right)$. For this unperturbed stationary process, it has been proved in \cite[Section~3]{FFM18} that
\[m\left(\tilde Q^{(\kappa)}_{q,n,j}\right)\sim\theta(1-\theta)^{\kappa}m\left(\tilde U^{(0)}_{q,n,j}\right).\]
so we have $m\left(Q^{(\kappa)}_{q,n,j}\right)\sim\theta(1-\theta)^{\kappa}m\left(U^{(0)}_{q,n,j}\right)$. Additionally, using \eqref{eq:iterated-measure} (once again neglecting the first $\lfloor n^\gamma\rfloor$ random variables), $m\left(U^{(0)}_{q,n,j}\right)=m(\TF_j^{-1}(U_n))\sim\mu(U_n)\sim m(U_n)$, so $m\left(Q^{(\kappa)}_{q,n,j}\right)\sim\theta(1-\theta)^{\kappa}m(U_n)$ and, by \eqref{eq:kn-sequence},
\[\sum_{i=1}^{k_n}\left(\sum_{\kappa=1}^{\infty}\sum_{j=\L_{H,n,i}-\kappa q}^{\L_{H,n,i}-1}m\left(Q^{(\kappa)}_{q,n,j}\right)+\sum_{j=1}^q m\left(U^{(0)}_{q,n,\L_{H,n,i}-j}\right)\right)\]
\[\sim k_n\left(\sum_{\kappa=1}^{\infty}\kappa q\theta(1-\theta)^{\kappa}m(U_n)+q m(U_n)\right)=\frac{k_n q}{\theta}m(U_n)\xrightarrow[n\to\infty]{}0.\]

Similarly,
\[\sum_{\kappa=1}^{\infty}\sum_{j=Hn-\kappa q}^{Hn-1}m\left(Q^{(\kappa)}_{q,n,j}\right)+\sum_{j=1}^q m\left(U^{(0)}_{q,n,Hn-j}\right)\sim\frac{q}{\theta}m(U_n)\xrightarrow[n\to\infty]{}0\]
and
\[\sum_{i=1}^{k_n}\left(\sum_{\kappa=1}^{\infty}\sum_{j=\L_{H,n,i}-t_{H,n,i}-\kappa q}^{\L_{H,n,i}-t_{H,n,i}-1}m\left(Q^{(\kappa)}_{q,n,j}\right)+\sum_{j=1}^q m\left(U^{(0)}_{q,n,\L_{H,n,i}-t_{H,n,i}-j}\right)\right)\]
\[\sim k_n\left(\sum_{\kappa=1}^{\infty}\kappa q\theta(1-\theta)^{\kappa}m(U_n)+q m(U_n)\right)=\frac{k_n q}{\theta}m(U_n)\xrightarrow[n\to\infty]{}0.\]
\section{Random dynamical systems}
\label{sec:random}

We now give another  example of  a non-stationary system in the form of  a  {\em fibred dynamical system} constructed  by taking Lasota-Yorke maps  on the fibers; we refer in particular to the paper \cite{DFGV18}.

Let us consider the unit interval  $I=[0, 1],$ endowed with the Borel $\sigma$-algebra $\mathcal B$ and the Lebesgue measure $m.$ Furthermore, let
 $$
  \var (g)=\inf_{h=g (\text{mod} \ m)}\sup_{0=s_0<s_1<\ldots <s_n=1}\sum_{k=1}^n \lvert h(s_k)-h(s_{k-1})\rvert.
 $$
 the variation of the function $g\in L^1(m).$ We define  $BV(I,m)$ (sometimes shortened in $BV$), as the  Banach space with respect to the norm
\[
 \lVert h\rVert_{BV}=\var (h)+\lVert h\rVert_1.
 \]

For a piecewise $C^2$ function $f:[0,1]\to [0,1]$, set $\delta (f)=\esinf_{x\in [0,1]} \lvert f'\rvert$ and let $N(f)$ denote the number of intervals of monotonicity of $f$. Then let  $(\Omega, \mathcal{F}, \mathbb Q)$ be  a probability space and let  $\sigma :\Omega\rightarrow \Omega$ be an  invertible $\mathbb Q$-preserving transformation. We will assume that
$\mathbb Q$ is ergodic.
Consider now a measurable map $\omega\mapsto f_\omega$, $\omega \in \Omega$  of piecewise $C^2$ maps on $[0, 1]$ defined as above and such that the map $(\omega, x)\mapsto (\mathcal P_\omega H(\omega, \cdot))(x)$ is $\mathbb Q \times m$-measurable and moreover
\begin{equation}\label{sd}
 N:=\sup_{\omega \in \Omega} N(f_\omega)<\infty, \  \delta:=\inf_{\omega \in \Omega} \delta (f_\omega)>1, \ \mbox{ and } D:=\sup_{\omega \in \Omega}|f''_\omega|_\infty<\infty.
\end{equation}
 $H$ is any $\mathbb Q\times m$ measurable function and $H(\omega, \cdot)\in L^1(m)$ for a.e. $\omega\in \Omega;$ finally $\mathcal P_\omega$ denotes the transfer (Perron-Fr\"obenius) operator associated to $f_{\omega}.$
For each $n\in \mathbb N$ and $\omega \in \Omega$, we set
\[
f_{\omega}^n=f_{\sigma^{n-1} \omega} \circ \cdots \circ f_{\omega}.
\]
For next purposes, we need two more assumption.
\begin{itemize} \item  First we  ask that
 the following uniform covering condition holds:
for every subinterval  $J\subset I, \exists k= k(J)$ s.t. for a.e.   $\omega \in \Omega, f_\omega^{k}(J) = I.$
\item Then we require the existence of $N\in \N$ such that for each $a>0$  and any sufficiently large $n\in \mathbb N$, there is  $c>0$ such that
\begin{equation*}
\esinf  \mathcal P_\omega^{Nn} h\ge c/2 \lVert h\rVert_1, \quad \text{for every $h\in C_a$ and a.e. $\omega \in \Omega$,}
\end{equation*}
where $C_a:=\{ \phi \in BV: \phi\ge 0 \text{ and } \var(\phi)\le a\int \phi\, dm \}.$ \\This cone-type condition will guarantee that the density $h_{\omega}$ constructed below is strictly positive, namely
\begin{equation}\label{INF}
\esinf h_\omega \ge c/2, \  \text{for a.e.} \  \omega \in \Omega.
\end{equation}
\end{itemize}
The next step is to introduce the probability governing the extreme value distributions. First of all we can associate to our collection of mappings on $I,$ $f_{\omega} \colon  I \to I$, $\omega \in \Omega$ the  skew product transformation  $\tau \colon  \Omega \times I \to  \Omega \times I$  defined  by
\begin{equation}
\label{eq:tau}
\tau(\omega, x)=( \sigma \omega, f_{\omega}(x)).
\end{equation}
The preceding bunch of assumptions on the maps $f_{\omega},$ allows us to show that  there exist a unique measurable and nonnegative function $h_{\omega}\colon \Omega \times I \to \mathbb R$ with the property that
	$h_\omega:=h(\omega, \cdot) \in BV$, $\int h_\omega \, dm=1$, $\mathcal L_\omega(h_\omega)=h_{\sigma \omega}$ for a.e. $\omega \in \Omega$ and
		\begin{equation}\label{ESS}
\essup_{\omega\in\Omega} \|h_\omega\|_{BV}<\infty.
\end{equation}
If we now define  a probability measure $\mu$ on $\Omega \times I$ by
\begin{equation}\label{mu}
\mu(A \times B)=\int_{A\times B} h_{\omega} \, d(\mathbb Q\times m), \quad \text{for $A\in \mathcal F$ and $B\in \mathcal B$,}
\end{equation}
then it follows  that $\mu$ is invariant with respect to $\tau$.
Furthermore, $\mu$ is obviously absolutely continuous with respect to $\mathbb Q \times m$ and is the only measure with these properties.

Let us now consider for any $\omega \in  \Omega$ the measures $\mu_{\omega}$ on the measurable space $(I,
\mathcal B)$, defined by $d\mu_{\omega}=h_
{\omega}dm.$
We recall here two important properties of these measures.
First, the so-called {\em equivariant property}: $f^*_{\omega}\mu_{\omega}=\mu_{\sigma \omega}$.
Second, the {\em disintegration} of $\mu$ on the marginal $\mathbb{Q}:$ if $A$ is any measurable set in $\mathcal F \times  \mathcal B,$ and $A_{\omega}=\{x; (\omega, x)\in A\},$ the {\em section} at $\omega,$ then $\mu(A)=\int \mu_{\omega}(A_{\omega}) d\mathbb{Q}(\omega).$

{\em The conditional (or sample) measure $\mu_{\omega}$ will constitute the probability underlying our random processes, which we called $\mathbb{P}$ in the preceding sections.}

After this preparatory work we can now state the decay of correlations result which will be used later on.
Let $\mu_\omega$ be, as above,  the measure on $X$ given by  $d\mu_\omega=h_\omega dm$ for $\omega \in \Omega$.Then there exists $K>0$ and $\rho \in (0, 1)$  such that
 \begin{equation}\label{buzzi}
 \bigg{\lvert} \int \mathcal \phi\ \psi \circ f_{\omega}^n\, d\mu_{\omega} -\int \phi \, d\mu_\omega \cdot \int \psi \, d\mu_{\sigma^n \omega} \bigg{\rvert} \le K\rho^n
 \lVert \psi\rVert_{1} \cdot \lVert \phi \rVert_{BV} ,
\end{equation}
for $n\ge 0$, $\psi \in L^1(m)$ and $\phi \in BV(X, m)$; $\lVert \cdot\rVert_{1}$ denotes the $L^1$ norm with respect to $m.$\footnote{The result in \cite{DFGV18}, Lemma 4,  is stated in a different manner. It requires $\psi$ in $L^{\infty}(m)$. Since the density $h_{\omega}$ is in $L^{\infty}(m)$ too as an element of $BV(X, m),$ and moreover is essentially bounded uniformly in $\omega$ by (\ref{ESS}), we get the  $\lVert \cdot\rVert_{1}$ norm on the right hand side of (\ref{buzzi}).}

We now choose $\Omega=Y^{\mathbb{Z}},$ where $Y=(1, \cdots, m)$ is a finite alphabet with $m$ letters. We associate to each letter a map satisfying the requirements given above: we call them {\em random Lasota-Yorke} maps. The map $\sigma$ will therefore be the bilateral shift and $\mathbb{Q}$ any ergodic shift-invariant non-atomic ergodic probability measure, for instance, and it is the choice we do here, a Bernoulli measure with weights $p_1, \cdots, p_m.$

We now consider the process given by $X_k:= \phi \circ f_{\omega}^k, k\in \mathbb{N},$ where $f_{\omega}^k:=f_{\omega_{k}} \circ \cdots \circ f_{\omega_1},$ being $\omega_j\in Y, j=1,\cdots k,$ the first $k$ symbols of the word $\omega$. The function $\phi: I\rightarrow \mathbb{R}\cup\{\pm\}$ achieves a global maximum at $z\in I$ (we allow $\phi(z)=+\infty$), being of the following form:  $\phi(x)=g(\text{dist}(x,z),$  where $g:[0, +\infty)\rightarrow \mathbb{R}\cup\{+\infty\}$ is such that $0$ is a global maximum ($g(0)$ may be $+\infty$,) and $g$ is a strictly decreasing bijection in a neighborhood of $0.$ Finally $g$ assumes one of three types of behavior which we recalled in the statement of Theorem \ref{T3}. We now introduce the marginal measure $\mu_I$ on $I$ as: $\mu_I(B)=\int_{\Omega} \mu_{\omega}(B)d\mathbb{Q}(\omega),$ with $B$ a measurable subset of $I.$ As in  \cite{FFV17} we consider all the boundary levels equal $u_{n,i}=u_n, i=1, \cdots, n-1,$ where $u_n$ is determined by the marginal measure $\mu_I$ so that
\begin{equation}\label{MAR}
\mu_n=\inf\{u\in \mathbb{R}: \mu_I(\{x\in I: \phi(x)\le u\})\ge 1-\frac{\tau}{n}\},
\end{equation}
for some $\tau>0.$ With this choice and  by Lemma 9 of \cite{RSV14}  we have
\begin{equation}\label{SEQ}
\sum_{i=0}^{Hn-1}\mu_{\sigma^i\omega}(\{x\in I: \phi(x)>u_n\})\rightarrow \tau, \ \text{as} \ n\rightarrow \infty,
\end{equation}
which is our equation (\ref{F_hn}) for the fibred systems. From now  on we will set $U_n:=\{x\in I: \phi(x)>u_n\}$ which, by the choice of the function $g,$ is an open neighborhood of the point $z.$

Condition $\D_q(u_{n,i})^*$  with $\mathbb{P}=\mu_{\omega}$ can now be worked out easily thanks to the decay of correlations (\ref{buzzi}), which takes care of observables given by characteristic functions, see the function $\psi\in L^1(m)$ in (\ref{buzzi}). We defer for the details to the second part of Proposition 4.3 in \cite{FFV17} which is the same as in the present context.

We now go  the condition $\D'_q(u_{n,i}^*)$. We should first of all elaborate about the choice of the target point $z.$ Since we have finitely many maps $f_{k}$ each of which with finitely many branches, we could choose the point $z$ on a set of full $m$ measure in such a way that it will not intersect the preimages of any order of any of the maps $f_1,\cdots, f_m.$ We should also remember that the statement on the convergence in distribution for the extreme value law should hold for $\mathbb{Q}$-almost all choice of $\omega$ defining the sample measure $\mu_{\omega}.$  This will be useful in the following {\em periodicity} considerations, which will allow us to choose $q=0$ in the conditions  $\D_q(u_{n,i})^*$ and $\D'_q(u_{n,i}^*)$  above. We begin to notice that three situations can occur:
\begin{itemize}
\item For a given $\omega$, the point $z$ will never come  back to itself, namely $f^k_{\omega}z\neq z, \forall k\ge 1.$
\item For a given $\omega$ there are finitely many blocks of periodicity, namely we have finitely many sequences of type $\omega_{i_1}\cdots \omega_{i_L}\in \omega$ for which $f_{\omega_{i_L}}\cdots f_{\omega_{i_1}}z=z.$
\item For a given $\omega$ there are countably  many blocks of periodicity like those described in the preceding item.
\end{itemize}
We  begin to observe that the set of the words with infinitely many blocks of periodicity has measure zero. We therefore treat now the words with finitely many blocks of periodicity, the  situation in the first item being included in that one.
Having fixed such an $\omega$, call $n_{\omega}$ the last time $f_{\omega}^{n_{\omega}}z=z.$ The proof follows now closely that in section 4.3.1 on the \cite{FFV17} paper to which we defer for the details. We now point out the main differences arising in our framework.
\begin{itemize}
\item First of all we use the quenched decay of correlations established in (\ref{buzzi}) applied to the same observable ${\bf 1}_{U_n}.$ This will produce two asymptotic terms $\mu_{\sigma^i\omega}(U_n)$ and $\mu_{\sigma^j\omega}(U_n), j>i,$ and the exponential error term containing the Lebesgue measure $m(U_n)$.
\item The measure of $\mu_{\sigma^i\omega}(U_n)$ will appear in a sum ranging from $1$ to $n$ and therefore it will converge to $\tau$ by (\ref{SEQ}).
    \item The other measure should be expressed in terms of the Lebesgue measure $m$ in order to compare it with the error term and to establish bounds from below and from above for the quantity $\mathcal{L}_{H,n}:=\max \{\mathcal{L}_{H,n,i}, i=1,\cdots, k_n\}.$ Thanks to (\ref{INF}) and (\ref{ESS}), we have that there exists two constants $c_1$ and $c_2$ such that for $\mathbb{Q}$-almost any $\omega \in \Omega$ we have that
        $$
        c_1 m(U_n)\le \mu_{\sigma^i\omega}(U_n)\le c_2 m(U_n), \ \forall i\ge 1.
        $$
        \item We now come to the main difference with the analogous proof in section 4.3.1 in \cite{FFV17}. We have to prove that if $f^i_{\omega}(x) \in U_n,$ then $f^j_{\omega}(x) \in U_n,$ for the next time with $j$ growing to infinity. We already put $n_{\omega}$ the last time $f_{\omega}^{n_{\omega}}z=z.$  If we now fix $J \in \mathbb{N},$ then $f_{\omega}^{n_{\omega}+k}z, \  k=1, \cdots, J,$ will never return to $z.$ Since we are composing finitely many maps, there will be an $\varepsilon>0$, such that $\forall \omega \in \Omega$ and $k=1, \cdots, J$ we have $\text{dist}(f_{\omega}^{n_{\omega+k}}z, z)>\varepsilon.$ Call $\overline{n}$ the integer such that $\text{diameter}(U_{\overline{n}})<\frac{\varepsilon}{4}\delta^{-J}$ and $U_{\overline{n}}$ does not intersect the preimages up to order $J$ of the family of maps $f_k, k=1,\cdots, J.$ If we now take $n>\max\{n_{\omega}, \overline{n}\},$ we have that  $\forall x\in U_n$
$\text{dist}(f_{\omega}^J x, z)>\frac{\varepsilon}{2}.$
\end{itemize}
We are left with the verification of conditions \ref{eq:EIH} and \ref{eq:multiplicity}. By (\ref{SEQ}) and the definition of $Q^{(0)}_{0,n,j}= \{\phi\circ f_{\omega}^j>u_n\},$ we see immediately that $\theta=1.$ The computation of \ref{eq:multiplicity} follows closely that in the proof of Theorem 3.A in \cite{FFM18}; we give the details for the type-1 observable $g=-\log x,$ for which $h=1.$  We are reduced to estimate the ratio $\frac{\mu_{\sigma^j}(X_0>u_n+x)}{\mu_{\sigma^j}(X_0>u_n)}$, where $X_0(\cdot)=-\log \text{dist}(\cdot, z)$ and $z$ is chosen $m$-almost everywhere. We have
$$
\frac{\mu_{\sigma^j}(X_0>u_n+x)}{\mu_{\sigma^j}(X_0>u_n)}=
\frac{m(B(z, e^{-u_n-x}))}{m(B(z, e^{-u_n}))}\frac{m(B(z, e^{-u_n}))\int_{B(z, e^{-u_n-x})}h_{\sigma^j\omega}dm}{m(B(z, e^{-u_n-x}))\int_{B(z, e^{-u_n})}h_{\sigma^j\omega}dm},
$$
where $B(z,v)$ denotes a ball of center $z$ and radius $v.$  In the limit of large $n$ the ratio on the right hand side of the preceding equality goes to $1$ by Lebesgue's differentiation theorem, while the first ratio on the left hand side goes to $e^{-x}.$  This gives the desired result with the probability distribution $\pi=1-e^{-x}.$ By generalizing we easily get the equivalent of Theorem \ref{T3} in our case
\begin{proposition}
For the random fibred system constructed above and having chosen the observable $\phi(x)=g(\text{dist}(x,z)),$ where $g$ has one of the three forms given in the statement of Theorem \ref{T3} and $z$ is chosen $m$-almost everywhere, the POT and AOT MREPP $a_nA_n$ both converge in distribution to a compound Poisson distribution process with intensity $\theta=1$ and multiplicity distribution 
\begin{equation}
\pi(x)=\begin{cases}
1-\e^{-x}, \text{when $g$ is of type 1 and $a_n=h(u_n)^{-1}$}\\
1-(1+x)^{-\beta}, \text{when $g$ is of type 2 and $a_n=u_n^{-1}$}\\
1-(1-x)^{\gamma}, \text{when $g$ is of type 3 and $a_n=(D-u_n)^{-1}$}
\end{cases}
\end{equation}
\end{proposition}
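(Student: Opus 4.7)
My plan is to verify each hypothesis of Theorem~\ref{thm:convergence} with $\p=\mu_\omega$ for $\mathbb Q$-a.e.\ $\omega$, taking $q=0$. The key observation is that we may choose the target point $z$ outside a measure-zero set: the union over all finite compositions of $f_1,\dots,f_m$ of their periodic points, together with the preimages of all their discontinuities. For such $z$ and for $\mathbb Q$-a.e.\ $\omega\in Y^{\mathbb Z}$, only finitely many ``periodic blocks'' $\omega_{i_1}\cdots\omega_{i_L}$ with $f_{\omega_{i_L}}\circ\cdots\circ f_{\omega_{i_1}}z=z$ occur in $\omega$; I denote by $n_\omega$ the last index where such a closure happens. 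With $q=0$ one has $Q^{(0)}_{0,n,j}=\{X_j>u_n\}$, so \eqref{SEQ} combined with the block estimates of Section~\ref{subsec:blocks} force $\theta=1$ in \eqref{eq:EIH}, and condition $U\!LC_0(u_{n,i})$ holds trivially.

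Condition $\D_0(u_{n,i})^*$ is an immediate consequence of the quenched decay of correlations \eqref{buzzi}: take $\phi=\mathbf 1_{A_{n,i}(x_1)}$ (whose $BV$ norm is uniformly bounded in $i,n$ for fixed $x_1$) and $\psi$ the indicator of $\bigcap_{j\ge 2}\{\aa_n(I_j-i-t)\le x_j\}$, which yields $\gamma_i(n,t)=\mathrm{const}\cdot\rho^t$; setting $t_n^*=(\log n)^2$ gives $t_n^*\bar F_{n,\max}(H)\to 0$ and $n\gamma_i(n,t_n^*)\to 0$ at once.

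The main technical step is $\D'_0(u_{n,i})^*$, which I handle by splitting the sum $\sum_{j}\sum_{r>j}\mu_\omega(X_j>u_n,X_r>u_n)$ at a cutoff $r-j\le J$ versus $r-j>J$. For the long-range part I apply \eqref{buzzi} again with $\phi=\psi=\mathbf 1_{U_n}$, obtaining a bound of the form $\mu_{\sigma^j\omega}(U_n)\mu_{\sigma^r\omega}(U_n)+K\rho^{r-j}m(U_n)$; the uniform bounds \eqref{ESS} and \eqref{INF} make each $\mu_{\sigma^i\omega}(U_n)$ comparable to $m(U_n)$, and summing with the block structure of Section~\ref{subsec:blocks} together with \eqref{SEQ} yields a vanishing contribution. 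For the short-range part I use the genericity of $z$: since there are only finitely many $Y$-words of length $J$, there exists $\varepsilon>0$ with $\dist(f_\omega^{n_\omega+k}z,z)>\varepsilon$ for every $1\le k\le J$ and every $\omega$, and choosing $n$ so large that $\mathrm{diam}(U_n)<\tfrac14\varepsilon\delta^{-J}$ and $U_n$ misses all order-$\le J$ preimages forces $f_\omega^k(U_n)\cap U_n=\emptyset$ for each such $k$. The main obstacle is precisely this uniform-in-$\omega$ short-range control, which relies essentially on the finiteness of the alphabet and on the generic choice of $z$.

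Finally, for \eqref{eq:multiplicity}, since $q=0$ the POT and AOT MREPPs collapse to the EOT case and the target ratio becomes
\[
\frac{\mu_{\sigma^j\omega}(X_0>u_n+x/a_n)}{\mu_{\sigma^j\omega}(X_0>u_n)}
=\frac{m(B(z,g^{-1}(u_n+x/a_n)))}{m(B(z,g^{-1}(u_n)))}\cdot R_{n,j}(\omega,x),
\]
where $R_{n,j}(\omega,x)$ denotes the ratio of averages of $h_{\sigma^j\omega}$ over the two concentric balls. Lebesgue's differentiation theorem, together with \eqref{ESS} and \eqref{INF}, makes $R_{n,j}(\omega,x)\to 1$ uniformly in $\omega$ for $m$-a.e.\ $z$, while the first factor converges to $1-\pi(x)$ with $\pi$ of the three listed types by \eqref{eq:def-g1}--\eqref{eq:def-g3} combined with the normalising sequences $a_n$ of \eqref{eq:POT-multiplicity}. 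Applying Theorem~\ref{thm:convergence} then delivers the claimed compound Poisson limit with $\theta=1$.
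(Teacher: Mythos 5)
Your proposal follows essentially the same route as the paper: take $q=0$, choose $z$ generically so that for $\mathbb Q$-a.e.\ $\omega$ only finitely many periodicity blocks occur, get $\D_0(u_{n,i})^*$ from the quenched decay of correlations \eqref{buzzi}, handle $\D'_0(u_{n,i})^*$ by the same short-range/long-range split with the $\varepsilon$, $\delta^{-J}$ and order-$\le J$ preimage argument, and obtain $\theta=1$ and the multiplicity law via Lebesgue differentiation. The argument is correct and matches the paper's proof in all essential steps.
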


\bibliographystyle{amsalpha}
\bibliography{/Users/jmfreita/GoogleDriveNoSpace/Bibliografia/Bibliografia}


\end{document}